\def\Lacts{\mathrel{\reflectbox{$\righttoleftarrow$}}}
\def\Racts{\mathrel{\reflectbox{$\lefttorightarrow$}}}
\newcommand{\wt}{\widetilde}
\newcommand{\wh}{\widehat}
\newcommand{\Hom}{\mathrm{Hom}}
\newcommand{\Mod}{\mathrm{-Mod}}
\newcommand{\lp}{\left(}
\newcommand{\rp}{\right)}
\newcommand{\lpp}{(\!(}
\newcommand{\rpp}{)\!)}
\newcommand{\lbb}{[\![}
\newcommand{\rbb}{]\!]}
\newcommand{\pd}{{\partial}}
\newcommand{\Ch}{\mathbb{C}\lbb \hbar\rbb}
\newcommand{\Chtensor}{\otimes_{\Ch}}
\newcommand{\C}{\mathbb C}
\newcommand{\R}{\mathbb R}
\newcommand{\Z}{\mathbb Z}
\newcommand{\E}{\mathbb E}
\newcommand{\fg}{\mathfrak{g}}
\newcommand{\fh}{\mathfrak{h}}
\newcommand{\fd}{\mathfrak{d}}
\newcommand{\CA}{{\mathcal A}}
\newcommand{\CD}{{\mathcal D}}
\newcommand{\CK}{{\mathcal K}}
\newcommand{\CL}{{\mathcal L}}
\newcommand{\CN}{{\mathcal N}}
\newcommand{\CO}{{\mathcal O}}
\newcommand{\CV}{{\mathcal V}}
\newcommand{\be}{\begin{equation}}
\newcommand{\ee}{\end{equation}}
\newcommand{\btik}{\begin{tikzcd}}
\newcommand{\etik}{\end{tikzcd}}
\begin{document}

\title{1-shifted Lie bialgebras and their quantizations}

\author[1]{Wenjun Niu}
\author[2]{Victor Py}

\affil[1]{Perimeter Institute for Theoretical Physics, 31 Caroline St N, Waterloo, ON N2L 2Y5, Canada.}

\affil[2]{School of Mathematics, University of Edinburgh, James Clerk Maxwell Building, Peter Guthrie Tait Rd., Edinburgh EH9 3FD, United Kingdom.}

\newtheorem{Def}{Definition}[section]
\newtheorem{Thm}[Def]{Theorem}
\newtheorem{Prop}[Def]{Proposition}
\newtheorem{Cor}[Def]{Corollary}
\newtheorem{Lem}[Def]{Lemma}

\theoremstyle{definition}
\newtheorem{Exp}[Def]{Example}
\newtheorem{Rem}[Def]{Remark}
\newtheorem{Asp}[Def]{Assumption}

\numberwithin{equation}{section}
\renewcommand{\Affilfont}{\mdseries \fontsize{9}{10.8} \itshape}
\maketitle

\abstract{In this paper, we define (cohomologically) 1-shifted Manin triples and 1-shifted Lie bialgebras, and study their properties. We derive many results that are parallel to those found in ordinary Lie bialgebras, including the double construction and the existence of a 1-shifted $r$-matrix satisfying the classical Yang-Baxter equation.  

Turning to quantization, we first construct a canonical quantization for each 1-shifted metric Lie algebra $\fg$, producing a deformation to the symmetric monoidal category of $\fg$ modules over a formal variable $\hbar$. This quantization is in terms of a curved differential graded algebra. Under a further technical assumption, we construct quantizations of transverse Lagrangian subalgebras of $\fg$, which form a pair of DG algebras connected by Koszul duality, and give rise to monoidal module categories of the quantized double.  

Finally, we apply this to Manin triples arising from Lie algebras of loop groups, and construct 1-shifted meromorphic $r$-matrices. The resulting quantizations are the cohomologically-shifted analogue of Yangians.}

\newpage

\tableofcontents

\newpage

\section{Introduction}

Given a Lie bialgebra $\fh$, it is well-known that one can construct its classical double $\fg=D(\fh)$ and the associated Manin-triple $(\fg, \fh, \fh^*)$. In \cite{etingof1996quantization}, Etingof and Kazhdan showed that any such Lie bialgebra admits a canonical quantization. The main ingredients in this construction are Drinfeld's quantization of $\fg$ as a quasi-triangular quasi-Hopf algebra \cite{drinfeld1991quasi}; and the strategy of \cite{kazhdan1993tensor}, which results in a fiber functor for Drinfeld's category. Moreover, the fiber functor constructed this way provides a polarization of the $R$-matrix, which leads to the Hopf subalgebra $U_\hbar (\fh)$. They further applied their results in \cite{etingof1998quantizationliebialgebrasiii, etingof1998quantizationliebialgebrasiv} to Lie bialgebras arising from loop groups, and constructed examples of quantum vertex algebras. 

On the other hand, in \cite{sBV}, the author defined and studied some properties of Lie bialgebras in homologically shifted settings. It is interesting to ask then whether these shifted Lie bialgebras admit quantizations, and what kind of algebraic structures they quantize to.  In this work, we focus on the case of \textit{1-shifted Lie bialgebras}. We will study the classical structure related to 1-shifted Lie bialgebras, and propose a construction of the quantization.

We will start with the definition and basic properties of 1-shifted Lie bialgebras. Although this is partially done in \cite{sBV}, we will start from scratch, partly because the analysis of \cite{sBV} is incomplete, and partly because we would like this paper to be self-contained. Instead of directly defining a 1-shifted Lie bialgebra, our starting point is its double. Namely, we consider a \textit{1-shifted metric Lie algebra}, which is a graded\footnote{Every vector space in this paper is $\Z$ graded and all structures respect this grading.} Lie algebra $\fg$ with a non-degenerate invariant bilinear form $\kappa$ of degree $1$, which we assume to be skew-symmetric for degree reasons, \textit{cf}. Section \ref{subsec:conventions}.  A Lagrangian Lie subalgebra of $\fg$ is a Lie subalgebra $\fh$ that is both isotropic and co-isotropic with respect to $\kappa$. We define a \textit{1-shifted Manin-triple} to be a 1-shifted metric Lie algebra $\fg$ together with two Lanrangian Lie subalgebras $\fh_\pm$ such that $\fg=\fh_+\oplus \fh_-$ (we call them transverse Lagrangian Lie subalgebras).  We study the induced co-bracket on $\fh_\pm$ from this set-up, which is ultimately the desired 1-shifted co-bracket. Our analysis, in particular Proposition \ref{Prop:delta}, leads to the following definition.

\begin{Def}[Definition \ref{Def:LBAmain}]\label{Def:LBAintro}

 A 1-shifted Lie bi-algebra is a Lie algebra $\fh$, equipped with a co-bracket of the form
 \be
 \delta: \fh\to \mathrm{Sym}^2(\fh)[1]
 \ee
satisfying
\be\label{eq:shiftedbialg}
\delta \otimes 1(\delta)+1\otimes \delta (\delta)=0\in \mathrm{Sym}^3(\fh), \qquad \delta([X,Y])=[\delta(X), \Delta(Y)]+(-1)^{|X|}[\Delta(X), \delta(Y)],
\ee
for all $X, Y\in\fh$. 
    
\end{Def}

We then  show that many results regarding Lie bialgebras and their doubles remain true in the shifted setting.

\begin{Thm}[Theorem \ref{Thm:Manindelta}, Proposition \ref{Prop:coboundaryr} \& Proposition \ref{Prop:1shiftedYB}]\label{Thm:Classintro}
    There is a one-to-one correspondence between 1-shifted Lie bialgebras and 1-shifted Manin-triples. For each 1-shifted Lie bialgebra $\fh$ and the associated Manin-triple $(\fg, \fh, \fh^*[-1])$, there exists a tensor (which we call the 1-shifted $r$-matrix)
    \be
\mathbf{r}\in \fh\otimes \fh^*[-1]
    \ee
    whose commutator induces the co-brackets on $\fh$ and $\fh^*[-1]$. Moreover, $\mathbf{r}$ satisfies classical Yang-Baxter equation. 
    
\end{Thm}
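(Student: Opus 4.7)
The plan is to handle the three assertions in turn, adapting Drinfeld's classical program to the $1$-shifted setting and carefully tracking the Koszul signs introduced by the degree shift. Starting from a $1$-shifted Manin triple $(\fg,\fh_+,\fh_-)$, the degree-$1$ metric $\kappa$ restricts to a non-degenerate pairing $\fh_+\otimes\fh_-\to\C[-1]$, giving an identification $\fh_-\simeq\fh_+^*[-1]$. Transporting the Lie bracket on $\fh_-$ across this isomorphism yields a map $\delta:\fh_+\to\fh_+\otimes\fh_+$ of degree $1$; the shift converts the graded antisymmetry of $[-,-]_{\fh_-}$ into graded symmetry of $\delta$, so $\delta$ lands in $\mathrm{Sym}^2(\fh_+)[1]$ as required by Definition \ref{Def:LBAintro}. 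Co-Jacobi is dual to Jacobi for $\fh_-$, and the cocycle condition in \eqref{eq:shiftedbialg} comes from Jacobi in $\fg$ applied to two elements of $\fh_+$ and one of $\fh_-$, after using invariance of $\kappa$ to pass between mixed brackets and their $\fh_+$-projections. Conversely, from $(\fh,\delta)$ I would build $\fg:=\fh\oplus\fh^*[-1]$, equipped with the canonical evaluation pairing and the unique bracket extending those on $\fh$ and (dually) on $\fh^*[-1]$ that makes $\kappa$ invariant; Jacobi in $\fg$ then decomposes precisely into the two identities of \eqref{eq:shiftedbialg}.

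For the existence of the coboundary $r$-matrix (Proposition \ref{Prop:coboundaryr}), I would fix dual bases $\{e_i\}\subset\fh$ and $\{e^i\}\subset\fh^*[-1]$ with respect to $\kappa$ and set
\[
\mathbf{r}\ :=\ \sum_i e_i\otimes e^i\ \in\ \fh\otimes\fh^*[-1].
\]
Basis-independence is immediate from non-degeneracy of $\kappa$. The claim then reduces to verifying $\delta(X)=[X\otimes 1+1\otimes X,\,\mathbf{r}]$ for $X\in\fh$, and the analogue for $X\in\fh^*[-1]$; pairing both sides with test elements via $\kappa$ reduces each identity to the description of the bracket on $\fh^*[-1]$ extracted from $\delta$ in the first step.

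For the $1$-shifted CYBE (Proposition \ref{Prop:1shiftedYB}), the goal is to show
\[
[\mathbf{r}_{12},\mathbf{r}_{13}]+[\mathbf{r}_{12},\mathbf{r}_{23}]+[\mathbf{r}_{13},\mathbf{r}_{23}]\ =\ 0 \quad\text{in } \fg^{\otimes 3}.
\]
Decompose $\mathbf{r}$ into its graded-antisymmetric and graded-symmetric parts; the symmetric part, with the shift-corrected sign, is the Casimir tensor $\Omega$ of $\kappa$ and is therefore $\fg$-invariant. Invariance of $\Omega$ cancels the cross terms containing $\Omega$, and the remaining purely antisymmetric combination can be rewritten, via the coboundary formula of the previous paragraph, in terms of $\delta\otimes 1(\delta)$ and $1\otimes\delta(\delta)$, which sum to zero by the co-Jacobi identity of \eqref{eq:shiftedbialg}. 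I expect the main obstacle throughout to be sign bookkeeping: dualization across the degree shift, swapping of shifted tensor factors, and (anti)symmetrization all introduce Koszul signs differing from the unshifted case, and both the symmetry type of $\delta$ and the precise form of the CYBE depend sensitively on these conventions. Once the conventions of Section \ref{subsec:conventions} are applied consistently, the structural arguments parallel Drinfeld's classical theory.
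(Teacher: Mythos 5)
Your treatment of the first two assertions follows the paper's route: the identification $\fh_-\simeq\fh_+^*[-1]$ via $\kappa$, the transport of brackets, the reconstruction of $\fg=\fh\oplus\fh^*[-1]$, and the canonical tensor $\mathbf{r}=\sum_i e_i\otimes e^i$ verified by pairing against test elements are all exactly what the paper does (Proposition \ref{Prop:delta}, Theorem \ref{Thm:Manindelta}, Proposition \ref{Prop:coboundaryr}). One point you pass over too quickly in the converse direction: the Jacobi identity in $\fg$ for two elements of $\fh$ and one of $\fh^*[-1]$ requires the cocycle identity for the \emph{dual} cobracket $\delta^*$ on $\fh^*[-1]$, and the paper spends a nontrivial structure-constant computation (equations \eqref{eq:deltaabc} and \eqref{eq:deltastarabc}) showing this follows from the cocycle identity for $\delta$; this is checkable but not automatic, and your outline should acknowledge it.

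The genuine gap is in your argument for the Yang--Baxter equation. Decomposing $\mathbf{r}$ into its $\sigma$-symmetric and $\sigma$-antisymmetric parts and invoking invariance of the Casimir together with co-Jacobi is the classical argument that, for a coboundary cobracket $\delta=[-\mathbf{r},\Delta]$, the tensor $[\mathbf{r}^{12},\mathbf{r}^{13}]+[\mathbf{r}^{12},\mathbf{r}^{23}]+[\mathbf{r}^{13},\mathbf{r}^{23}]$ is \emph{ad-invariant} --- not that it vanishes. Concretely, the cross terms involving $\Omega$ and the term $[\Omega^{13},\Omega^{23}]$ (which is the generally nonzero invariant $3$-tensor) do not cancel against each other; they combine with the purely antisymmetric contribution into an invariant element, and co-Jacobi gives you nothing beyond that invariance. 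To get actual vanishing you must use the specific form of $\mathbf{r}$ as the canonical element of the pairing between the two transverse Lagrangians. The paper does this by proving the sharper identity $\delta_{\fg}\otimes 1(\mathbf{r})=[\mathbf{r}^{13},\mathbf{r}^{23}]$ directly, pairing the left factor against $X\otimes Y$ with $X,Y\in\fh^*[-1]$ so that both sides reduce to $(-1)^{|X|+|Y|}[X,Y]$, and then expanding the same quantity via the coboundary formula as $-[\mathbf{r}^{12},\mathbf{r}^{13}]-[\mathbf{r}^{12},\mathbf{r}^{23}]$; equating the two expressions yields \eqref{eq:1shiftedYB}. You should replace your symmetric/antisymmetric decomposition step with a computation of this kind (or an equivalent structure-constant verification).
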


 Our next goal is to propose a quantization of 1-shifted Lie bialgebras, which is guided by some physics intuitions to be discussed in Sections \ref{subsec:TQFT} and \ref{subsec:HTQFT}. For each 1-shifted Manin-triple $(\fg, \fh_\pm)$, we will first define a canonical quantization of the double $\fg$. This quantization will be in terms of a \textit{curved differential graded algebra} (CDGA for short), and produces a monoidal category deforming the symmetric monoidal category $U(\fg)\Mod$. More specifically, we prove the following result.

 \begin{Thm}[Section \ref{subsec:quantdouble}]
     Let $\fg$ be a finite-dimensional 1-shifted metric Lie algebra. The following statements are true. 

     \begin{itemize}
     
         \item There exists a central element $W\in U(\fg)$ and an anti-symmetric invariant tensor
         \be
\Omega\in \fg\otimes \fg, \qquad \sigma\Omega=-\Omega,
         \ee
        both of which are canonically associated to $\fg$ and the bilinear form. 

         \item The CDGA $(U(\fg)\lbb\hbar\rbb, \hbar^2 W)$ is a co-algebra object in the category of CDGAs, whose coproduct is given by $(\Delta, \hbar\Omega)$, where $\Delta$ is the symmetric coproduct of $\fg$. In particular, $(U(\fg)\lbb\hbar\rbb, \hbar^2 W)\Mod$ is a monoidal category. 
         
     \end{itemize}
 \end{Thm}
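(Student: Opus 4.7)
The plan is to construct $\Omega$ and $W$ explicitly, then verify the CDGA coalgebra axioms of the pair $(\Delta,\hbar\Omega)$.

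I would first define $\Omega$ as the canonical tensor dual to $\kappa$. Picking a homogeneous basis $\{x_i\}$ of $\fg$ and the $\kappa$-dual basis $\{x^i\}$ (so $\kappa(x_i,x^j)=\delta_i^j$), set $\Omega=\sum_i x_i\otimes x^i\in\fg\otimes\fg$. The anti-symmetry $\sigma\Omega=-\Omega$ is immediate from the skew-symmetry of $\kappa$, and $\fg$-invariance under the adjoint action follows from invariance of $\kappa$; these properties imply $\Omega$ is basis-independent. To construct $W$, I work backwards from what the CDGA morphism axioms force. Since both source and target carry zero differential, the curving compatibility of $(\Delta,\hbar\Omega)$ as a CDGA morphism reduces to
\be
\Delta(W)-W\otimes 1-1\otimes W=\Omega^2 \quad \text{in } U(\fg)\otimes U(\fg),
\ee
where $\Omega^2$ is computed in the tensor-product algebra. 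I would take $W$ to be an explicit element of $U(\fg)$ (a shifted Casimir, roughly quadratic in the generators), chosen so that this coboundary identity holds. Centrality of $W$ then follows from the $\fg$-invariance of $\Omega$ by a direct Leibniz-rule computation, and the canonical nature of the construction can be read off from the fact that both $\Omega$ and the resulting equation for $W$ depend only on $\kappa$ and the bracket of $\fg$.

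Next, I would verify coassociativity. At the level of CDGA morphism data, composition $(g,\beta)\circ(f,\alpha)=(g\circ f,\,g(\alpha)+\beta)$ yields, on either side, iterated coproduct data with algebra part $(\Delta\otimes 1)\circ\Delta=(1\otimes\Delta)\circ\Delta$ and curving part $\hbar(\Omega_{12}+\Omega_{13}+\Omega_{23})$, the latter because $\fg\subset U(\fg)$ is primitive, so $(\Delta\otimes 1)(\Omega)=\Omega_{13}+\Omega_{23}$ and $(1\otimes\Delta)(\Omega)=\Omega_{12}+\Omega_{13}$. Both contributions therefore agree. The remaining non-trivial content is the compatibility of the iterated coproduct with the target curvature $\hbar^2(W\otimes 1\otimes 1+1\otimes W\otimes 1+1\otimes 1\otimes W)$, which after expanding $(\Omega_{12}+\Omega_{13}+\Omega_{23})^2$ and using the coboundary identity above reduces to the anticommutator relations
\be
\{\Omega_{12},\Omega_{13}+\Omega_{23}\}=0=\{\Omega_{12}+\Omega_{13},\Omega_{23}\}.
\ee
These are precisely the 1-shifted classical Yang--Baxter relations on $\Omega$, established in Proposition \ref{Prop:1shiftedYB} applied to the Manin triple in which $\fg$ is the double. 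Together with the curving compatibility, this secures that $(\Delta,\hbar\Omega)$ is a coassociative coproduct in the CDGA category.

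The main obstacle is the explicit construction of $W$ and the verification of the coboundary identity $\Delta(W)-W\otimes 1-1\otimes W=\Omega^2$: one must produce a specific element of $U(\fg)$ whose primitive deviation matches $\Omega^2$, which is a delicate graded PBW computation that also needs the anti-symmetry of $\Omega$ and the Jacobi identity to ensure both the identity and centrality. Once this is in place, the counit and the induced monoidal structure on $(U(\fg)\lbb\hbar\rbb,\hbar^2W)\Mod$ follow from the standard construction of tensor products of modules over a coalgebra object in CDGAs, with associator supplied by the verified coassociativity.
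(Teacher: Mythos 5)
You have correctly identified the canonical tensor $\Omega$ (the quadratic Casimir of $\kappa$, anti-symmetric and invariant) and, more importantly, the exact identity that $W$ must satisfy, namely $\Delta(W)-W\otimes 1-1\otimes W=\Omega^2$. But the proposal stops precisely where the theorem's content begins: you never actually produce $W$, and you acknowledge this yourself by listing "the explicit construction of $W$ and the verification of the coboundary identity" as the main obstacle. Existence of an element with that primitive deviation is not automatic, and your guess that it is "roughly quadratic in the generators" is off: in the paper $W$ lies in $\mathrm{Sym}^3(\fg)$. The paper's route is the missing idea. One chooses a pair of transverse Lagrangians (a canonical choice always exists, $\fh_+=\bigoplus_{n\le 0}\fg_n$, $\fh_-=\bigoplus_{n\ge 1}\fg_n$), forms the 1-shifted $r$-matrix $\mathbf{r}$ and the element $\rho=-\tfrac12(\nabla\mathbf{r}+\nabla\sigma\mathbf{r})$, so that the coboundary cobracket gives an inner differential $d_{\mathbf r}=[\hbar\rho,-]$. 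Then $\tfrac12[\rho,\rho]$ decomposes as $c_{\mathbf r}-W_{\mathbf r}$ with $c_{\mathbf r}\in\fg$ and $W_{\mathbf r}\in\mathrm{Sym}^3(\fg)$; centrality of $W_{\mathbf r}$ follows from the co-Jacobi identity of the cobracket, and the coboundary identity $\Delta W_{\mathbf r}=W_{\mathbf r}\otimes 1+1\otimes W_{\mathbf r}+\Omega^2$ follows from the key computation $\hbar(d_{\mathbf r}\otimes 1+1\otimes d_{\mathbf r})(\mathbf r)=\hbar^2[\mathbf r,\mathbf r]$, itself a consequence of the classical Yang--Baxter equation for $\mathbf r$. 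Independence of the Lagrangians is then deduced from the coboundary identity together with $W_{\mathbf r}\in\mathrm{Sym}^3(\fg)$ (the difference of two candidates is primitive, hence in $\fg$, hence zero) --- note that your uniqueness argument ("the equation depends only on $\kappa$") is also incomplete, since the coboundary identity alone determines $W$ only up to a primitive element.

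Two smaller remarks. First, your extra verification of "compatibility of the iterated coproduct with the target curvature" is redundant: once $\CD$ is a valid CDGA morphism, so is $\CD\otimes 1$, and composites of valid morphisms are valid, so coassociativity reduces to the equality of the two composite pairs, exactly as in the paper (using $\Delta\otimes 1(\Omega)=\Omega^{13}+\Omega^{23}$ and $1\otimes\Delta(\Omega)=\Omega^{12}+\Omega^{13}$). Second, the relations $[\Omega^{12},\Omega^{13}+\Omega^{23}]=0$ you invoke are true, but they follow from invariance of $\Omega$ (i.e., $[\Omega,\Delta(x)]=0$), not from Proposition \ref{Prop:1shiftedYB}, which concerns $\mathbf r$ rather than $\Omega=\mathbf r-\sigma\mathbf r$.
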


 Turning to quantization of Lagrangians, we will impose the condition that $\fg_2=0$ in Assumption \ref{Asp:g2=0}. With this condition, we show that $U(\fh_\pm)$ naturally deform to DG algebras over $\C\lbb\hbar\rbb$, and their categories admit monoidal actions of $(U(\fg)\lbb\hbar\rbb, \hbar^2 W)\Mod$.  

\begin{Thm}[Section \ref{subsec:quantization}]\label{Thm:quantizeintro}
   Assume that $\fg_2=0$. The following are true. 

    \begin{itemize}
        \item There are DG algebras $U_\hbar (\fh_\pm)$ over the formal ring $\Ch$, whose limit at $\hbar=0$ give $U(\fh_\pm)$. 

        \item Let $U_\hbar (\hbar\fh_\pm)$ the DG subalgebra of $U_\hbar (\fh_\pm)$ generated by $\hbar\fh_\pm$. The limit of $U_\hbar (\hbar\fh_\pm)$ at $\hbar=0$ give the Chevalley-Eilenberg cochain complex of $\fh_\mp$. 

        \item The DG algebras $U_\hbar (\fh_+)$ and $U_\hbar (\hbar\fh_-)$ are Koszul dual of each other. 

        \item $U_\hbar (\fh_\pm)\Mod$ are monoidal module categories of $(U(\fg)\lbb\hbar\rbb, \hbar^2 W)\Mod$. 

    \end{itemize}
    
\end{Thm}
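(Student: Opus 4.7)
The plan is to propagate the quantization of the double through the Lagrangian splitting $\fg=\fh_+\oplus\fh_-$. Use $\kappa$ to identify $\fh_\mp\cong\fh_\pm^*[-1]$ and decompose the antisymmetric invariant tensor as $\Omega=r-\sigma(r)$ with $r\in\fh_+\otimes\fh_-$; this is precisely the 1-shifted $r$-matrix of Theorem \ref{Thm:Classintro}. To construct $U_\hbar(\fh_+)$, take $U(\fh_+)\lbb\hbar\rbb$ with its PBW product as the underlying graded algebra, and equip it with a differential $d_+$ obtained by bracketing with $\hbar r$ inside $U(\fg)\lbb\hbar\rbb$ and then projecting back to $U(\fh_+)$ along the Lagrangian decomposition. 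By construction $d_+$ is a derivation; $d_+^2=0$ then follows from the centrality of $W$ (the CDGA condition on the double) together with the classical Yang-Baxter equation for $r$. The construction of $U_\hbar(\fh_-)$ is parallel, using $\fh_-\otimes\fh_+$.

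For the $\hbar=0$ limit of $U_\hbar(\hbar\fh_+)$, rescaling generators by $\hbar$ and passing to $\hbar\to 0$ turns the PBW product into a symmetric-algebra product on the shifted space, while simultaneously letting the first-order $\hbar$-correction of $d_+$ survive; this residual differential encodes the Lie bracket of $\fh_-$ through the pairing $\fh_-^*[-1]\cong\fh_+$, producing exactly $C^*(\fh_-)=(\Sym(\fh_-^*[-1]),d_{CE})$. For Koszul duality, the pair $(U_\hbar(\fh_+), U_\hbar(\hbar\fh_-))$ deforms the classical Koszul pair $(U(\fh_+), C^*(\fh_+))$; standard bar and cobar constructions over $\Ch$ then provide quasi-inverse functors, and $\Ch$-flatness propagates the classical Koszul duality to the full deformation.

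For the monoidal action, given $M\in U_\hbar(\fh_+)\Mod$ and $N\in (U(\fg)\lbb\hbar\rbb,\hbar^2W)\Mod$, define the $U_\hbar(\fh_+)$-action on $M\otimes N$ via the inclusion $\fh_+\hookrightarrow\fg$ together with the coproduct $(\Delta,\hbar\Omega)$; the cross-term $\hbar\Omega$ supplies the non-trivial $r$-matrix coupling between $M$ and $N$. Compatibility of this action with $d_+$ on $M$ and with $\hbar^2W$ on $N$ follows from the coalgebra axioms for the double together with the observation that the same $r$-matrix data underlies both $d_+$ and $\hbar\Omega$.

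The main technical obstacle is showing that $d_\pm^2=0$ and that $d_\pm$ is a derivation --- these are precisely what make $U_\hbar(\fh_\pm)$ genuine DG algebras rather than merely curved ones. A naive restriction of the $W$-twisted structure on $\fg$ to a Lagrangian leaves a residual curvature, and the assumption $\fg_2=0$ is invoked specifically to kill the obstructive degree-$2$ component of that curvature in $\Sym^2(\fh_\pm)$. Once this DG structure is in place, Koszul duality and compatibility of the monoidal action reduce to classical facts deformed in a flat-over-$\Ch$ manner.
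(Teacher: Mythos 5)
Your overall strategy is the paper's: restrict the quantized double to the Lagrangians, let the $1$-shifted $r$-matrix drive both the Koszul duality and the monoidal action, and use $\fg_2=0$ to kill the curvature obstruction. But several steps as you state them have genuine gaps. First, the ``project back to $U(\fh_+)$ along the Lagrangian decomposition'' is both unnecessary and dangerous: a projection of a derivation is not a derivation, so your claim that $d_+$ is ``by construction'' a derivation would not survive a literal reading. The correct point is that no projection is needed --- by Proposition \ref{Prop:delta} the restriction of $\delta_\fg=[-\mathbf{r},\Delta]$ to $\fh_\pm$ is the cobracket $\pm\delta_{\fh_\pm}$ valued in $\mathrm{Sym}^2(\fh_\pm)$, so $d_\mathbf{r}=[-\hbar\nabla\mathbf{r},-]$ already preserves $U(\fh_\pm)\lbb\hbar\rbb$ and $U_\hbar(\fh_\pm)$ is literally a DG subalgebra of $U_\hbar(\fg)$. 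Relatedly, you misplace the obstruction: $\tfrac12[\rho,\rho]$ decomposes as $c_\mathbf{r}-W_\mathbf{r}$ with $c_\mathbf{r}\in\fg$ and $W_\mathbf{r}\in\mathrm{Sym}^3(\fg)$ central (the $\mathrm{Sym}^2$ component vanishes automatically, cf.\ Remark \ref{Rem:d2odd}); the assumption $\fg_2=0$ kills $c_\mathbf{r}\in\fg_2$, not a component ``in $\mathrm{Sym}^2(\fh_\pm)$''.

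Second, your Koszul duality argument is missing its key ingredient. ``Standard bar--cobar constructions plus flatness'' does not by itself identify $U_\hbar(\hbar\fh_-)$ as the Koszul dual of $U_\hbar(\fh_+)$: you must exhibit a specific twisting morphism relating the two. The paper takes $\alpha=-2\hbar\mathbf{r}$, which is Maurer--Cartan in $U_\hbar(\fh_+)\Chtensor U_\hbar(\hbar\fh_-)$ by the $1$-shifted Yang--Baxter equation, views it as an element of $\mathrm{Tw}\bigl(\mathrm{Sym}(\fh_+[1])\lbb\hbar\rbb,\,U_\hbar(\fh_+)\bigr)$ via the graded duality between $U_\hbar(\hbar\fh_-)$ and $\mathrm{Sym}(\fh_+[1])\lbb\hbar\rbb$, and obtains the coalgebra map into $B(U_\hbar(\fh_+))$ from the universal property; only then does the $\hbar$-adic spectral sequence (reducing to the classical Chevalley--Eilenberg resolution at the associated graded) prove it is a quasi-isomorphism. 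Without this polarization of $d_\mathbf{r}$ the duality statement is not proved. Finally, for the monoidal action your choice of MC element $\hbar\Omega$ does not restrict correctly: the curvatures fail to match if you couple the curved object $(U(\fg)\lbb\hbar\rbb,\hbar^2W)$ directly to the flat DG algebra $U_\hbar(\fh_+)$. The paper first passes to the isomorphic CDGA $(U(\fg)\lbb\hbar\rbb,d_\mathbf{r},\hbar^2c_\mathbf{r})$ via $(\mathrm{Id},\hbar\rho)$ and then uses $\CD_\mathbf{r}^\pm=(\Delta,-2\hbar\mathbf{r})$, which lands in $U_\hbar(\fh_+)\Chtensor U_\hbar(\fg)$ precisely because $\mathbf{r}\in\fh_+\otimes\fh_-$; the identity $\Delta d_\mathbf{r}=(d_\mathbf{r}\otimes1+1\otimes d_\mathbf{r}-2\hbar[\mathbf{r},-])\Delta$ of Theorem \ref{Thm:dr=r2} is what makes this a CDGA morphism, and it is not a formal consequence of ``the same $r$-matrix data underlying both''.
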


We note here that unlike the case of ordinary Lie bialgebras, the 1-shifted $r$-matrix $\mathbf{r}$ serves as a Maurer-Cartan element in the tensor product $U (\fg)\otimes U (\fg)$, and its polarization leads to the monoidal module structure of $U_\hbar (\fh_\pm)\Mod$. The Maurer-Cartan equation of $\mathbf{r}$ is a consequence of the Yang-Baxter equation. The natural interpretation of this quantization in terms of physical TQFTs is given in Section \ref{subsec:TQFT}. We also give a short discussion for the case when $\fg_2\ne 0$ in Section \ref{subsec:failasp}. 

Finally, we turn to examples of 1-shifted Lie bialgebras coming from loop groups, and combine our result with factorization structure, as in \cite{etingof1998quantizationliebialgebrasiii}. For $\fg$ a simple Lie algebra, and a difference-dependent tensor
\be
r(t_1-t_2)=\frac{C}{t_1-t_2}+g(t_1-t_2), \qquad g(t_1-t_2)\in \fg\lbb t_1\rbb\otimes \fg \lbb t_2\rbb
\ee
satisfying classical generalized Yang-Baxter equation (see Proposition \ref{Prop:splitr}), we produce a 1-shifted Lie bialgebra structure on $\fd (\CO)$ where $\fd=T^*[-1]\fg$ and $\CO=\C\lbb t\rbb$, as well as the associated quantization $U_\hbar (\fd (\CO))$. This DG algebra carries a differential $T=\pd_t$, with which one can define translation automorphism $\tau_z=e^{zT}$ for a formal variable $z$. By translating and re-expanding the 1-shifted $r$-matrix $\mathbf{r}(t_1-t_2)$, we obtain a tensor
\be
\mathbf{r}(t_1+z-t_2)\in \fd (\CO)\otimes \fd (\CO)\lbb z^\pm\rbb,
\ee
which we call the \textit{1-shifted meromorphic $r$-matrix}. This tensor allows us to prove the following result. 

\begin{Thm}[Section \ref{subsec:1shiftedmeror}]\label{Thm:1shiftedmerointro}
    Let $M^i$ be finitely-generated smooth modules of $U_\hbar (\fd (\CO))$ flat over $\Ch$ (which will be called FSF modules), and $z_i$ formal coordinates, there exists a DG module over $\C\lbb z_i\rbb [(z_i-z_j)^{-1}]$ of the form
    \be
\bigotimes \{M^i, z_i\}=(\bigotimes_i M^i)\lbb z_i\rbb[(z_i-z_j)^{-1}]
    \ee
    whose differential is given by
    \be
d_{\mathbf{r}}(z):= \sum d_{M^i}-2\hbar \sum_{i<j} \mathbf{r}^{ij}(t_1+z-t_2)\cdot -,
    \ee
    and whose action of $U_\hbar (\fd (\CO))$ is given by $\prod_i \tau_{z_i} \Delta^n$, where $\Delta$ is the symmetric coproduct of $U(\fd (\CO))$. 
\end{Thm}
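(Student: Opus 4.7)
The plan is to verify, in order, three properties: (i) the formula for $d_{\mathbf{r}}(z)$ defines a degree $+1$ operator valued in $\C\lbb z_i\rbb[(z_i-z_j)^{-1}]$; (ii) $d_{\mathbf{r}}(z)^2 = 0$; and (iii) $d_{\mathbf{r}}(z)$ commutes with the action $\prod_i \tau_{z_i}\Delta^n$ of $U_\hbar(\fd(\CO))$.

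For (i), the FSF hypothesis says that for each $v^i \in M^i$ there is $N$ with $t^N\fd\lbb t\rbb\cdot v^i = 0$. Writing $\mathbf{r}(u) = C/u + g(u)$ with $g \in \fd\lbb t_1\rbb \otimes \fd\lbb t_2\rbb$ regular, the substitution $u = t_1 + z_i - z_j - t_2$ produces a Laurent series in $(z_i - z_j)$ whose only negative powers come from the pole $C/u$. At each fixed order in $\hbar$ and in $(z_i - z_j)$, only finitely many monomials in $t_1, t_2$ act nontrivially on any given pair $(v^i, v^j)$ by smoothness, so the sum lands in the claimed localization.

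The main obstacle is (ii). Using $(\sum_k d_{M^k})^2 = 0$, the square is
\[
d_{\mathbf{r}}(z)^2 = -2\hbar\bigl[\textstyle\sum_k d_{M^k},\, \sum_{i<j} \mathbf{r}^{ij}\bigr] + 4\hbar^2 \sum_{\substack{i<j \\ k<l}} \mathbf{r}^{ij}(t_1 + z_i - z_j - t_2)\, \mathbf{r}^{kl}(t_1 + z_k - z_l - t_2).
\]
The commutator with the internal differentials computes $(d_{\fd(\CO)}^{(i)} + d_{\fd(\CO)}^{(j)})\mathbf{r}^{ij}$, which is the ``$d\mathbf{r}$'' part of the Maurer-Cartan equation guaranteed by Theorem \ref{Thm:Classintro}. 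The quadratic term splits by index incidence: disjoint pairs $\{i,j\} \cap \{k,l\} = \emptyset$ cancel after using that the symmetric component $\mathbf{r} + \sigma\mathbf{r} = \hbar\Omega$ is $\fg$-invariant; pairs sharing one index $i < j < k$ assemble into the classical generalized Yang-Baxter combination
\[
[\mathbf{r}^{ij}, \mathbf{r}^{ik}] + [\mathbf{r}^{ij}, \mathbf{r}^{jk}] + [\mathbf{r}^{ik}, \mathbf{r}^{jk}]
\]
of Proposition \ref{Prop:splitr}, evaluated at the translated arguments. Adding these reproduces the full Maurer-Cartan equation for $\mathbf{r}$ emphasized after Theorem \ref{Thm:quantizeintro}, so the square vanishes. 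The delicate point is that the formal translations $t_a \mapsto t_a + z_i - z_j$ must commute with the Yang-Baxter identity, which is guaranteed by the difference-dependence $r = r(t_1 - t_2)$.

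For (iii), translation invariance of $\mathbf{r}(t_1 - t_2)$ under the diagonal shift $(t_1, t_2) \mapsto (t_1 + z, t_2 + z)$, combined with $\fd$-invariance of $\mathbf{r}$, implies that the commutator of $\sum_k \tau_{z_k}(x)^{(k)}$ with any $\mathbf{r}^{ij}(t_1 + z_i - z_j - t_2)$ vanishes for $x \in \fd(\CO)$. Compatibility with the curvature term $\hbar^2 W$ uses centrality of $W$ in $U(\fd)$ from the construction of the quantized double. Extending by the Leibniz rule across $\Delta^n$ yields $U_\hbar(\fd(\CO))$-linearity of $d_{\mathbf{r}}(z)$, completing the construction of the DG module.
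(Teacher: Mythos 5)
Your overall skeleton for (ii) --- a Maurer--Cartan identity for each pair, the Yang--Baxter combination for triples sharing an index, and cancellation of disjoint pairs --- matches the algebra underlying the paper's proof, but the step you yourself flag as ``delicate'' is where essentially all of the work in the paper's Proposition \ref{Prop:tensorz} lives, and ``difference-dependence'' alone does not close it. The identities $d_{\mathbf{r}}\otimes 1(\mathbf{r})+1\otimes d_{\mathbf{r}}(\mathbf{r})=\hbar[\mathbf{r},\mathbf{r}]$ and the classical Yang--Baxter equation are established for $\mathbf{r}(t_1-t_2)$ viewed in $\fd(r)\wh\otimes\fd(\CO)$, i.e.\ expanded in the region $|t_1|>|t_2|$; after translating by $z$ and re-expanding where $|z|>|t_1|,|t_2|$, the first tensor slot lands in $\fd(\CO)\lbb z^\pm\rbb$, and one must check that applying $d_{\mathbf{r}}$ \emph{after} re-expansion agrees with re-expanding $d_{\mathbf{r}}\otimes 1(\mathbf{r})$. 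This is not automatic: products and commutators of Laurent series are compatible with a re-expansion map only when the factors live in a common region. The paper proves $(\iota_{z>t}d_{\mathbf{r}})\otimes 1(\mathbf{r})=(d_{\mathbf{r}}\iota_{z>t})\otimes 1(\mathbf{r})$ by passing through a third expansion region $|t_1|<|t_2-z|$, using that the maps $\iota\circ\tau_{-z}$ are Lie algebra homomorphisms and invoking the 1-shifted Yang--Baxter equation there. You need an argument of this kind; asserting that translation ``commutes with the Yang--Baxter identity'' begs the question.

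Your step (iii) also rests on a false claim: $\mathbf{r}$ is \emph{not} invariant under the diagonal action, so $[\Delta_z(x),\mathbf{r}^{ij}(t_1+z_i-z_j-t_2)]$ does not vanish. Only the combination $\Omega=\mathbf{r}-\sigma\mathbf{r}$ satisfies $[\Omega,\Delta(x)]=0$; by Proposition \ref{Prop:coboundaryr} one has $[-\mathbf{r},\Delta(x)]=\delta_\fg(x)\neq 0$ in general. The correct mechanism is the intertwining identity $\Delta d_{\mathbf{r}}=(d_{\mathbf{r}}\otimes 1+1\otimes d_{\mathbf{r}}-2\hbar[\mathbf{r},-])\Delta$ of Theorem \ref{Thm:dr=r2}: the $-2\hbar\mathbf{r}$ term in $d_{\mathbf{r}}(z)$ exactly compensates the failure of the symmetric coproduct to commute with $d_{\mathbf{r}}$, and what one verifies is the DG-module Leibniz rule $[d_{\mathbf{r}}(z),\Delta_z(a)]=\Delta_z(d_{\mathbf{r}}a)$ (again after a re-expansion argument as above), not vanishing of a commutator. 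Relatedly: in the paper $\Omega=\mathbf{r}-\sigma\mathbf{r}$ with no factor of $\hbar$, not $\mathbf{r}+\sigma\mathbf{r}=\hbar\Omega$; the disjoint-pair cancellation is just graded anticommutativity of odd operators acting on disjoint tensor factors and needs no invariance of $\Omega$; and there is no curvature term to worry about here, since $\fd(\CK)$ is concentrated in degrees $0,1$ so $c_{\mathbf{r}}=0$ automatically.
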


The translated 1-shifted $r$-matrix $\mathbf{r}(t_1+z-t_2)$ serves the same role as definining a MC element on the tensor product of modules, inducing the $z$-dependent tensor products. We show in Section \ref{subsec:affineKacr} that when $r$ is assumed to be skew-symmetric, then one can add a level structure to $U_\hbar (\fd (\CO))$ to obtain a further deformation $U_\hbar^k (\fd (\CO))$ for $k\in \C$, and Theorem \ref{Thm:1shiftedmerointro} with the same 1-shifted meromorphic $r$-matrix holds for $U_\hbar^k (\fd (\CO))$. Moreover, in Section \ref{subsec:summarymero}, we comment that for any Lie algebra $\fg$ and $\fd=T^*[-1]\fg$, one can apply the proof of Theorem \ref{Thm:1shiftedmerointro} to the 1-shifted Lie bialgebra structure on $\fd (\CO)$ induced by the 1-shifted analogue of Yang's $r$-matrix. We call the associated DG algebras \textit{DG 1-shfited Yangians}, and denote them by  ${}_1\!Y_\hbar^k (\fd)$.\footnote{$k$ is present when there is an ordinary bi-linear form on $\fd$ to define the level. The subscript $1$ indicates that it is 1-shifted Yangian instead of the ordinary Yangian.} The natural physical interpretation of these structures is given in Section \ref{subsec:HTQFT}. 

\begin{Rem}
   The reason we call this algebra ``DG 1-shfited Yangian" is because the term ``shifted Yangians" already has a specific meaning. 
\end{Rem}

\subsection{Physical interpretation: 2d bulk-boundary TQFTs}\label{subsec:TQFT}

Extended defects play a central role in the study of topological quantum field theories (TQFT). For example, line operators in a 3 dimensional TQFT organize themselves into a braided tensor category. The prototypical example is Wilson lines in Chern-Simons theories \cite{witten1989quantum}. In finite semi-simple setting, the braided tensor category is enough to determine the state spaces associated to 2 manifolds and partition functions on 3-manifolds, via the Reshtikin-Turaev construction \cite{reshetikhin1991invariants}. 

The aforementioned Chern-Simons theory is also closely related to quantizations of Lie bialgebra structures. Roughly speaking, a Lie algebra $\fg$ with a non-degenerate bilinear pairing gives rise to a 3 dimensional Chern-Simons theory. Perturbatively, the category of line operators of this theory admits a description as $U(\fg)\Mod$, whose braided tensor structure comes from the quasi-Hopf construction of Drinfeld \cite{drinfeld1991quasi}. Quantum groups as defined by \cite{drinfeld1986quantum} are associated to monoidal fiber functors of this category. The idea of \cite{etingof1996quantization} is that a pair of transverse Lagrangian subalgebras of $\fg$ give rise to a monoidal fiber functor for Drinfeld's category and leads to quantizations of Lie bialgebras. Recently, in the work of the first author with T. Dimofte \cite{DNtanaka}, Etingof-Kazhdan's construction is given a physical interpretation as constructing transverse boundary conditions of the Chern-Simons theory from transverse Lagrangian subalgebras of $\fg$. 

The structure of 1-shifted Lie bialgebras and their doubles can be given a similar physical interpretation, this time as a bulk-boundary 2d TQFT. Indeed, it is noted in \cite{sBV} that a 1-shifted Lie bialgebra structure on $\fh$ corresponds to a Poisson structure on $\mathrm{CE}^*(\fh)$,  the Chevalley-Eilenberg cochain complex of $\fh$. In a similar vein, the 1-shifted metric on $\fg$ corresponds to a 1-shifted symplectic form on $\mathrm{CE}^*(\fg)$, in which $\mathrm{CE}^*(\fh)$ is a Lagrangian. This is the classical set-up of a bulk-boundary 2d TQFT. The relavent 2d TQFT should be a deformation of the so-called 2d topological BF theory, or a B twist of a 2d $\CN=(2,2)$ super Yang-Mills theory \cite[12.1.3]{elliott2022taxonomy}. Very roughly, any Lie algebra $\fh$ gives rise to such a 2d BF theory, and a 1-shifted Lie bialgebra structure gives rise to a compatible deformation of this theory by a potential of the form
\be
W=\hbar \sum_a \delta (B_a)A^a, \qquad \delta: \fh\to \fh\otimes \fh [-1]. 
\ee
Classically, the Lie algebra $\fh$ and $\fh^*[-1]$ should give rise to two boundary conditions that are transverse to each other. 

We conjecture that our proposed monoidal category $(U(\fg)\lbb\hbar\rbb, \hbar^2 W)\Mod$ is the category of interfaces of this 2d TQFT. When trying to quantize the boundary conditions defined by $\fh_\pm$, we found a potential obstruction in $\fg_2$, which leads to our assumption $\fg_2=0$. With this assumption, we conjecture that the DG algebras $U_\hbar (\fh)$ and $U_\hbar (\fh^*[-1])$ are local operators on the boundary conditions specified by $\fh$ and $\fh^*[-1]$. From this perspective, the monoidal actions of Theorem \ref{Thm:quantizeintro} is natural: interfaces form a monoidal category that naturally acts on the category of boundary conditions. 

In \cite{pantev2013shifted, calaque2017shifted}, the authors defined quantization of shifted Poisson algebras, based on formality of $\E_n$ operads. We expect that one can compare their quantization with the one we proposed here, but it is beyond the scope of the current paper. 

\subsection{3d HT theory and DG-shifted Yangians}\label{subsec:HTQFT}

One major motivation of this work, especially the formulation of Theorem \ref{Thm:1shiftedmerointro}, stems from the authors' collaboration with T. Dimofte \cite{lineope}, studying the algebraic structure of line operators in 3-dimensional holomorphic-topological (HT) theories. 

3-dimensional holomorphic-topological theories are quantum field theories that are holomorphic in 2 spacetime dimensions and topological in the remaining dimension. Locally, it requires the geometry of the spacetime to be of the form $\C\times \R$. Recently, much work has been devoted to the study of algebraic and geometric aspects of these theories, for example, \cite{aganagic2017topologicalchernsimonsmattertheories, dimofte2018dual, costello2023boundary, jockers20203d, ferrari2024boundary} and many more. 

In \cite{lineope}, we studied the HT twist of a 3d $\CN=2$ supersymmetric gauge theory. A 3d $\CN=2$ gauge theory is defined by a gauge group $G$ and a representation $V$ of $G$, and can be deformed by further including a superpotential. This theory admits a holomorphic-topological twist, which is the main focus of \textit{loc.cit}. We will only consider the HT twist of gauge theories with no superpotential in the following discussions. We particularly focused on the category of line operators $\CL$. Objects in this category are line operators along $\R$, and morphisms are given by local operators at the junction between two line operators. By inserting line operators at various points of $\C$, one expects that $\CL$ has the structure of a \textit{chiral} category. Our work was born out of an effort to try to understand the chiral structure as explicitly as possible. 

The meaning of being ``explicit" could depend on one's taste or on how one accesses this category $\CL$ mathematically. For example, one could access $\CL$ using boundary chiral algebras of \cite{costello2023boundary}, in a way much similar to Witten's approach to line operators of topological Chern-Simons theories \cite{witten1989quantum}. In this case, one defines
\be
\CL:= \CV\Mod
\ee
for the boundary vertex algebra $\CV$ on a holomorphic boundary condition. Under this identification, the chiral structure of $\CL$ should come from the chiral structure of $\CV$. In this sense, the chiral structure is simply a coherent sheaf of categories
$\CL_{\Sigma^n}$ over products of a complex curve $\Sigma$ satisfying the factorization properties of Beilinson-Drinfeld \cite{beilinson2004chiral}. For example, on $\Sigma^2$, this means that there are equivalences of categories
\be
\CL_{\Sigma^2}\Big\vert_{\Delta}\simeq \CL_\Sigma,\qquad \CL_{\Sigma^2}\Big\vert_{\Sigma^2\setminus \Delta}\simeq  \CL_\Sigma\boxtimes \CL_\Sigma \Big\vert_{\Sigma^2\setminus \Delta}. 
\ee
In some cases, such as loop spaces \cite{kapranov2004vertex} and VOAs \cite{beilinson2004chiral}, the chiral structure can be written very explicitly. However, this structure does not answer what happens when we consider the operator product expansion (OPE)
\be
\lim_{z\to w} L_1(z)L_2(w)=?, \qquad L_1, L_2\in \CL, z, w\in \Sigma. 
\ee
This is because unlike chiral algebras, there is no six-functor formalism or a notion of short exact sequences of categories, which is important to go from chiral algebras to vertex algebras and OPE. 

In \cite{lineope}, we derived this OPE via calculations of Feymann diagrams, using the idea of Koszul duality of Costello-Paquette \cite{costello2021twisted}. What we found in the end can be summarized as follows. There exists a DG algebra $A^!$ (signifying that it is the Koszul dual of the algebra of local operators $A$), with which we define
\be
\CL:= A^!\Mod. 
\ee
The algebra $A^!$ for the case of pure gauge theories was also derived in \cite{Paquette:2021cij}. We found that the OPE structure is governed by the following data.

\begin{enumerate}

    \item A Maurer-Cartan (MC) element $\alpha\in A^!\otimes A^!\lpp z^{-1}\rpp$ for a formal variable $z$. 

    \item An algebra homomorphism $\Delta: A^!\to A^!\otimes_\alpha A^!\lpp z^{-1}\rpp$, where the RHS is twisted by the MC element $\alpha$.

    \item Weak associativity and commutativity in terms of expansion of formal series. 
    
\end{enumerate}

For a pure gauge theory, the underlying algebra of $A^!$ is simply $U(T^*[-1]\fg[t])$, and the algebra homomorphism $\Delta$ turns out to be the symmetric coproduct. The differential is computed from Feymann diagrams. The element $\alpha$ takes the form:
\be
\alpha= \frac{x_a\otimes \epsilon^a-\epsilon^a\otimes x_a}{t_1+z-t_2},
\ee
and can be interpreted as the propagator of the theory. The above structures allow us to start with two objects in $\CL$, and produce a tensor product, which is an object in $\CL$ that depends meromorphically on a formal variable $z$. 

Two immediate questions arise. The first question is how this structure compares with the chiral structure of $\CV\Mod$. We have provided a conjectural answer to this in \cite{lineope}. The algebra $A^!$ is Koszul dual to the perturbative boundary VOA, which is simply the affine Kac-Moody algebra at some level. The formation of OPE is directly comparable to the fusion product algorithm of Kazhdan-Lusztig \cite{kazhdan1993tensor} and Gaberdiel \cite{gaberdiel1994fusion}. In fact, one can say that the algebra $A^!$ is the endomorphism of the fiber functor given by taking conformal block on $\mathbb P^1$ with a vacuum at $\infty$. Under this identification, the above tensor structure underlies the relation between OPE and conformal blocks. 

The second question is more fundamental, namely, whether there is a systematic explanation of the above structure. It was rather miraculous that $\alpha$ would satisfy the MC equation, let alone the fact that deforming the differential by $\alpha$ makes $\Delta$ a map of DG algebras. Of course, these must be true if one believes in the locality of the HT theory. We verified these statements in \cite{lineope} by direct calculations. 

In this paper, we explain this structure in terms of 1-shifted Lie bialgebras and their quantizations. In fact, we show that this phenomenon, namely that a monoidal structure on $\CA \Mod$ can arise from a coproduct from $\CA$ to $\CA\otimes_\alpha \CA$, is quite universal. Any 1-shifted Lie bialgebra and its double provide an example. The MC element is nothing but the 1-shifted $r$-matrix of the double. The algebra $A^!$ above is precisely the quantization of the 1-shifted Lie bialgebra $T^*[-1]\fg [t]$ (whose 1-shifted Lie bialgebra structure is induced by the vacuum at $\infty$), and the meromorphic MC element comes from re-expansion of the 1-shifted $r$-matrix after translation by $z$. This sufficiently justifies our calling $A^!$ the DG 1-shifted Yangian in \cite{lineope}, since it is the quantization of 1-shifted Lie bialgebra structure associated to Yang's $r$-matrix. 

In this sense, we actually achieved more. We show in Section \ref{sec:meromorphic} that there are different 1-shifted Lie bialgebra structures on $(T^*[-1]\fg)(\CO)$ induced by a classical difference-dependent $r$-matrix. By the work of Belavin and Drinfeld \cite{BDrmatrix}, such $r$-matrices arise from sheaf of Lie algebras over curves of genus $\leq 1$. Most notable examples are the trignometric and elliptic solutions. We expect that the meromorphic tensor products we obtained in Section \ref{sec:meromorphic} from these $r$-matrices can be extended to sheaves over corresponding curves, and are related to conformal blocks on those curves.

\subsection{Structure of the paper and future directions}

\subsubsection*{Structure of the paper}

\begin{itemize}

    \item In Section \ref{sec:shiftedlba}, we propose the definition of 1-shifted Lie bialgebras and discuss their properties. 

    \item In Section \ref{sec:quantdouble}, we propose a quantization for a 1-shifted metric Lie algebra, and for 1-shifted Lie bialgebras under the assumption \ref{Asp:g2=0}. 

    \item In Section \ref{sec:meromorphic}, we consider 1-shifted Lie bialgebra structures associated with loop groups, and obtain DG 1-shifted Yangians. We construct meromorphic tensor products from modules of these DG algebras.

\end{itemize}

\subsubsection*{Future directions}

\begin{itemize}

    \item The calculation of \cite{lineope} suggests that the statements in this work should be true for $L_\infty$ algebras. We would like to consider such generalizations. 

    \item The solutions $r(t_1-t_2)$ we used in this work are related to the geometry of complex curves of genus $\leq 1$. For higher genus curves, one can construct solutions $r(t_1, t_2, \lambda)$ that satisfy \textit{dynamical} Yang-Baxter equations \cite{abedin2024rmatrixstructurehitchinsystems}. In \cite{abedin2024quantumgroupoids}, a skew-symmetrization of this dynamical $r$-matrix was quantized.  It would be interesting to explore a similar generalization of the 1-shifted Lie bialgebra structure to Lie algebroids. If this is possible, the meromorphic tensor products for these dynamical $r$-matrices should be related to conformal blocks over higher genus curves. 

    \item It was hinted in \cite{DNtanaka} that relative double construction of Hopf algebras is related to transverse interfaces of 3d TQFTs as supposed to boundary conditions. Following this vein, it would be interesting to develop a relative notion of 1-shifted double. 

    \item Since all the meromorphic tensor products arise from the same HT theory, we expect that over products of $\mathbb D^\times$, different $r$-matrices result in the same meromorphic tensor structure, and must be related to each other via some twist. We would like to consider how they can be made twist-equivalent. 

    \item The magic of the 1-shifted case is that one can view $\fh_-$ as generators of the Chevalley-Eilenberg complex of $\fh_+$. It would be interesting to study $n$-shifted Lie bialgebras and their quantizations. These structures should have natural appearances in TQFTs of various dimensions, and should be related to the quantization of \cite{calaque2017shifted} by considering the induced structure on the Chevalley-Eilenberg complex.

\end{itemize}

\subsection{Conventions}\label{subsec:conventions}

\subsubsection*{Convetion on gradings}

In this paper, all Lie algebras and algebras are objects in the category of $\mathbb Z$-graded (or in full generality, DG) vector spaces. Since computations are very sensitive to this grading, we fix our convention as follows. 

Given an $\Z$-graded (we will simply call this graded, and mostly omit this since this is always part of the definition) vector space $V$, we denote by $F$ the grading map, namely $F: V\to V$ such that $F\vert_{V_n}=n\mathrm{Id}_{V_n}$. For $v\in V$, we will denote by $|v|$ the grading under $F$. Let $V$ and $W$ be to graded vector spaces, then the braiding isomorphism
\be
\sigma: V\otimes W\to W\otimes V
\ee
is given by $(-1)^{F\otimes F}\tau$ where $\tau(v\otimes w)=w\otimes v$. Namely $(v\otimes w)^{op}=(-1)^{|v||w|}w\otimes v$. Given two graded maps $f: V_1\to W_1$ and $g: V_2\to W_2$ of degree $|f|, |g|$, the tensor product $f\otimes g$ is the map such that
\be
f\otimes g(v_1\otimes v_2)=(-1)^{|g||v_1|} f(v_1)\otimes g(v_2). 
\ee

A commutative algebra $A$ is an algebra such that $m\circ \sigma=m$ where $m: A\otimes A\to A$ is multiplication. In terms of elements in $A$, this amounts to
\be
a\cdot b=(-1)^{|a||b|}b\cdot a.
\ee 

Let $\fg$ be a graded Lie algebra, namely it is a graded vector space together with a grading-perserving bracket $[-,-]: \fg\otimes \fg\to \fg$ satisfying graded anti-commutativity and Jacobi identity. When $\fg$ is graded finite-dimensional, this is equivalent to a map:
\be
\delta: \fg^*[-1]\to \mathrm{Sym}^2(\fg^*[-1])[1]
\ee
that extends to a square-zero differential of the commutative algebra $\mathrm{Sym}^*(\fg^*[-1])$ (the Chevalley-Eilenberg differential). If we choose a set of graded-basis $\{x_a\}$ of $\fg$ under which the structure constant is $f_{ab}^c$, and let $\{\epsilon^a\}$ be the corresponding dual elements in $\fg^*[-1]$ (namely $\langle \epsilon^a, x_b\rangle=\delta^a_b$), then $\delta$ is given by
\be
\delta (\epsilon^a)=\sum_{b\ne c}(-1)^{|b|(|c|+1)} f_{bc}^a\epsilon^b\otimes \epsilon^c. 
\ee
This can also be defined by
\be
\langle \delta(\epsilon^a), x_b\otimes x_c\rangle=\langle \epsilon^a, [x_b, x_c]\rangle. 
\ee

If $\fg$ is a Lie algebra, then the opposite $\fg^{op}$ is defined so that the bracket is given by $[-,-]\circ \sigma$. In other words, given two elements $X,Y\in \fg$, the opposite Lie bracket is given by
\be
[X, Y]_{\fg^{op}}=(-1)^{|X||Y|}[Y, X]_{\fg}. 
\ee
Define $S:\fg \to \fg$ by $S(X)=-X$, then it is naturally a Lie algebra homomorphism $S:\fg\to \fg^{op}$. It therefore induces a map of algebras $S: U(\fg)\to U(\fg^{op})=U(\fg)^{op}$ such that $S^2=1$. 

Let $V$ be a left $\fg$ module, then $V$ can be given a right $\fg$ module structure via $S$. We can thus make $V^*$ a left $\fg$ module again. Explicitly, let $X\in \fg, f\in V^*$ and $v\in V$, we have
\be
X\cdot f (v)=f(v\cdot X)=-(-1)^{|v||X|}f(X\cdot v). 
\ee
This gives an action of $\fg$ on $\fg^*[-1]$, which is given on the generators by
\be
x_a\cdot \epsilon^b=f_{ca}^b\epsilon^c. 
\ee
A graded map $f: V\to W$ of $\fg$-modules is a map such that
\be
Xf=(-1)^{|f||X|}fX.
\ee
For instance, the map $\delta$ is a degree 1 map of $\fg$ modules, in that it satisfies:
\be
\delta(X\cdot \epsilon)=(-1)^{|X|}X\cdot \delta(\epsilon). 
\ee

\subsubsection*{Invariant bilinear forms}

We would like to comment on the definition of non-degenerate bi-linear forms we impose on $\fg$, since this might cause confusions in comparison to \cite{sBV}. 

Let $\kappa$ be a degree $0$ non-degenerate invariant bi-linear form on $\fg$, then it satisfies:
\be
\kappa ([z, x], y)=\kappa (x, [y, z])\iff \kappa ([z, x], y)=(-1)^{|x|}\kappa (z, [x,y]). 
\ee
This induces an isomorphism of $\fg$ modules $\kappa: \fg\to \fg^*$. In this case, it is natural to require $\kappa$ to be symmetric, since
\be
\kappa ([z, x], y)=\kappa (x, [y, z])=(-1)^{|y|}\kappa ([x, y], z)=(-1)^{|y|}\kappa (y, [z, x]),
\ee
namely if $[-,-]$ is surjective, then $\kappa$ is forced to be symmetric. 

On the other hand, if we have some non-degenerate invariant bi-linear form $\kappa$ of degree $1$ in the sense that it induces an isomorphism $\kappa: \fg\to \fg^*[-1]$, then a choice can be made in whether it is symmetric or anti-symmetric. Indeed, from our convention, it is natural to assume
\be\label{eq:1xkappa}
\kappa ([x, y], z)=(-1)^{|x|}\kappa (y, [z, x]). 
\ee
If we apply an argument similar to the degree 0 case, we find an equality:
\be
\kappa ([x, y], z)=(-1)^{|x|+|y|+|z|+|z|(|x|+|y|)}\kappa (z, [x, y]).
\ee
However, using the fact that $\kappa$ is of degree $1$, we find that the above is non-zero only when $|x|+|y|+|z|=1$, in which case $|z|(|x|+|y|)=|z|(|z|+1)=0$, and we find
\be
\kappa ([x, y], z)=-\kappa (z, [x, y]).
\ee
Therefore, if $[-,-]$ is surjective, then $\kappa$ is anti-symmetric in the sense that $\kappa \circ \sigma=-\kappa$. This is the convention we will use, in contrast to the symmetric form of \cite{sBV}. We note that one can in fact build a symmetric form $\wt\kappa$ from 
$\kappa$ by defining
\be
\wt \kappa(x, y)=(-1)^{|x|}\kappa (x, y). 
\ee
Ultimately, the conventions we use and those of \cite{sBV} are equivalent via this re-definition. However, we think that it is more natural to work with the anti-symmetric bilinear form $\kappa$, since the 1-shifted $r$-matrix is the tensor associated to $\kappa$ rather than $\wt\kappa$. 

\subsection*{Acknowledgements}

We are particularly grateful to Tudor Dimofte for collaborations on projects that lead to this work, and would like to thank as well Kevin Costello and Davide Gaiotto for useful discussions related to this topic. W.N. is also very grateful to Raschid Abedin for explaining the relation between Lie algebra splittings of $\fg (\CK)$ and classical $r$-matrices, and to Lukas M\"uller for providing many helpful comments. Finally, W.N. is grateful to his friend Don Manuel for many encouragements. W.N.'s research is supported by Perimeter Institute for Theoretical Physics. Research at Perimeter Institute is supported in part by the Government of Canada through the Department of Innovation, Science and Economic Development Canada and by the Province of Ontario through the Ministry of Colleges and Universities.

\section{1-shifted Lie bi-algebras}\label{sec:shiftedlba}

\subsection{1-shifted Metric Lie algebras and Manin-triples}

Let $\fg$ be a finite-dimensional graded Lie algebra. We say that $\fg$ is a \textit{1-shifted} metric Lie algebra if it comes equipped with a degree $1$ anti-symmetric invariant non-degenerate bi-linear form $\kappa$ of degree $1$, namely a map:
\be\label{eq:kappa} 
\kappa: \fg\otimes (\fg[1])\to \C,
\ee
such that for all $x,y,z\in \fg$, the following hold:
\be
\kappa (x\otimes y)=-(-1)^{|x||y|}\kappa (y\otimes x), \qquad \kappa ([x, y]\otimes z)=(-1)^{|x|}\kappa (y\otimes [z, x])=0.
\ee
Note that since $\kappa (x\otimes y)\ne 0$ only if $|x|=|y|+1$ (as $\kappa$ is in degree $1$), the above can be re-written as
\be\label{eq:kappacommute}
\kappa (x\otimes y)=-\kappa (y\otimes x), \qquad \kappa ([y, x]\otimes z)=(-1)^{|x|} \kappa (y\otimes [x, z]), \qquad \forall x,y, z \in \fg.
\ee 
Moreover, non-degeneracy means that $\kappa$ induces an isomorphism $\fg\cong \fg^*[-1]$ as $\fg$ modules, sending $y$ to $\kappa (y\otimes -)$. 

An \textit{isotropic} Lie sub-algebra of a 1-shifted matrix Lie algebra $\fg$ is a Lie sub-algebra $\fh\subseteq \fg$ such that $\kappa (\fh, \fh)=0$. Or in other words, if we define $\fh^\perp$ to be the subspace of $\fg$ that pairs trivially with $\fh$ under $\kappa$, then $\fh$ is isotropic iff $\fh^\perp\supseteq \fh$. A \textit{co-isotropic} Lie sub-algebra of $\fg$ is a Lie sub-algebra $\fh$ such that $\fh^\perp\subseteq \fh$.   A Langrangian sub-algebra of $\fg$ is a Lie subalgebra $\fh$ such that $\fh=\fh^\perp$. 

\begin{Def}

A \textbf{1-shifted Manin-triple} is a triple $(\fg, \fh_+, \fh_-)$, where $\fg$ is a 1-shifted metric Lie algebra and $\fh_\pm$ are Lagrangian Lie subalgebras such that $\fg=\fh_+\oplus \fh_-$. We call $\{\fh_\pm\}$ transverse Lagrangian Lie subalgebras of $\fg$. 

\end{Def}

It is clear that if one is given a 1-shifted Manin-triple $(\fg, \fh_+, \fh_-)$, then there are natural identifications of graded vector spaces:
\be
\fh_+=\fh_-^*[-1],\qquad \fh_-=\fh_+^*[-1].
\ee
In particular, the second identification means that we have a map
\be
\delta: \fh_+\to \mathrm{Sym}^2(\fh_+)[1],
\ee
representing the Lie bracket of $\fh_-$. In other words,
\be
\kappa\otimes \kappa (\delta (X))(Y\otimes Y')=\kappa (X)([Y,Y']), \qquad X\in \fh_+, Y,Y'\in \fh_-. 
\ee
Moreover, using the decomposition $\fg=\fh_+\oplus \fh_-$, we see that there is a left action of $\fh_+$ on $\fh_-$ which we denote by $\rhd$, and a right action of $\fh_-$ on $\fh_+$, which we denote by $\lhd$, such that
\be
[X,Y]=X\rhd Y+X\lhd Y,\qquad X\in \fh_+, Y\in \fh_-.
\ee

\begin{Lem}\label{Lem:kappadual}
    Under the equivalence $\fh_-=\fh_+^*[-1]$ induced by $\kappa$, the action $\rhd$ coincides with the co-adjoint action of $\fh_+$ on $\fh_+^*[-1]$. 
    
\end{Lem}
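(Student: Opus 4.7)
The plan is to unwind the identification induced by $\kappa$ and verify $\fh_+$-equivariance by a direct computation. Under $\fh_-\cong \fh_+^*[-1]$, an element $Y\in\fh_-$ corresponds to the functional $\kappa(Y,-)\vert_{\fh_+}$, so the statement reduces to the identity
\[\kappa(X\rhd Y, X') = (X\cdot \kappa(Y,-))(X')\]
for all $X,X'\in \fh_+$ and $Y\in \fh_-$, where the right-hand side is the coadjoint action formula from Section \ref{subsec:conventions}.

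First, I would decompose $[X,Y] = X\rhd Y + X\lhd Y$ with $X\rhd Y\in \fh_-$ and $X\lhd Y\in\fh_+$, then pair with $X'\in\fh_+$ under $\kappa$. Because $\fh_+$ is isotropic, the $\lhd$-term vanishes: $\kappa(X\lhd Y, X') = 0$. Hence $\kappa(X\rhd Y, X') = \kappa([X,Y], X')$. This is the crucial step where the Lagrangian hypothesis enters, and it is what makes the $\rhd$-action readable off the full bracket via $\kappa$.

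Next, I would apply the invariance identity \eqref{eq:1xkappa} (or equivalently \eqref{eq:kappacommute}) for the degree $1$ form $\kappa$ to rewrite $\kappa([X,Y], X')$ as a Koszul-signed multiple of $\kappa(Y, [X, X'])$. On the other hand, the coadjoint formula $(X\cdot f)(v) = -(-1)^{|X||v|}f([X,v])$, applied to $f = \kappa(Y,-)$ and $v = X'$, produces the same combination of $\kappa(Y,[X,X'])$, up to a sign.

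The delicate part, which I expect to be the main obstacle, is the sign bookkeeping: the invariance of the odd bilinear form and the coadjoint formula each introduce Koszul signs that must cancel against each other, and also against the signs coming from the antisymmetry of the bracket on $\fh_+$. The cleanest way to verify the match is to pick dual bases $\{x_a\}\subset \fh_+$ and $\{y_a\}\subset\fh_-$ with $\kappa(y_a,x_b) = \delta_{ab}$, and to check that both $x_a\rhd y_b$ and the coadjoint action $x_a\cdot \epsilon^b$ are given by the same combination of the structure constants $f_{ab}^c$ of $\fh_+$, matching the expression $x_a\cdot \epsilon^b = f_{ca}^b\epsilon^c$ already recalled in Section \ref{subsec:conventions}.
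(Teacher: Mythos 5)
Your strategy is the same as the paper's: pair with $X'\in\fh_+$, use isotropy of $\fh_+$ to discard the $\lhd$-component so that $\kappa(X\rhd Y, X')=\kappa([X,Y],X')$, then apply invariance of $\kappa$ and compare with the coadjoint formula. The one genuine problem is the target identity you set up at the outset, $\kappa(X\rhd Y, X') = (X\cdot \kappa(Y,-))(X')$, with no sign. Carrying the computation out in the paper's conventions, the coadjoint side gives $(X\cdot\kappa(Y))(X')=\kappa(Y)([X',X])=\kappa(Y,[X',X])$, while invariance (equation \eqref{eq:kappacommute}) gives $\kappa(X\rhd Y,X')=\kappa([X,Y],X')=(-1)^{|X|}\kappa(Y,[X',X])$. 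So the two sides agree only up to $(-1)^{|X|}$: the correct statement is $X\cdot\kappa(Y)=(-1)^{|X|}\kappa(X\rhd Y)$, i.e.\ $\kappa$ intertwines $\rhd$ with the coadjoint action as a degree-$1$ map of $\fh_+$-modules in the sense of Section \ref{subsec:conventions}, not as a strict (degree-$0$) intertwiner. Since $\kappa(Y,[X',X])$ is nonzero in degrees where $|X|$ can be odd, your unsigned identity is genuinely false there, and the basis check you propose would not close.

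This sign is not cosmetic: it is precisely the factor ``multiplied by $(-1)^{|*|}$'' inserted into the definition of $\rhd$ and $\lhd$ in the proof of Theorem \ref{Thm:Manindelta}, which cites this lemma for exactly that purpose. You correctly identify the sign bookkeeping as the delicate point, but the resolution is not that the Koszul signs cancel to give a strict equality; it is that a residual $(-1)^{|X|}$ survives and must be recorded in the statement of the intertwining.
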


\begin{proof}
We need to show that
\be
X\cdot \kappa (Y)=(-1)^{|X|} \kappa (X\rhd Y).
\ee
Take an element $X'\in \fh_+$, pairing both sides with it, the LHS is equal to
\be
\kappa (Y) ([X', X])=\kappa (Y, [X', X]),
\ee
whereas the RHS is equal to (since $\fh_+$ is Lagrangian)
\be
(-1)^{|X|}\kappa ([X, Y], X')=(-1)^{|X|}(-1)^{|X|}\kappa (Y, [X', X]).
\ee
This completes the proof. 

\end{proof}

\begin{Prop}\label{Prop:delta}
The bracket $\delta$ satisfies the following two equations:
\be
\delta \otimes 1(\delta)+1\otimes \delta (\delta)=0\in \mathrm{Sym}^3(\fh), \qquad \delta([X,Y])=[\delta(X), \Delta(Y)]+(-1)^{|X|}[\Delta(X), \delta(Y)].
\ee
In the above $\Delta$ is the symmetric co-product of $\fh_+$, and conjugation with $\delta(X)$ means its action on $\mathrm{Sym}^2(\fh_+)$.

\end{Prop}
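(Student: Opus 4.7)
The strategy is to verify both identities by pairing with test elements in $\fh_-^{\otimes n}$ via $\kappa^{\otimes n}$ and invoking non-degeneracy of $\kappa$: an identity in $\mathrm{Sym}^n(\fh_+)$ holds if and only if all such pairings vanish, the graded symmetry built into $\mathrm{Sym}^n$ being automatic from the definition of $\delta$.

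For the co-Jacobi identity, I would pair $(\delta\otimes 1)(\delta(X)) + (1\otimes\delta)(\delta(X))$ against a triple $Z_1\otimes Z_2\otimes Z_3\in\fh_-^{\otimes 3}$. Applying the defining relation $(\kappa\otimes\kappa)(\delta(X))(Z_1\otimes Z_2) = \kappa(X,[Z_1,Z_2])$ twice reduces the pairing to $\kappa(X,J)$, where $J$ is the Jacobiator of the $\fh_-$-bracket evaluated on $Z_1,Z_2,Z_3$. Since $\fh_-\subset\fg$ is a Lie subalgebra and $\fg$ satisfies the graded Jacobi identity, $J=0$, and non-degeneracy of $\kappa$ yields the co-Jacobi identity.

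For the co-Leibniz identity, I would pair both sides against $Z_1\otimes Z_2\in\fh_-^{\otimes 2}$. The LHS becomes $\kappa([X,Y],[Z_1,Z_2])$, which by repeated use of invariance of $\kappa$ may be rewritten, up to Koszul signs, as $\kappa(Y,[[Z_1,Z_2],X])$. Applying Jacobi in $\fg$, the inner expression expands into nested brackets $[[X,Z_i],Z_j]$, in which each mixed bracket $[X,Z_i]$ decomposes under the splitting $\fg=\fh_+\oplus\fh_-$ as $X\rhd Z_i + X\lhd Z_i$. For the RHS, write $\delta(X)=\sum X_{(1)}\otimes X_{(2)}$ and expand $[\delta(X),\Delta(Y)]$ and $[\Delta(X),\delta(Y)]$ termwise, each term acting on a single tensor slot via the $\fh_+$-bracket; each pairing with $Z_1\otimes Z_2$ is converted by invariance of $\kappa$ and Lemma \ref{Lem:kappadual} into an expression of precisely the same form as the terms produced from the LHS. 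Matching the $\rhd$- and $\lhd$-contributions on both sides then yields the desired equality.

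The main obstacle is the systematic management of Koszul signs. Because $\kappa$ has degree $+1$ and is anti-symmetric in the sense of \eqref{eq:kappacommute}, every transposition of tensor factors and every application of invariance introduces a non-trivial sign; verifying that these combine exactly into the prefactor $(-1)^{|X|}$ in the co-Leibniz identity requires patient tracking against the grading conventions of Section \ref{subsec:conventions}, but once the bookkeeping is set up the argument is essentially mechanical.
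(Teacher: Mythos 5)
Your proposal is correct and follows essentially the same route as the paper: the co-Jacobi identity is reduced to the Jacobi identity of $\fh_-$ via the duality $\fh_-\cong\fh_+^*[-1]$, and the co-Leibniz identity is obtained by pairing with $Z\otimes W\in\fh_-^{\otimes 2}$, invoking invariance of $\kappa$, the Jacobi identity of $\fg$, and the decomposition of mixed brackets into $\rhd$- and $\lhd$-components (the paper's equation \eqref{eq:ZXY}), with Lemma \ref{Lem:kappadual} converting the resulting terms back into $[\delta(X),\Delta(Y)]$ and $(-1)^{|X|}[\Delta(X),\delta(Y)]$. The sign bookkeeping you defer is indeed mechanical; the paper carries it out using the constraint $|X|+|Y|+|Z|+|W|=1$ imposed by the degree of $\kappa$, which collapses most of the Koszul signs.
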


\begin{proof}

The first equation is simply the Jacobi identity of $\fh_-$, and in fact is saying that $ \mathrm{Sym}^*(\fh_+)$ can be naturally turned into the Chevalley-Eilenberg complex of $\fh_-$. We show that the second equation follows from the Jacobi identity of $\fg$. Indeed, let $Z, W\in \fh_-$, and $X\in \fh_+$. We have the following Jacobi identity:
\be
(-1)^{|Z||X|}[[Z,W], X]+(-1)^{|Z||W|}[[W, X], Z]+(-1)^{|W||X|}[[X, Z], W]=0
\ee
If we quotient by $\fh_+$ onto the sub-space $\fh_-$, we obtain the following identity:
\be\label{eq:ZXY}
X\rhd [Z, W]=[X\rhd Z, W]+(X\lhd Z)\rhd W-(-1)^{|Z||W|} \lp [X\rhd W, Z]+(X\lhd W)\rhd Z\rp.
\ee
Let $X, Y\in \fh_+$, and $Z, W\in\fh_-$, then
\be
(\kappa\otimes \kappa) \delta ([X, Y]) (Z\otimes W)=\kappa ([X, Y], [Z, W])=(-1)^{|Y|}\kappa (X, [Y, [Z,W]])=(-1)^{|Y|}\kappa (X, Y\rhd [Z,W]]).
\ee
Using equation \eqref{eq:ZXY}, the RHS has two terms. The first one is
\be
(-1)^{|Y|}\lp \kappa (X, [Y\rhd Z, W])+ (-1)^{|Y||Z|}\kappa (X,[Z, Y\rhd W])\rp=[\delta (X), \Delta (Y)](Z\otimes W).
\ee
Here the last equation follows from equation \eqref{eq:kappacommute}. The second term is
\be
(-1)^{|Y|}\lp \kappa (X, (Y\lhd Z)\rhd W)-(-1)^{|Z||W|} \kappa (X, (Y\lhd W)\rhd Z) \rp.
\ee
We can change this into something similar to the first term in the following way:
\be
\begin{aligned}
    \kappa (X, (Y\lhd Z)\rhd W)&= \kappa (X, [(Y\lhd Z), W])=(-1)^{|W|} \kappa ([W, X], Y\lhd Z)\\ &=(-1)^{|W|} \kappa (W\lhd X, [Y, Z])=-(-1)^{|W|} \kappa ([Y, Z], W\lhd X)\\ &=(-1)^{|W|}(-1)^{|W||X|}\kappa ([Y, Z], X\rhd W)\\ &= (-1)^{|W|}(-1)^{|W||X|} (-1)^{|Z|}\kappa (Y, [Z, X\rhd W])
\end{aligned}
\ee
This can be similarly done for $\kappa (X,  (Y\lhd W)\rhd Z)$. After fixing all the signs using $|X|+|Y|+|Z|+|W|=1$, we find in the end that the second term is equal to
\be
-(-1)^{|X||Y|}(-1)^{|X|} \lp \kappa (Y, [X\rhd Z, W])+ (-1)^{|X||Z|}\kappa (Y,[Z, X\rhd W])\rp
\ee
and this is readily equal to
\be
-(-1)^{|X||Y|} [\delta (Y), \Delta (X)](Z\otimes W)=(-1)^{|X|} [\Delta (X), \delta (Y)] (Z\otimes W). 
\ee
This completes the proof. 

\end{proof}

Symmetrically, the same is true for $\fh_-$. In summary, from a 1-shifted Manin-triple $(\fg, \fh_+,\fh_-)$, we obtain two Lie subalgebras $\fh_\pm$ with two anti-symmetric cobrackets
\be
\delta_\pm: \fh_\pm\longrightarrow \fh_\pm \otimes \fh_\pm [1]
\ee
satisfying the cocycle conditions of Proposition \ref{Prop:delta}. The structures on the two Lagrangian subalgebras are not independent: taking 1-shifted dual swaps $\fh_+$ and $\fh_-$, brackets with cobrackets. 

\subsection{1-shifted Lie bi-algebra and its classical double}

Given Proposition \ref{Prop:delta}, we see that a 1-shifted Manin-triple $(\fg, \fh_+,\fh_-)$ gives rise to a tensor
\be
\delta: \fh_+\to \mathrm{Sym}^2(\fh_+)[1]
\ee
satisfying certain cocycle conditions. This is extremely similar to the classical Lie bi-algebra structure, albeit with a degree shift. This motivates the following definition. 

\begin{Def}\label{Def:LBAmain}
 
 A 1-shifted Lie bi-algebra structure on $\fh$ is a co-bracket
 \be
 \delta: \fh\to \mathrm{Sym}^2(\fh)[1]
 \ee
satisfying
\be\label{eq:shiftedbialg}
\delta \otimes 1(\delta)+1\otimes \delta (\delta)=0\in \mathrm{Sym}^3(\fh), \qquad \delta([X,Y])=[\delta(X), \Delta(Y)]+(-1)^{|X|}[\Delta(X), \delta(Y)],
\ee
for all $X, Y\in\fh$. 

\end{Def}

In the theory of classical Lie bi-algebras, there is an equivalence between Manin-triples and Lie bi-algebras. Proposition \ref{Prop:delta} shows how from a 1-shifted Manin-triple one obtains a 1-shifted Lie bi-algebra (in fact two of them). We now show that the converse is true. 

\begin{Thm}\label{Thm:Manindelta}

Given a 1-shifted Lie bi-algebra $(\fh, \delta)$, there exists a 1-shifted Manin triple $(\fg, \fh, \fh^*[-1])$ inducing the co-bracket $\delta$ on $\fh$. This gives a one-one correspondence between 1-shifted Lie bi-algebras and 1-shifted Manin-triples. 

\end{Thm}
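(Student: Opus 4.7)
The plan is to construct the double $\fg := \fh \oplus \fh^*[-1]$ and equip it with a 1-shifted metric Lie algebra structure for which $\fh$ and $\fh^*[-1]$ are transverse Lagrangians inducing $\delta$. First I would equip $\fg$ with the degree $1$ pairing $\kappa$ arising from the canonical evaluation between $\fh$ and $\fh^*[-1]$, extended by zero on $\fh\otimes\fh$ and on $\fh^*[-1]\otimes\fh^*[-1]$. By construction $\kappa$ is non-degenerate, of degree $1$, antisymmetric in the sense of Section \ref{subsec:conventions}, and both summands are Lagrangian.

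Next I would define the Lie bracket on $\fg$ piece by piece. On $\fh$ it is the given bracket. On $\fh^*[-1]$ it is defined by dualising $\delta$ via
\be
\kappa(X)\bigl([\xi,\eta]_{\fh^*[-1]}\bigr) = (\kappa\otimes\kappa)(\delta(X))(\xi\otimes\eta),\qquad X\in\fh,\ \xi,\eta\in\fh^*[-1],
\ee
and the first equation in \eqref{eq:shiftedbialg} (co-Jacobi for $\delta$) translates directly into the Jacobi identity for this bracket. The cross brackets $[X,\xi]$ with $X\in\fh$ and $\xi\in\fh^*[-1]$ are then forced by the invariance \eqref{eq:1xkappa} of $\kappa$ combined with the brackets just defined on the two Lagrangians. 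Unpacking this, the $\fh^*[-1]$-component of $[X,\xi]$ is the coadjoint action of $X$ on $\xi$ (exactly as in Lemma \ref{Lem:kappadual}), and the $\fh$-component is the coadjoint action of $\xi$ on $X$ with respect to the dualised bracket.

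The substantive step is verifying the Jacobi identity on $\fg$. Within $\fh$ or within $\fh^*[-1]$ this is immediate. The mixed cases, with two entries in one Lagrangian and one in the other, reduce precisely to the cocycle condition in \eqref{eq:shiftedbialg}: the computation is the one carried out in the proof of Proposition \ref{Prop:delta}, now read in reverse as a logical equivalence between the mixed Jacobi identity and the derivation property of $\delta$ relative to $[-,-]_\fh$. Invariance of $\kappa$ is built into the construction of the cross brackets, so the result is a 1-shifted metric Lie algebra with $(\fg,\fh,\fh^*[-1])$ a 1-shifted Manin triple; by construction the co-bracket on $\fh$ induced via Proposition \ref{Prop:delta} recovers the original $\delta$.

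For the bijection, the key observation is that the cross brackets in any 1-shifted Manin triple are uniquely determined by the brackets on the two Lagrangians together with the invariance of $\kappa$. Hence given any $(\fg,\fh_+,\fh_-)$, Proposition \ref{Prop:delta} produces a 1-shifted Lie bialgebra structure on $\fh_+$ whose associated double (built as above) is canonically isomorphic to $\fg$, and conversely the construction above followed by Proposition \ref{Prop:delta} returns the starting co-bracket $\delta$. The main obstacle is the sign bookkeeping in the mixed Jacobi verification, but the argument of Proposition \ref{Prop:delta} already performs all the relevant manipulations, and reinterpreting that chain of equalities as equivalences delivers the needed equivalence between Jacobi on $\fg$ and the cocycle condition on $(\fh,\delta)$.
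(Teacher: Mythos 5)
Your construction coincides with the paper's: form $\fg=\fh\oplus\fh^*[-1]$ with the canonical degree-$1$ pairing, take the coadjoint actions as cross-brackets, and check the mixed Jacobi identities by running the proof of Proposition \ref{Prop:delta} in reverse. However, there is a genuine gap in the Jacobi verification. The two mixed cases are not symmetric with respect to the hypothesis. Reading Proposition \ref{Prop:delta} backwards does show that the case of two entries in $\fh^*[-1]$ and one in $\fh$ follows from the given cocycle condition for $(\fh,\delta)$: its nontrivial projection is equation \eqref{eq:ZXY}, the other projection being automatic because the coadjoint action is a Lie algebra action. But the mirror case, two entries in $\fh$ and one in $\fh^*[-1]$, reduces by the same argument to the cocycle condition for $(\fh^*[-1],\delta^*)$, where $\delta^*$ is the cobracket dual to $[-,-]_\fh$ --- and that is \emph{not} among your hypotheses. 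You must prove that the cocycle identity for $(\fh,\delta)$ implies the one for $(\fh^*[-1],\delta^*)$.

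This implication is where the paper spends most of its proof: it writes both identities in structure constants (equations \eqref{eq:deltaabc} and \eqref{eq:deltastarabc}) and matches the four terms on each side one by one, using the degree constraints $|c|+|b|=|d|$ and $|c|+|e|=|a|+1$ to reconcile the signs. In the unshifted theory this ``self-duality'' of the cocycle condition is already a real (if standard) lemma; in the $1$-shifted setting the shift by $[-1]$ changes the parities of all dual basis elements, so the sign bookkeeping is delicate and cannot be waved through by symmetry. The remainder of your argument --- the pairing, the cross-brackets forced by invariance, and the bijection via uniqueness of the cross-brackets --- matches the paper and is fine.
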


\begin{proof}

Let $\fh^*[-1]$ be the dual Lie algebra defined by $\delta$, and let $\delta^*$ be the 1-shifted co-bracket on $\fh^*[-1]$ dual to the Lie bracket of $\fh$. Let us first show that $(\fh^*[-1], \delta^*)$ also defines a 1-shifted Lie bialgebra. 

The property that $\delta^*\otimes 1(\delta^*)+1\otimes \delta^*(\delta^*)=0$ again follows from the Jacobi identity of $\fh$. We show the second identity in equation \eqref{eq:shiftedbialg}. 

To this end, let us choose graded basis $\{x_a\}$ of $\fh$, and dual basis $\{\epsilon^a\}$ of $\fh^*[-1]$, so that $(x_a, \epsilon^b)=\delta_a^b$. Note that $|x_a|=|\epsilon^a|+1$. Let $f_{ab}^c$ the structure constant of $x_a$, and $g^{ab}_c$ the structure constants of $\epsilon^a$. Then
\be
\delta^* (\epsilon^a)=-\sum (-1)^{|x_b|(|x_c|+1)}f^a_{bc}\epsilon^b\otimes \epsilon^c, \qquad \delta (x_a)=\sum (-1)^{|x_c|(|x_b|+1)}g^{bc}_a x_b\otimes x_c
\ee
For simplicity, we denote by $|a|$ the parity of $x_a$, and the parity of $\epsilon^a$ will be $|a|+1$. The second identity of equation \eqref{eq:shiftedbialg} for $\fh$ is given by the following
\be\label{eq:deltaabc}
\begin{aligned}
  \sum_c  f_{ab}^c(-1)^{|e|(|d|+1)} g^{de}_c & =  \sum_c(-1)^{|c|(|d|+1)}g^{dc}_a f_{cb}^e + (-1)^{|e|(|c|+1)+|e||b|}g^{ce}_af_{cb}^d \\ &-(-1)^{|a||b|}\sum_c \lp  (-1)^{|c|(|d|+1)}g^{dc}_b f_{ca}^e +(-1)^{|e|(|c|+1)+|e||a|}g^{ce}_bf_{ca}^d\rp.
\end{aligned}
\ee
We show that this is equivalent to the corresponding identity for $\fh^*[-1]$, that is,
\be\label{eq:deltastarabc}
\begin{aligned}
  &-\sum_c  f_{ab}^c(-1)^{|a|(|b|+1)} g^{de}_c  =  -\sum_c(-1)^{|c|(|b|+1)+(|e|+1)(|b|+1)} f_{cb}^d g^{ce}_a  + (-1)^{|a|(|c|+1)}  g^{ce}_b f_{ac}^d \\ &-(-1)^{(|d|+1)(|e|+1)}\sum_c \lp  (-1)^{|a|(|c|+1)+(|d|+1)(|a|+1)}g^{dc}_b f_{ac}^e +(-1)^{|c|(|b|+1)}g^{dc}_a f_{cb}^e\rp\,.
\end{aligned}
\ee

To prove this, let us first multiply both sides of equation \eqref{eq:deltastarabc} by $(-1)^{|a|(|b|+1)+|e|(|d|+1)}$ in order for its LHS to match that of equation \eqref{eq:deltaabc}. It then suffices to show that the four terms on the RHS of both equations are equal term by term. Let us check this explicitly for the first term of equation \eqref{eq:deltastarabc}, and show that it corresponds to the second term of equation \eqref{eq:deltaabc}. The other terms can then be verified with similar calculations.

After multiplication by the power of $(-1)$ described above, the first term of \eqref{eq:deltastarabc} reads
\be
(-1)^{|e|(|d|+1)}(-1)^{|a|(|b|+1)} (-1)^{|c|(|b|+1)} f_{cb}^d (-1)^{(|e|+1)(|b|+1)}g^{ce}_a \epsilon^a\otimes \epsilon^b,
\ee
which is comparable to the second term of \eqref{eq:deltaabc}. We therefore need to prove the identity
\be\label{eq:term2delta*abc}
\sum_c (-1)^{|e|(|d|+1)}(-1)^{|a|(|b|+1)} (-1)^{|c|(|b|+1)} (-1)^{(|e|+1)(|b|+1)}f_{cb}^dg^{ce}_a= \sum_c(-1)^{|e|(|c|+1)+|e||b|}g^{ce}_af_{cb}^d.
\ee
To verify this, we note that the indices satisfy further constraints
\be
|c|+|b|=|d|, \qquad |c|+|e|=|a|+1,
\ee
and consequently 
\be
\begin{aligned}
    |e|(|d|+1)+(|a|+|c|+|e|+1)(|b|+1)&=|e|(|c|+|b|+1)+(2|a|+2)(|b|+1)\\ \text{(since powers are mod 2)}\ \ \ \  &= |e|(|c|+1)+|e||b|\, ,
\end{aligned}
\ee
which coincides with the sign on the RHS of equation \eqref{eq:term2delta*abc}. 

The equivalence between the other three terms of equation \eqref{eq:deltastarabc} and those of equation \eqref{eq:deltaabc} can be checked using similar steps.

%Similarly, the coefficient of $\epsilon^a\otimes \epsilon^b $ from $[\delta^*(\epsilon^d), 1\otimes \epsilon^e]$ reads

%\be
%(-1)^{|e|(|d|+1)}(-1)^{|a|(|b|+1)} (-1)^{|a|(|c|+1)}  g^{ce}_b f_{ac}^d 
%\ee
%and we want to show that it corresponds to \fixme{Wenjun: victor the term below should be $f^b_{ca}$, and there should be a minus sign, which was my fault in \eqref{eq:deltaabc}, after these corrections you should see the right result.}
%\be
%    -(-1)^{|a||b|+|e|(|c|+1)+|e||a|}g^{ce}_b f^d_{ca}\, .
%\ee
%Permuting the indices $a$ and $c$ to match the structure constants brings an extra contribution of $(-1)^{|a||c|+1}$:
%\be\label{eq:term2secondcalc}
%    -(-1)^{|a|(|b|+|c|+|e|)+|e|%(|c|+1)+1}g^{ce}_b f^d_{ca}\, .
%\ee
%We can use the degree identities $|a|+|c|=|d|$ and $|c|+|e|=|b|+1$, which already simplify the term in \eqref{eq:term2secondcalc} to 
%\be
 %   (-1)^{|e|(|c|+1)+|a|
%}g_b^{ce}f_{ca}^d \, .
%\ee

%Let us now consider
%\begin{align*}
 %   |e|(|d|+1)+|a|(|b|+1)+|a|(|c|+1) &= |e|(|a|+|c|+1)+|a|(2|c|+|e|+|1|)\\
 %   &= |e|(|c|+1)+|a|\, ,
%\end{align*}
%which indeed matches the expected power.

To finish, we define a Lie bracket on $\fg:=\fh\oplus \fh^*[-1]$ by
\be
[X,Y]=X\rhd Y+X\lhd Y, \qquad X\in \fh, Y\in \fh^*[-1]
\ee 
Here $\rhd$ is the left action of $\fh$ on its shifted dual, multiplied by $(-1)^{|*|}$, and $\lhd$ is the right action of $\fh^*[-1]$ on its shifted dual, multiplied by $(-1)^{|*|}$. The multiplication of the parity is due to Lemma \ref{Lem:kappadual}. To show that this, together with the Lie bracket on $\fh, \fh^*[-1]$ makes $\fg$ into a graded Lie algebra, we simply need to check Jacobi identity. Let us start with $X, Y\in \fh^*[-1]$ and $Z\in\fh $, then the Jacobi identity of these three elements is equivalent to equation \eqref{eq:ZXY} together with the following equality:
\be\label{eq:XYZ}
Z\lhd [X, Y]=Z\lhd X\lhd Y-(-1)^{|X||Y|}Z\lhd Y\lhd X.
\ee
Equation \eqref{eq:ZXY} follows from the co-cycle condition of $\delta$, as the proof of Proposition \ref{Prop:delta} has shown, and equation \eqref{eq:XYZ} follows from the fact that the action of $\fh^*[-1]$ on its dual is a Lie-algebra action. The same is true if one has two elements in $\fh$ and one in $\fh^*[-1]$. This completes the proof.

\end{proof}

\begin{Def}

For a 1-shifted Lie bi-algebra  $(\fh, \delta)$, we call the Lie algebra $\fg$ supplemented by Theorem \ref{Thm:Manindelta} the 1-shifted double of $\fh$, and denote it by $D_1(\fh, \delta)$, or simply $D_1(\fh)$ if no confusion can be caused. 

\end{Def}

\begin{Exp}\label{Exp:cotangent}

For any DG Lie algebra $\fh$, we can view this as a trivial 1-shifted Lie bi-algebra. The corresponding double in this case is simply $\fh\ltimes \fh^*[-1]$, which can also be denote by $T^*[-1]\fh$. Namely, $D_1(\fh, 0)=T^*[-1]\fh$. 

\end{Exp}

The double of an ordinary Lie bi-algebra is also a Lie bi-algebra, and moreover has a classical $r$-matrix such that the co-bracket on the double is induced from this classical $r$-matrix. Let us now consider $\fh$ a 1-shifted Lie bi-algebra, and $\fg:=D_1(\fh)$ its double. Let $\{x_a\}\subseteq\fh$ a set of graded basis and $\{\epsilon^a\}\subseteq \fh^*[-1]$ the corresponding dual basis, such that $\kappa( x_a, \epsilon^b)=\delta_a^b$. Define $\mathbf{r}\in \fg\otimes \fg$ by:
\be
\mathbf{r}=\sum_a x_a\otimes \epsilon^a\in \fh\otimes \fh^*[-1],
\ee
which is an element in $\fg\otimes \fg$ of degree $1$. This element satisfies
\be
1\otimes \kappa (\mathbf{r}, X)=-X, \qquad \kappa\otimes 1(\mathbf{r}, Y)=(-1)^{|Y|}Y, \qquad X\in \fh, Y\in \fh^*[-1]. 
\ee
Moreover $\kappa \otimes \kappa (\mathbf{r}, Y\otimes X)=\kappa ((-1)^{|Y|}Y, X)=\wt\kappa (Y, X)$ for $X\in \fh, Y\in \fh^*[-1]$.  

\begin{Prop}\label{Prop:coboundaryr}

Denote by $\delta_\fg$ the co-bracket $\delta_\fh\oplus -\delta_{\fh^*[-1]}$. Then:
\be
\delta_\fg=[-\mathbf{r}, \Delta].
\ee

\end{Prop}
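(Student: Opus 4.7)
The identity is linear, so it suffices to verify $\delta_\fg(X) = [-\mathbf{r}, \Delta(X)]$ on generators of $\fg$, splitting into the cases $X \in \fh$ and $Y \in \fh^*[-1]$. Using the graded tensor-product multiplication $(a \otimes b)(c \otimes d) = (-1)^{|b||c|} ac \otimes bd$ in $U(\fg) \otimes U(\fg)$, a direct expansion (summation over $a$ implicit) gives
\begin{equation*}
[\mathbf{r}, \Delta(X)] \;=\; (-1)^{|X||\epsilon^a|}\,[x_a, X] \otimes \epsilon^a \;+\; x_a \otimes [\epsilon^a, X].
\end{equation*}

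For $X \in \fh$, I decompose $[\epsilon^a, X] = -(-1)^{|\epsilon^a||X|}(X \rhd \epsilon^a + X \lhd \epsilon^a)$, producing a priori contributions in $\fh \otimes \fh$ and $\fh \otimes \fh^*[-1]$. The $\fh \otimes \fh^*[-1]$ piece must vanish: the tensor $\sum (x_a \otimes \epsilon^a - \epsilon^a \otimes x_a)$ is the Casimir associated to $\kappa$ and hence $\fg$-invariant, which forces $[\mathbf{r}, \Delta(X)]$ to be $\sigma$-symmetric; together with its support in $\fh \otimes \fg$, this implies it lies in $\fh \otimes \fh$. The cancellation can also be checked directly using Lemma \ref{Lem:kappadual} to express $X \rhd \epsilon^a$ in terms of the co-adjoint action, after which the Koszul signs combine correctly upon invoking the grading constraint on nonvanishing structure constants of $\fh$.

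The remaining $\fh \otimes \fh$ component $-(-1)^{|\epsilon^a||X|}\,x_a \otimes (X \lhd \epsilon^a)$ is identified with $-\delta_\fh(X)$ by pairing with an arbitrary $\epsilon^b \otimes \epsilon^c$ under $\kappa \otimes \kappa$. Using the invariance relation \eqref{eq:kappacommute} and the Lagrangian property of $\fh^*[-1]$, this pairing reduces to $-\kappa(X, [\epsilon^b, \epsilon^c])$, which by the defining equation of $\delta_\fh$ is exactly the pairing of $-\delta_\fh(X)$ against $\epsilon^b \otimes \epsilon^c$; non-degeneracy of $\kappa$ then yields $[-\mathbf{r}, \Delta(X)] = \delta_\fh(X) = \delta_\fg(X)$. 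The case $Y \in \fh^*[-1]$ proceeds in parallel: the $\fh \otimes \fh^*[-1]$ contribution from $[x_a, Y] \otimes \epsilon^a$ cancels against $x_a \otimes [\epsilon^a, Y]$, leaving a pure $\fh^*[-1] \otimes \fh^*[-1]$ term of the form $(-1)^{|Y||\epsilon^a|}(x_a \rhd Y) \otimes \epsilon^a$ which matches $\delta_{\fh^*[-1]}(Y)$ on the nose. The extra minus sign in $\delta_\fg|_{\fh^*[-1]} = -\delta_{\fh^*[-1]}$ emerges because $\mathbf{r}$ places $\fh$ on the first tensor factor and $\fh^*[-1]$ on the second, so the roles of ``acting'' and ``dualized'' variable are exchanged relative to the $X \in \fh$ case, producing the opposite overall sign.

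The main obstacle is the careful bookkeeping of Koszul signs introduced by the degree-one anti-symmetric form $\kappa$ and the degree shifts between $\epsilon^a$ and $x_a$. In particular, the cancellation in the $\fh \otimes \fh^*[-1]$ component only succeeds after combining the signs produced by Lemma \ref{Lem:kappadual} with the grading restrictions $|x_p| = |x_q| + |x_e|$ on nonvanishing $f_{qe}^p$, and the sign flip in the $Y$-case must be tracked with care to recover the form of $\delta_\fg$ claimed in the statement.
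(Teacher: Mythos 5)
Your proof is correct and reaches the right conclusion, but it is organized differently from the paper's argument. The paper never expands $[-\mathbf{r},\Delta(X)]$ in a basis: it pairs the whole expression against test vectors $Y\otimes Z$ via $\kappa\otimes\kappa$, uses invariance to rewrite this as $(-1)^{|X|}\kappa\otimes\kappa(-\mathbf{r},[\Delta(X),Y\otimes Z])$, and then runs a case analysis on which Lagrangian $Y$ and $Z$ lie in; the vanishing of the mixed component is obtained by an explicit two-term sign cancellation using the constraint $|X|+|Y|+|Z|=1$. You instead expand the commutator explicitly (your formula $[\mathbf{r},\Delta(X)]=(-1)^{|X||\epsilon^a|}[x_a,X]\otimes\epsilon^a+x_a\otimes[\epsilon^a,X]$ is correct) and kill the mixed $\fh\otimes\fh^*[-1]$ piece by a symmetry argument: $[\Omega,\Delta(X)]=0$ forces $[\mathbf{r},\Delta(X)]$ to be $\sigma$-symmetric, and a $\sigma$-symmetric element of $\fh\otimes\fg$ must lie in $\fh\otimes\fh$. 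This is a genuinely different, and arguably cleaner, mechanism for the hardest step --- it sidesteps the four-sign bookkeeping entirely and makes transparent \emph{why} the cross terms cancel (it is the same invariance of the Casimir that the paper only establishes later, in Section 3, but that fact is elementary and independent, so there is no circularity). The trade-off is that your identification of the surviving $\fh\otimes\fh$ component with $-\delta_\fh(X)$, and the sign flip in the $\fh^*[-1]$ case, are asserted rather than computed; the paper's uniform pairing computation exhibits those signs explicitly (and its treatment of the second case, ``swap $\mathbf{r}$ by $-\mathbf{r}^{21}$,'' is essentially the role-exchange you describe). Since the paper's own calculation confirms that these signs do work out, I see no gap, only deferred verification.
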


\begin{proof}

Let us choose $X\in \fh$ and $Y, Z\in \fg$, and consider
\be
\kappa\otimes \kappa ([-\mathbf{r}, \Delta(X)], Y\otimes Z).
\ee
We need to show that this is nonzero only when $Y, Z\in \fh^*[-1]$, and is equal to $\kappa (X, [Y, Z])$. Now we have
\be
\kappa\otimes \kappa ([-\mathbf{r}, \Delta(X)], Y\otimes Z)=(-1)^{|X|}\kappa\otimes \kappa (-\mathbf{r},[\Delta (X), Y\otimes Z]),
\ee
and so if $Y\in \fh$, then the above is automatically zero. Let us assume now that $Z\in \fh$ and $Y\in \fh^*[-1]$. Then we have
\be
\begin{aligned}
    \kappa\otimes \kappa (-\mathbf{r},[\Delta (X), Y\otimes Z])&=\kappa\otimes \kappa (-\mathbf{r},[X, Y]\otimes Z)+(-1)^{|X||Y|} \kappa\otimes \kappa (-\mathbf{r},Y\otimes [X,Z])
    \\ &=-(-1)^{|X|+|Y|}\kappa ([X, Y], Z)-(-1)^{|X||Y|}(-1)^{|Y|}\kappa (Y, [X, Z])\\ &= (-1)^{|X|+|Y|}(-1)^{|X|+|X||Z|}\kappa (Y, [X, Z])-(-1)^{|X||Y|}(-1)^{|Y|}\kappa (Y, [X, Z])
\end{aligned}
\ee
Since the above is non-zero only when $|X|+|Y|+|Z|=1$, we find in the end that $|X|+|Y|+|X|+|X||Z|=|Y|+|X||Y|$, and the above is zero. Finally, if $Y, Z\in \fh^*[-1]$, then the first term above drops off, and we have
\be
\begin{aligned}
    \kappa\otimes \kappa (-\mathbf{r},[\Delta (X), Y\otimes Z])&= (-1)^{|X||Y|} \kappa\otimes \kappa (-\mathbf{r},Y\otimes [X,Z])\\ &=-(-1)^{|X||Y|}(-1)^{|Y|}\kappa (Y, [X, Z])\\ &=-(-1)^{|X||Y|}(-1)^{|Y|}(-1)^{|X|+|Y|}\kappa (X, [Z, Y])\\ &=(-1)^{|X||Y|}(-1)^{|Y|}(-1)^{|X|+|Y|}(-1)^{|Y||Z|}\kappa (X, [Y, Z])
\end{aligned}
\ee
Since this is non-zero only when $|X|+|Y|+|Z|=1$, we find that the above sign is equal to $(-1)^{|X|}$, as desired. For $\fh^*[-1]$, we can perform a similar argument, swapping $\mathbf{r}$ by $-\mathbf{r}^{21}$. The proof is complete. 

\end{proof}

As a consequence, the Lie algebra $\fg=D_1 (\fh)$ has the structure of a 1-shifted Lie bi-algebra induced by $\delta_\fg:=[-\mathbf{r}, \Delta]$. Indeed, from the above we see that this defines a map
\be
\delta: \fg\to \mathrm{Sym}^2(\fg)[1],
\ee
and clearly satisfies the second identity of equation \eqref{eq:shiftedbialg}. It also satisfies the first identity since it does on both $\fh$ and $\fh^*[-1]$ and $\fg=\fh\oplus \fh^*[-1]$. We will call this $\mathbf{r}$ the 1-shifted $r$-matrix of $\fh$, due to its similar role as the ordinary $r$-matrix. We now show that an analogue of classical Yang-Baxter equation is satisfied by $\mathbf{r}$. 

\begin{Prop}\label{Prop:1shiftedYB}
    This element $\mathbf{r}$ satisfies the following equalities.
\be\label{eq:classicaldr[rr]}
\delta_{\fg}\otimes 1 (\mathbf{r})=[\mathbf{r}^{13}, \mathbf{r}^{23}],\qquad 1\otimes \delta_{\fg}(\mathbf{r})=[\mathbf{r}^{12}, \mathbf{r}^{13}].
\ee
Consequently, the following classical Yang-Baxter equation holds
  \be\label{eq:1shiftedYB}
[\mathbf{r}^{12}, \mathbf{r}^{13}]+[\mathbf{r}^{12}, \mathbf{r}^{23}]+[\mathbf{r}^{13},\mathbf{r}^{23}]=0\in \fg\otimes \fg\otimes \fg. 
    \ee

\end{Prop}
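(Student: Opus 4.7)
The plan is to first establish the two identities in (\ref{eq:classicaldr[rr]}) by expanding both sides in the graded basis $\{x_a\} \subset \fh$, $\{\epsilon^a\} \subset \fh^*[-1]$, and then to deduce the classical Yang-Baxter equation (\ref{eq:1shiftedYB}) as an algebraic consequence via the coboundary formula $\delta_\fg = [-\mathbf{r}, \Delta]$ from Proposition \ref{Prop:coboundaryr}.

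For the first identity $(\delta_\fg \otimes 1)(\mathbf{r}) = [\mathbf{r}^{13}, \mathbf{r}^{23}]$, I observe that in the expansion $\mathbf{r}^{13} = \sum_b x_b \otimes 1 \otimes \epsilon^b$ and $\mathbf{r}^{23} = \sum_c 1 \otimes x_c \otimes \epsilon^c$, the $x$-factors sit in disjoint tensor positions 1 and 2 and hence commute, so that the only contribution to the graded commutator comes from the third factor, producing a sum of the form $\sum_{b,c} \pm\, x_b \otimes x_c \otimes [\epsilon^b, \epsilon^c]$. The bracket $[\epsilon^b, \epsilon^c]$ is encoded by the structure constants $g^{bc}_a$ of $\fh^*[-1]$, which by construction are precisely the coefficients appearing in $\delta_\fh(x_a) = \sum (-1)^{|x_c|(|x_b|+1)} g^{bc}_a\, x_b \otimes x_c$ as written in Section \ref{subsec:conventions}. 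Since $(\delta_\fg \otimes 1)(\mathbf{r}) = \sum_a \delta_\fh(x_a) \otimes \epsilon^a$, a straightforward reindexing matches the two sides. The second identity $(1 \otimes \delta_\fg)(\mathbf{r}) = [\mathbf{r}^{12}, \mathbf{r}^{13}]$ is handled by the symmetric argument, with the bracket in position 1 now contributing via the structure constants $f^a_{bc}$ of $\fh$, and $\delta_{\fh^*[-1]}$ replacing $\delta_\fh$; the minus sign from $\delta_\fg|_{\fh^*[-1]} = -\delta_{\fh^*[-1]}$ (implicit in Proposition \ref{Prop:coboundaryr}) is absorbed into the matching.

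To deduce (\ref{eq:1shiftedYB}), I apply the coboundary formula termwise: since $\delta_\fg(x_a) = -[\mathbf{r}, \Delta(x_a)] = -[\mathbf{r}, x_a \otimes 1 + 1 \otimes x_a]$, and $(\Delta \otimes 1)(\mathbf{r}) = \mathbf{r}^{13} + \mathbf{r}^{23}$, a direct computation comparing the graded brackets $[\mathbf{r}, x_a \otimes 1] \otimes \epsilon^a$ and $[\mathbf{r}, 1 \otimes x_a] \otimes \epsilon^a$ in $U(\fg)^{\otimes 2}$ against the corresponding commutators $[\mathbf{r}^{12}, \mathbf{r}^{13}]$ and $[\mathbf{r}^{12}, \mathbf{r}^{23}]$ in $U(\fg)^{\otimes 3}$ yields
\begin{equation*}
(\delta_\fg \otimes 1)(\mathbf{r}) = -[\mathbf{r}^{12}, \mathbf{r}^{13}] - [\mathbf{r}^{12}, \mathbf{r}^{23}].
\end{equation*}
Combining this with the already-established first identity gives $[\mathbf{r}^{13}, \mathbf{r}^{23}] = -[\mathbf{r}^{12}, \mathbf{r}^{13}] - [\mathbf{r}^{12}, \mathbf{r}^{23}]$, which rearranges to the desired CYBE (\ref{eq:1shiftedYB}).

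The principal technical difficulty is Koszul-sign bookkeeping. In the first two steps, the explicit sign $(-1)^{|x_c|(|x_b|+1)}$ in $\delta_\fh$ must match the Koszul signs generated when sliding $\epsilon^b$ past $\epsilon^c$ in the third factor of $U(\fg)^{\otimes 3}$. In the third step, since the commutator in $U(\fg)^{\otimes 3}$ sees the $\epsilon^a$ in position 3 via Koszul rules while the formula $[\mathbf{r}, \Delta(x_a)] \otimes \epsilon^a$ does not, their equality relies on the parity constraint $|\epsilon^a| = |x_a| - 1$ and on the total-degree selection rule forcing $|x_a|+|x_b|+|\epsilon^b|+\ldots$ to balance in each nonzero term. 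All signs cancel under these constraints, but this must be verified carefully on a term-by-term basis.
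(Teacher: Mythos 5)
Your proposal is correct and follows essentially the same route as the paper: both establish the first identity of \eqref{eq:classicaldr[rr]} by a direct verification (the paper pairs against $X\otimes Y\in\fh^*[-1]\otimes\fh^*[-1]$ via $\kappa\otimes 1(\mathbf{r},Y)=(-1)^{|Y|}Y$, whereas you expand in the basis and match the structure constants $g^{bc}_a$ against $\delta_\fh(x_a)$ --- the same computation in coordinates), and both then deduce the Yang--Baxter equation from the coboundary formula via $\delta_\fg\otimes 1(\mathbf{r})=-[\mathbf{r}^{12},\Delta\otimes 1(\mathbf{r})]=-[\mathbf{r}^{12},\mathbf{r}^{13}]-[\mathbf{r}^{12},\mathbf{r}^{23}]$. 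The sign bookkeeping you flag does close up (e.g.\ $[\mathbf{r}^{13},\mathbf{r}^{23}]=\sum(-1)^{(|x_b|+1)|x_c|}g^{bc}_a\,x_b\otimes x_c\otimes\epsilon^a$ matches the convention for $\delta(x_a)$ exactly), so no gap remains.
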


\begin{proof}
      Let us start by showing that $\delta_{\fg}\otimes 1(\mathbf{r})=[\mathbf{r}^{13},\mathbf{r}^{23}]$. Let $X, Y\in \fh^*[-1]$, we have
      \be
\kappa\otimes 1 (\delta_{\fg}\otimes 1(\mathbf{r}), X\otimes Y)=\kappa\otimes 1 (\mathbf{r},[X, Y])=(-1)^{|X|+|Y|}[X, Y].
      \ee
      On the other hand, we have
      \be
\kappa\otimes 1 [\mathbf{r}^{13},\mathbf{r}^{23}] (X\otimes Y)=[(-1)^{|X|}X, (-1)^{|Y|}Y]=(-1)^{|X|+|Y|}[X, Y].
      \ee
      This shows that the two agrees. Equality $1\otimes \delta_{\fg}(\mathbf{r})=[\mathbf{r}^{12}, \mathbf{r}^{13}]$ follows from an identical calculation, and in fact follows from the cYBE of $\mathbf{r}$, which we prove now. 

 Using the definition of $\delta_\fg$, we have:
    \be
\delta_{\fg}\otimes 1 (\mathbf{r})=-[\mathbf{r}^{12}, \Delta\otimes 1(\mathbf{r})]= -[\mathbf{r}^{12}, \mathbf{r}^{13}]-[\mathbf{r}^{12}, \mathbf{r}^{23}].
    \ee
Consequently, equality $\delta_{\fg}\otimes 1 (\mathbf{r})=[\mathbf{r}^{13},\mathbf{r}^{23}]$ implies
\be
[\mathbf{r}^{12}, \mathbf{r}^{13}]+[\mathbf{r}^{12}, \mathbf{r}^{23}]+[\mathbf{r}^{13},\mathbf{r}^{23}]=0.
\ee
    
\end{proof}

\section{Quantization of 1-shifted Manin-triples}\label{sec:quantdouble}

Given a 1-shifted Manin-triple $(\fg, \fh_+,\fh_-)$, we have seen that there are 1-shifted Lie bialgera structures defined on $(\fg, \fh_\pm)$ such that the co-bracket is co-boundary for $\fg$ and induced by a classical $r$-matrix. In this section, we consider the problem of their quantization. 

We will first propose a quantization of the double $\fg$, in the sense that we construct a deformation of the symmetric monoidal category $U(\fg)\Mod$ over $\C\lbb\hbar\rbb$, which is canonical and non-symmetric for $\hbar\ne 0$. Our proposed quantization is in terms of a curved differential graded algebra (CDGA for short) $(U(\fg)\lbb\hbar\rbb,\hbar^2 W)$, rather than an ordinary DG algebra. From the proof, it will be clear that from a pair of transverse Lagrangians, one can twist this into a differential CDGA $(U(\fg)\lbb\hbar\rbb,d_{\mathbf r}, \hbar^2 c)$ where $c\in \fg_2$. We describe the monoidal structure by making the quantization into co-unital co-algebra objects in the category of CDGAs. The 1-shifted $r$-matrix will enter into the definition of the coproduct. 

We then turn to the quantization of $\fh_\pm$. Unfortunately we don't know how to construct them in full generality. We will impose the dramatically-simplifying assumption \ref{Asp:g2=0}, which forces the curvature $c$ above to vanish. We show that one can find DG algebras $U_\hbar (\fh_\pm)\subseteq U_\hbar (\fg)$, such that $U_\hbar (\fh_\pm)$ are Koszul dual to each other, and $U_\hbar (\fh_\pm)\Mod$ are monoidal module categories of $U_\hbar (\fg)$. Though restricting, this assumption is satisfied by examples we consider in Section \ref{sec:meromorphic}.

\begin{Rem}
    In all that follows, modules of $U(\fg)\lbb\hbar\rbb$ are always flat over $\Ch$. 
\end{Rem}

\subsection{Quantization of the double}\label{subsec:quantdouble}

\subsubsection{Recollection on curved differential graded algebras}

We start by recalling quickly the definition of a curved differential graded algebra (CDGA for short) and related concepts. For more details, we refer the readers to \cite{positselski2023differential}. 

 A \textbf{curved differential graded algebra} (CDGA) is a triple $(A, d, W)$ consisting of
\begin{enumerate}
    \item A graded algebra $A$.

    \item A degree $1$ differential $d$ on $A$.

    \item An element $W\in A_2$ such that $dW=0$ and $d^2=[W, -]$. 
\end{enumerate}
The element $W$ is usually called a curvature. We use the notation $W$ since in physics, such a curvature can come from including a potential function, usually denoted by $W$. One simple example of a CDGA is a graded algebra $A$ with a central element $W$ of degree $2$. A map between two CDGAs $(A, d_A, W_A)$ and $(B, d_B, W_B)$ is a pair $(f, \alpha)$ where
\begin{enumerate}
    \item $f:A\to B$ is a map of graded algebras, $\alpha\in B_1$. 

    \item $f(d_Aa)=d_B f(a)+[\alpha, f(a)]$ and $f(W_A)=W_B+d_B \alpha+\alpha^2$. 
    
\end{enumerate}
This map is called an isomorphism if $f$ is an isomorphism. Note that the definition of a morphism includes a MC element, which is important for us since the monoidal structure on the quantization is induced by maps of CDGAs, even if $W$ is zero. The composition of two morphisms $(f, \alpha)$ and $(g, \beta)$ is $(g\circ f, \beta+g(\alpha))$. The collection of CDGAs form a symmetric monoidal category. 

A module of a CDGA $(A, d, W)$ is a tuple $(M, d_M)$ such that
\begin{enumerate}
    \item $M$ is a graded module of $A$.

    \item $d_M$ is a map of degree $1$ such that $[d_M, a]=d(a)$ and $d_M^2= W$. 
\end{enumerate}
All modules of a CDGA form a DG category which we denote by $(A, d, W)\Mod$. A morphism $(f, \alpha)$ from $(A, d_A, W_A)$ to $(B, d_B, W_B)$ induces a functor
\be
(f, \alpha)_*: (B, d_B, W_B)\Mod\to (A, d_A, W_A)\Mod.
\ee
Explicitly, this functor maps an object $(M, d_M)$ to $(M_f, d_M+\alpha)$, where $M_f$ is a module of $A$ via restriction along $f$. Two isomorphic CDGAs have equivalent category of modules.

\subsubsection{Curvature on the double}

Let us now fix a 1-shifted metric Lie algebra $\fg$, and choose any pair of transverse Lagrangians $(\fh_\pm)$. There is always a canonical such choice given by
\be\label{eq:canLag}
\fh_+=\bigoplus_{n\leq 0}\fg_n, \qquad \fh_-=\bigoplus_{n\geq 1} \fg_n.
\ee
We will construct a central element $W$ of degree $2$ in $U(\fg)$ using such a choice, and show along the way that this element does not depend on the choice. 

Fixing such a choice, we obtain a co-bracket $\delta_{\mathbf r}: \fg\to \mathrm{Sym}^2(\fg)[1]$, given by $\delta_{\mathbf r}=[-\mathbf{r}, \Delta]$. Let us denote by $d_{\mathbf{r}}$ the map
\be
\btik
\fg\rar{\hbar \delta_\mathbf{r}} & \mathrm{Sym}^2(\fg)\lbb\hbar\rbb [1]\rar{\nabla} & U(\fg)\lbb\hbar\rbb [1]
\etik
\ee
where $\nabla$ is the multiplication map. From the second identify of equation \eqref{eq:shiftedbialg}, we conclude that $d_{\mathbf r}$ is a differential, and can be extended to the entire algebra $U(\fg)\lbb\hbar\rbb$. Moreover, it is inner and given by
\be
d_\mathbf{r}=[-\hbar \nabla\mathbf{r}, -], .
\ee
Since $\delta_\mathbf{r}$ is valued in the symmetric algebra, we can also conclude that $C=\nabla\mathbf{r}-\nabla\sigma \mathbf{r}$ is central. Let us denote by
\be
\Omega=\mathbf{r}-\sigma\mathbf{r}\in \fg\otimes \fg, \qquad \rho= -\frac{1}{2}\lp \nabla\mathbf{r}+\nabla\sigma \mathbf{r}\rp,
\ee
so that $d_\mathbf{r}=[\hbar \rho, -]$ and $C=\nabla \Omega$. 

\begin{Lem}
The tensor $\Omega$ is canonical (namely does not depend on the choice of $(\fh_\pm)$), and satisfies
\be
[\Omega, \Delta (x)]=0, \forall x\in \fg.
\ee

\end{Lem}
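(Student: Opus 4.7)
The plan is to identify $\Omega$ with the canonical ``Casimir tensor'' associated to $\kappa$, which makes both claims of the lemma almost immediate.

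For canonicity, I would first observe that, starting from a transverse Lagrangian decomposition $\fg = \fh_+ \oplus \fh_-$ with dual bases $\{x_a\}$ of $\fh_+$ and $\{\epsilon^a\}$ of $\fh_-$ (so $\kappa(x_a, \epsilon^b) = \delta_a^b$), the combined set $\{x_a\} \cup \{\epsilon^a\}$ is a homogeneous basis of $\fg$. Its $\kappa$-dual basis (with respect to pairing on the left) is $\{\epsilon^a\} \cup \{-x_a\}$, the sign on the second block coming from the antisymmetry relation $\kappa(\epsilon^a, x_b) = -\kappa(x_b, \epsilon^a) = -\delta^a_b$ of equation \eqref{eq:kappacommute}. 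Since $\kappa$ has degree $1$, the degrees satisfy $|x_a| + |\epsilon^a| = 1$, so exactly one of them is odd and the product $|x_a|\cdot|\epsilon^a|$ is always even; hence the Koszul sign in $\sigma\mathbf{r}$ trivializes and a short calculation gives
\[
\Omega = \mathbf{r} - \sigma \mathbf{r} = \sum_a x_a \otimes \epsilon^a - \sum_a \epsilon^a \otimes x_a = \sum_I e_I \otimes e^I,
\]
where $\{e_I\}$ is any homogeneous basis of $\fg$ and $\{e^I\}$ its $\kappa$-dual basis. A standard change-of-basis computation shows that the rightmost expression is basis-independent, so $\Omega$ does not depend on the choice of transverse Lagrangians used to write down $\mathbf{r}$.

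For invariance, I would unpack $[\Omega, \Delta(x)] = 0$ into the identity
\[
\sum_I [x, e_I] \otimes e^I + \sum_I (-1)^{|x||e_I|}\, e_I \otimes [x, e^I] \;=\; 0 .
\]
Expanding $[x, e_I] = \sum_J c_I^J(x)\, e_J$ and matching coefficients of $e_J \otimes e^K$ reduces this to a bilinear identity relating the structure constants of $\mathrm{ad}_x$ on $\{e_I\}$ and on $\{e^J\}$, which is exactly the dualization of the invariance of $\kappa$ expressed in \eqref{eq:kappacommute}. This is the degree-$1$, graded analogue of the classical fact that the inverse of an invariant bilinear form is an invariant element of $\fg \otimes \fg$, i.e. central for the diagonal adjoint action in $U(\fg) \otimes U(\fg)$.

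The main obstacle is purely bookkeeping: one must verify carefully that (i) the antisymmetry-generated minus sign in $\{\epsilon^a\} \cup \{-x_a\}$ combines correctly with the Koszul sign in $\sigma$ so that $\Omega$ really coincides with $\sum_I e_I \otimes e^I$, and (ii) the Koszul signs appearing in $\Delta(x)$ match those coming out of the invariance identity for $\kappa$. Both are routine but must be executed with care given the nontrivial degree of the pairing.
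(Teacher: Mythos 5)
Your proposal is correct and follows essentially the same route as the paper: both identify $\Omega$ with the quadratic Casimir tensor of $\kappa$ (noting that $\{x_a,\epsilon^b\}$ has $\kappa$-dual basis $\{\epsilon^a,-x_b\}$), deduce canonicity from basis-independence of the Casimir, and derive $[\Omega,\Delta(x)]=0$ from the invariance of $\kappa$. Your sign bookkeeping (the Koszul sign trivializing because $|x_a|$ and $|\epsilon^a|$ have opposite parities) is the right verification and is consistent with the paper's conventions.
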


\begin{proof}

    The fact that $\Omega$ is canonical follows from the fact that $\Omega$ is the quadratic Casimir element associated to $\kappa$. Indeed, if we choose basis $\{x_a\}$ of $\fh_+$ with dual basis $\{\epsilon^a\}\in\fh_-$, then $\{x_a, \epsilon^b\}$ is a basis for $\fg$ with dual basis $\{\epsilon^a, -x_b\}$ under $\kappa$. The fact that $[\Omega, \Delta]=0$ follows from the fact that $\delta$ is valued in $\mathrm{Sym}^2(\fg)$, or that $\kappa$ is invariant.  
    
\end{proof}

Since $d_\mathbf{r}=[\hbar\rho, -]$, we see that
\be
d^2_\mathbf{r}=[\frac{\hbar^2}{2}[\rho, \rho], -].
\ee
Since $\rho\in \mathrm{Sym}^2(\fg)\subseteq U(\fg)$, the commutator $\frac{1}{2}[\rho, \rho]$ belongs to
\be
\frac{1}{2}[\rho, \rho]\in \mathrm{Sym}^3(\fg)\oplus \fg.
\ee
We can therefore write $\frac{1}{2}[\rho, \rho]=c_\mathbf{r}-W_\mathbf{r}$, for $c_\mathbf{r}\in \fg$ and $W_\mathbf{r}\in \mathrm{Sym}^3(\fg)$. 

\begin{Lem}\label{Lem:d2=c}
    The element $W_\mathbf{r}$ is central, and therefore
    \be
d^2_\mathbf{r}=[\hbar^2 c_\mathbf{r}, -].
    \ee
\end{Lem}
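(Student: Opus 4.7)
The plan is to identify $W_\mathbf{r}$ with the top PBW symbol of $\tfrac12[\rho,\rho]$ in $\Sym^3(\fg)$ and to deduce its $\fg$-invariance from the co-Jacobi identity for $\delta_\mathbf{r}$; centrality in $U(\fg)$ will then follow automatically from the $\fg$-equivariance of the PBW symmetrization $\mathrm{sym}:\Sym^*(\fg)\xrightarrow{\sim}\mathrm{gr}\,U(\fg)$. By definition $-\rho=\mathrm{sym}(\mathbf{s})$, where $\mathbf{s}=\tfrac12(\mathbf{r}+\sigma\mathbf{r})\in\Sym^2(\fg)$ is the graded-symmetric part of $\mathbf{r}$, so the symbol of $\rho$ in $\Sym^2(\fg)$ is $-\mathbf{s}$. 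Since $|\rho|=1$, $\tfrac12[\rho,\rho]=\rho^2$ lies in $U_{\leq 3}(\fg)$, and by the odd-step parity of the PBW filtration its nonzero PBW components are confined to $\Sym^3\oplus\Sym^1$; this is precisely the decomposition $\tfrac12[\rho,\rho]=c_\mathbf{r}-W_\mathbf{r}$ stated above.

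To establish centrality, it suffices to show that the $\Sym^3$-class of $W_\mathbf{r}$ is $\fg$-invariant. I would compute the $\Sym^3$-symbol of $[[\rho,\rho],x]$ for $x\in\fg$ in two ways. First, since $[U_{\leq 3},\fg]\subseteq U_{\leq 3}$ and the induced map on $U_{\leq 3}/U_{\leq 2}=\Sym^3$ is (up to a fixed sign) the adjoint $\fg$-action, the $\Sym^3$-class of $[[\rho,\rho],x]$ is a nonzero scalar multiple of $\mathrm{ad}_x(W_\mathbf{r})$. Second, the graded Jacobi identity for the odd element $\rho$ gives $[[\rho,\rho],x]=2[\rho,[\rho,x]]$, and by the defining relation $d_\mathbf{r}(a)=\hbar\nabla\delta_\mathbf{r}(a)$ for $a\in\fg$ one has $[\rho,a]=\nabla\delta_\mathbf{r}(a)=\mathrm{sym}(\delta_\mathbf{r}(a))$. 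Applying the graded Leibniz rule for the degree-one derivation $[\rho,-]$ to $\mathrm{sym}(\delta_\mathbf{r}(x))$ and passing to $\Sym^3$-symbols identifies this second answer with the class of $(\delta_\mathbf{r}\otimes 1+1\otimes\delta_\mathbf{r})\delta_\mathbf{r}(x)\in\Sym^3(\fg)$, which vanishes by the first identity of Proposition \ref{Prop:delta}.

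Comparing yields $\mathrm{ad}_x(W_\mathbf{r})=0$ for all $x\in\fg$, so $W_\mathbf{r}$ is $\fg$-invariant and hence central in $U(\fg)$ under $\mathrm{sym}$. Consequently
\[
d_\mathbf{r}^2 = \Bigl[\tfrac{\hbar^2}{2}[\rho,\rho],-\Bigr] = [\hbar^2(c_\mathbf{r}-W_\mathbf{r}),-] = [\hbar^2 c_\mathbf{r},-],
\]
as claimed. The main technical hurdle I anticipate is Koszul-sign bookkeeping: one must check that the $(-1)^{|\cdot|}$ factors produced by the Leibniz rule for the odd inner derivation $[\rho,-]$ line up exactly with the signs implicit in the paper's convention for the co-Jacobi identity $(\delta_\mathbf{r}\otimes 1+1\otimes\delta_\mathbf{r})\delta_\mathbf{r}=0$. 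Once those conventions are reconciled, the argument reduces to a clean top-symbol identification, with no need to touch the structure constants of $\fg$.
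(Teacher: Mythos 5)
Your proof is correct and follows essentially the same route as the paper: both arguments compute the $\mathrm{Sym}^3(\fg)$-component of $d_\mathbf{r}^2(X)=\hbar^2[c_\mathbf{r}-W_\mathbf{r},X]$, identify it with $(\delta_\mathbf{r}\otimes 1+1\otimes\delta_\mathbf{r})\delta_\mathbf{r}(X)$, and invoke the co-Jacobi identity (the first identity of \eqref{eq:shiftedbialg}) to conclude that $W_\mathbf{r}$ is $\mathrm{ad}$-invariant and hence central. You merely make explicit the PBW-symbol and $\fg$-equivariance bookkeeping that the paper's one-line proof leaves implicit.
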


\begin{proof}
    This follows from the first identity of equation \eqref{eq:shiftedbialg}. Indeed, for any $X\in \fg$, we have
    \be
d^2_\mathbf{r}(X)=\hbar^2 [c_\mathbf{r}-W_\mathbf{r}, X]=\hbar^2[c_\mathbf{r}, X]-\hbar^2 [W_\mathbf{r}, X].
    \ee
    However, the first identity of equation \eqref{eq:shiftedbialg} implies that the image of this element after projecting to $\mathrm{Sym}^3(\fg)$ is zero, and so $[W_\mathbf{r}, X]=0$ for all $X$. 
    
\end{proof}

This $W_\mathbf{r}$ is the desired curvature, making $(U(\fg)\lbb\hbar\rbb,  \hbar^2 W_\mathbf{r})$ a CDGA. However, we haven't been able to show that $W_\mathbf{r}$ is canonical, in the sense that it doesn't depend on the choice of a pair of transverse Lagrangians. We will do so in the next section, together with the construction of the monoidal structure. 

\begin{Rem}\label{Rem:d2odd}

    We don't need to use the fact that $d$ is inner to derive that $d^2\fg\subseteq \fg\oplus \mathrm{Sym}^3(\fg)$. In fact, for any differential $d$ on $U(\fg)$ such that $d\fg\subseteq \mathrm{Sym}^2(\fg)$, it is always true that $d^2\fg\subseteq \fg\oplus \mathrm{Sym}^3(\fg)$. Namely, one can show that after total symmetrization, the $\mathrm{Sym}^2$ part is equal to zero. 
    
\end{Rem}

\subsubsection{Independence of Lagrangians and monoidal structure}

We have constructed a differential $d_\mathbf{r}$ on $U(\fg)\lbb\hbar\rbb$ and a central element $W_{\mathbf r}$ of degree $2$, as well as an element $c_{\mathbf r}\in \fg_2$. The following theorem is at the core of our construction. 

\begin{Thm}\label{Thm:dr=r2} 
    The following identity holds in $U(\fg)\lbb\hbar\rbb\Chtensor U(\fg)\lbb\hbar\rbb$. 
    \be\label{eq:dr=r2}
\hbar (d_\mathbf{r}\otimes 1+1\otimes d_\mathbf{r})(\mathbf{r})=\hbar^2[\mathbf{r}, \mathbf{r}], \qquad  \Delta d_\mathbf{r}=(d_\mathbf{r}\otimes 1+1\otimes d_\mathbf{r}-2\hbar [\mathbf{r}, -])\Delta,
    \ee 
    where $\Delta$ is the symmetric coproduct of $U(\fg)\lbb\hbar\rbb$. 
    
\end{Thm}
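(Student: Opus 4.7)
The plan is to handle the two identities in sequence. The first reduces to the classical Yang--Baxter equation of Proposition~\ref{Prop:1shiftedYB} combined with the shift constraint $|e_i|+|f^i|=1$ on each summand of $\mathbf{r}=\sum_i e_i\otimes f^i$ (with $e_i\in\fh_+$ and $f^i\in\fh_-$); the second follows from a direct expansion of $\Delta(\nabla\mathbf{r})$, using that $\Delta$ is an algebra homomorphism and that the canonical quadratic Casimir $\Omega=\mathbf{r}-\sigma\mathbf{r}$ satisfies $[\Omega,\Delta(x)]=0$.

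For the first identity, since $d_\mathbf{r}$ restricted to $\fg$ equals $\hbar\,\nabla\circ\delta_\mathbf{r}$, applying $d_\mathbf{r}\otimes 1$ to $\mathbf{r}\in\fg\otimes\fg$ produces $\hbar\,\nabla_{12}\bigl((\delta_\mathbf{r}\otimes 1)(\mathbf{r})\bigr)$, where $\nabla_{12}$ denotes multiplication of the first two factors of $U(\fg)^{\otimes 3}$, and similarly for $1\otimes d_\mathbf{r}$ using $\nabla_{23}$. By Proposition~\ref{Prop:1shiftedYB} these become $\hbar\,\nabla_{12}[\mathbf{r}^{13},\mathbf{r}^{23}]$ and $\hbar\,\nabla_{23}[\mathbf{r}^{12},\mathbf{r}^{13}]$. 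Expanding each commutator as a sum of two monomials in $U(\fg)^{\otimes 3}$ and applying the multiplications, the \emph{aligned} contributions of shape $e_ie_j\otimes f^if^j$ produced by $\nabla_{12}(\mathbf{r}^{13}\mathbf{r}^{23})$ and $\nabla_{23}(\mathbf{r}^{12}\mathbf{r}^{13})$ combine into exactly $[\mathbf{r},\mathbf{r}]=2\mathbf{r}\mathbf{r}$, while the remaining \emph{crossed} contributions of shape $e_ie_j\otimes f^jf^i$ appear, after relabelling the summation indices, with coefficients proportional to $(-1)^{|e_i|}+(-1)^{|f^i|}$, which vanish because $|e_i|+|f^i|=1$. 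Multiplying through by $\hbar$ then produces $\hbar^2[\mathbf{r},\mathbf{r}]$ as required.

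For the second identity, I would exploit that $d_\mathbf{r}$ is inner with commutator $-\hbar\nabla\mathbf{r}$, together with the algebra homomorphism property of $\Delta$, to obtain $\Delta\,d_\mathbf{r}(x)=[-\hbar\Delta(\nabla\mathbf{r}),\Delta(x)]$ for all $x\in U(\fg)$. A direct computation of $\Delta(e_if^i)=\Delta(e_i)\Delta(f^i)$ followed by summation gives
\[
\Delta(\nabla\mathbf{r})\;=\;\nabla\mathbf{r}\otimes 1\;+\;1\otimes\nabla\mathbf{r}\;+\;\mathbf{r}\;+\;\sigma\mathbf{r}.
\]
The first two summands produce the $(d_\mathbf{r}\otimes 1+1\otimes d_\mathbf{r})\Delta$ piece of the desired identity. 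Writing $\mathbf{r}+\sigma\mathbf{r}=2\mathbf{r}-\Omega$ then splits the remainder into a $-2\hbar[\mathbf{r},-]\Delta$ contribution and a $[\hbar\Omega,\Delta(-)]$ term; the latter vanishes by the Casimir property $[\Omega,\Delta(x)]=0$ recorded in the lemma just before the theorem (and automatically extended from $\fg$ to $U(\fg)$ by multiplicativity of $\Delta$).

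The principal obstacle is the sign bookkeeping in the first identity. Although it looks formally like a reformulation of the cYBE, the naive simplification (which would relate $[\mathbf{r}^{12},\mathbf{r}^{13}]+[\mathbf{r}^{13},\mathbf{r}^{23}]$ to $-[\mathbf{r}^{12},\mathbf{r}^{23}]$) cannot be applied directly, because $\nabla_{12}$ and $\nabla_{23}$ act on different pairs of slots; the cancellation of the crossed terms is instead genuinely a feature of the $1$-shifted grading through the constraint $|e_i|+|f^i|=1$, and would fail for an ordinary (degree zero) $r$-matrix.
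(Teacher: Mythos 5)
Your proposal is correct and follows essentially the same route as the paper: the first identity via $(\delta_\fg\otimes 1)(\mathbf{r})=[\mathbf{r}^{13},\mathbf{r}^{23}]$, $(1\otimes\delta_\fg)(\mathbf{r})=[\mathbf{r}^{12},\mathbf{r}^{13}]$ followed by the partial multiplications, and the second via innerness of $d_\mathbf{r}$, the computation of the coproduct of the inner element, and $[\Omega,\Delta]=0$. The only differences are cosmetic: you work with $-\nabla\mathbf{r}$ where the paper uses $\rho=-\tfrac12(\nabla\mathbf{r}+\nabla\sigma\mathbf{r})$ (the same derivation, since they differ by the central element $\nabla\Omega$), and you spell out the sign cancellation behind the identity $[\mathbf{r},\mathbf{r}]=\nabla^{12}[\mathbf{r}^{13},\mathbf{r}^{23}]+\nabla^{23}[\mathbf{r}^{12},\mathbf{r}^{13}]$, which the paper asserts as clear; your observation that the crossed terms cancel precisely because $|e_i|+|f^i|=1$ is correct.
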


\begin{proof}
    The first equality of \eqref{eq:dr=r2} is a consequence of \eqref{eq:classicaldr[rr]}. Indeed, using these two equations, we can write
\be
\hbar d_\mathbf{r}\otimes 1(\mathbf{r})=\hbar^2 \nabla^{12}[\mathbf{r}^{13}, \mathbf{r}^{23}], \qquad \hbar 1\otimes d_\mathbf{r} (\mathbf{r})=\hbar^2\nabla^{23}[\mathbf{r}^{12}, \mathbf{r}^{13}].
\ee
Here $\nabla^{ij}$ means applying multiplication on the $i,j$ factor. Moreover, it is clear that
\be
[\mathbf{r}, \mathbf{r}]= \nabla^{12}[\mathbf{r}^{13}, \mathbf{r}^{23}]+ \nabla^{23}[\mathbf{r}^{12}, \mathbf{r}^{13}]. 
\ee
From these two equations one derives the first equality of \eqref{eq:dr=r2}. 

To prove the second equality, note that since $d$ is inner, we have
\be
\Delta d_\mathbf{r}=\Delta [\hbar\rho, -]=\hbar [\Delta (\rho), -]. 
\ee
It is not difficult to see that
\be
 \Delta (\rho)=\rho\otimes 1+1\otimes \rho-  \mathbf{r}-\mathbf{r}^{21}.
\ee
The result now follows from $[\mathbf{r}-\mathbf{r}^{21}, \Delta]=0$. 

\end{proof}

\begin{Cor}\label{Cor:coprodW}
    The element $W_\mathbf{r}$ satisfies
    \be
\Delta W_\mathbf{r}=W_\mathbf{r}\otimes 1+1\otimes W_\mathbf{r}+\Omega^2,
    \ee
    and consequently, $W= W_\mathbf{r}$ is independent of $(\fh_\pm)$. 
\end{Cor}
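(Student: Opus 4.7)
The plan is to identify the desired identity as the curvature-compatibility axiom of a morphism of CDGAs $(\Delta, -\hbar\Omega): (U(\fg)\lbb\hbar\rbb, 0, \hbar^2 W_\mathbf{r}) \to (U(\fg)\lbb\hbar\rbb, 0, \hbar^2 W_\mathbf{r})\Chtensor (U(\fg)\lbb\hbar\rbb, 0, \hbar^2 W_\mathbf{r})$, and to build this morphism as a composition of three morphisms that are easy to verify. Writing $A_0 := (U(\fg)\lbb\hbar\rbb, 0, \hbar^2 W_\mathbf{r})$ and $B_+ := (U(\fg)\lbb\hbar\rbb, d_\mathbf{r}, \hbar^2 c_\mathbf{r})$ (both well-defined CDGAs by Lemma \ref{Lem:d2=c}), I will first check that $(\mathrm{id}, -\hbar\rho): A_0 \to B_+$ is an isomorphism of CDGAs: the differential axiom is automatic because $d_\mathbf{r} = [\hbar\rho, -]$, and the curvature axiom reduces, upon computing $d_\mathbf{r}(-\hbar\rho) = -\hbar[\hbar\rho,\rho] = -2\hbar^2\rho^2$, to the defining identity $W_\mathbf{r} = c_\mathbf{r} - \tfrac12[\rho,\rho] = c_\mathbf{r} - \rho^2$.

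Next, I will observe that Theorem \ref{Thm:dr=r2} is precisely the statement that $(\Delta, -2\hbar\mathbf{r}): B_+ \to B_+\Chtensor B_+$ is a CDGA morphism: the second identity of the theorem is its differential axiom, while the first identity together with $\mathbf{r}^2 = \tfrac12[\mathbf{r},\mathbf{r}]$ (valid since $\mathbf{r}$ is odd) gives its curvature axiom. Taking the tensor square of the first morphism, I obtain a CDGA isomorphism $A_0\Chtensor A_0 \to B_+ \Chtensor B_+$ with MC element $-\hbar\tilde\rho$ where $\tilde\rho := \rho\otimes 1 + 1\otimes\rho$; its inverse has MC element $+\hbar\tilde\rho$. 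Composing the chain
\be
A_0 \xrightarrow{(\mathrm{id}, -\hbar\rho)} B_+ \xrightarrow{(\Delta, -2\hbar\mathbf{r})} B_+\Chtensor B_+ \xrightarrow{(\mathrm{id}^{\otimes 2}, \hbar\tilde\rho)} A_0\Chtensor A_0,
\ee
and using the composition rule $(g, \beta)\circ (f, \alpha) = (g\circ f, \beta + g(\alpha))$ for CDGA morphisms together with the formula $\Delta(\rho) = \tilde\rho - (\mathbf{r} + \mathbf{r}^{21})$ established in the proof of Theorem \ref{Thm:dr=r2}, the composite MC element telescopes to $\hbar\tilde\rho - 2\hbar\mathbf{r} - \hbar\Delta\rho = -\hbar(\mathbf{r} - \mathbf{r}^{21}) = -\hbar\Omega$. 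The resulting composite is the morphism $(\Delta, -\hbar\Omega): A_0 \to A_0\Chtensor A_0$; because the differential on $A_0$ is zero, its curvature axiom reads $\Delta(\hbar^2 W_\mathbf{r}) = \hbar^2(W_\mathbf{r}\otimes 1 + 1\otimes W_\mathbf{r}) + \hbar^2\Omega^2$, which after dividing by $\hbar^2$ is the desired identity.

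For the independence claim, I would observe that the right-hand side $W_\mathbf{r}\otimes 1 + 1\otimes W_\mathbf{r} + \Omega^2$ depends on the Lagrangian splitting only through $W_\mathbf{r}$ itself, since $\Omega$ is the Casimir of $\kappa$ and $\Delta$ is the symmetric coproduct on $U(\fg)$. If $W_\mathbf{r}$ and $W_{\mathbf{r}'}$ arise from two choices of transverse Lagrangians, subtracting the two instances of the identity shows that $W_\mathbf{r} - W_{\mathbf{r}'}$ is primitive in $(U(\fg), \Delta)$ and hence lies in $\fg$; but $W_\mathbf{r} - W_{\mathbf{r}'} \in \mathrm{Sym}^3(\fg)$, and $\mathrm{Sym}^3(\fg)\cap \mathrm{Sym}^1(\fg) = 0$ inside the PBW decomposition $U(\fg) = \bigoplus_n \mathrm{Sym}^n(\fg)$, forcing $W_\mathbf{r} = W_{\mathbf{r}'}$.

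The main obstacle will be the Koszul sign bookkeeping: verifying the three morphism axioms and telescoping the MC elements involves several odd elements ($\rho$, $\mathbf{r}$, $\mathbf{r}^{21}$, $\Omega$, $\tilde\rho$), and both the tensor product of CDGAs and the composition formula introduce further sign factors. None of this is conceptually hard, but each cancellation (e.g., that the cross term $(\alpha\otimes 1)(1\otimes\beta) + (1\otimes\beta)(\alpha\otimes 1)$ vanishes for odd $\alpha,\beta$, or that the pieces $\hbar\tilde\rho - \hbar\tilde\rho$ cancel in the composite MC element) must be tracked carefully. All structural input, however, is already encoded in Theorem \ref{Thm:dr=r2}, the invariance of $\Omega$, and the relation $W_\mathbf{r} = c_\mathbf{r} - \rho^2$.
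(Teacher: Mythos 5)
Your proposal is correct, and it reaches the identity by a different organization of the argument than the paper does. The paper's proof is a direct computation: it expands $\Delta[\rho,\rho]=[\Delta\rho,\Delta\rho]$ using $\Delta(\rho)=\rho\otimes 1+1\otimes\rho-\mathbf{r}-\mathbf{r}^{21}$, invokes $[\Omega,\Delta]=0$ and the first identity of Theorem \ref{Thm:dr=r2} to reduce the cross terms, and lands on $\Delta[\rho,\rho]=[\rho,\rho]\otimes 1+1\otimes[\rho,\rho]-2\Omega^2$, from which the statement follows via $[\rho,\rho]=2c_\mathbf{r}-2W_\mathbf{r}$ and primitivity of $c_\mathbf{r}$. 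You instead package the same inputs (Theorem \ref{Thm:dr=r2}, the formula for $\Delta(\rho)$, and $W_\mathbf{r}=c_\mathbf{r}-\rho^2$) into three CDGA morphisms and read the identity off the curvature axiom of their composite, using that composition of CDGA morphisms preserves that axiom. This is essentially the content of the paper's subsequent Propositions on $(\mathrm{Id},\hbar\rho)$ and $\CD_\mathbf{r}=(\Delta,-2\hbar\mathbf{r})$, run in reverse: what your route buys is that the sign bookkeeping is absorbed once and for all into the (easily checked) closure of CDGA morphisms under composition and tensor product, and it makes manifest why the later Proposition that $(\Delta,\hbar\Omega)$ is a map of CDGAs holds; what the paper's route buys is that it needs no appeal to the categorical formalism and produces the intermediate identity $\Delta[\rho,\rho]=[\rho,\rho]\otimes 1+1\otimes[\rho,\rho]-2\Omega^2$ explicitly. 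Two small remarks: your composite MC element comes out as $-\hbar\Omega$ where the paper's Proposition states $+\hbar\Omega$ (this is immaterial for the Corollary, since only $(\pm\hbar\Omega)^2=\hbar^2\Omega^2$ enters when the differential vanishes, but you may want to reconcile the sign convention for $\Omega=\mathbf{r}-\sigma\mathbf{r}$); and your independence argument at the end is identical to the paper's.
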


\begin{proof}
    Let us compute $\Delta[\rho, \rho]$. Using the fact that $\Delta(\rho)=\rho\otimes 1+1\otimes \rho-\mathbf{r}-\mathbf{r}^{21}$,  we have
    \be
\Delta [\rho, \rho]=[\Delta(\rho), \Delta (\rho)]=[\rho\otimes 1+1\otimes \rho-2\mathbf{r}, \Delta (\rho)].
    \ee
    Here the second identity follows from the fact that $[\Omega, \Delta]=0$. Similarly, we can write
    \be
[\rho\otimes 1+1\otimes \rho-2\mathbf{r}, \Delta (\rho)]=[\rho\otimes 1+1\otimes \rho-2\mathbf{r}, \rho\otimes 1+1\otimes \rho-2\mathbf{r}+\Omega].
    \ee
    The first identity of equation \eqref{eq:dr=r2} implies that
    \be
[\rho\otimes 1+1\otimes \rho-2\mathbf{r}, \rho\otimes 1+1\otimes \rho-2\mathbf{r}]=[\rho, \rho]\otimes 1+1\otimes [\rho, \rho], 
    \ee
    and therefore 
\be
\begin{aligned}
   \Delta[\rho, \rho]& = [\rho\otimes 1+1\otimes \rho-2\mathbf{r}, \rho\otimes 1+1\otimes \rho-2\mathbf{r}+\Omega]\\ &=[\rho, \rho]\otimes 1+1\otimes [\rho, \rho]+[\Delta(\rho)-\Omega,\Omega]\\ &=[\rho, \rho]\otimes 1+1\otimes [\rho, \rho]-2\Omega^2.  
\end{aligned}
\ee
This implies that $\Delta[\rho, \rho]=[\rho, \rho]\otimes 1+1\otimes [\rho, \rho]-2\Omega^2$. Since $[\rho, \rho]=2c_\mathbf{r}-2W_{\mathbf{r}}$ and $c\in\fg$ is primitive, we get
\be
\Delta W_{\mathbf{r}}=W_{\mathbf{r}}\otimes 1+1\otimes W_{\mathbf{r}}+\Omega^2. 
\ee
The fact that $W$ is independent of Lagrangians follow from this, since given two different such $W$, say $W_{\mathbf{r}_i}$ for $i=1, 2$, their difference satisfies
\be
\Delta \lp W_{\mathbf{r}_1}-W_{\mathbf{r}_2}\rp= \lp W_{\mathbf{r}_1}-W_{\mathbf{r}_2}\rp\otimes 1+1\otimes \lp W_{\mathbf{r}_1}-W_{\mathbf{r}_2}\rp,
\ee
and therefore $W_{\mathbf{r}_1}-W_{\mathbf{r}_2}\in \fg$. But since $W\in \mathrm{Sym}^3(\fg)$, their difference must be zero. 

\end{proof}

This corollary not only shows that $W$ is canonically associated to $\fg$ and its bilinear form $\kappa$, it also shows us how to construct a monoidal structure on the category of modules of the curved differential graded algebra $(U(\fg)\lbb\hbar\rbb, \hbar^2 W)$.

\begin{Prop}
    The tuple $\mathcal{D}:=(\Delta, \hbar\Omega)$ is a map of CDGAs
    \be
\mathcal{D}: (U(\fg)\lbb\hbar\rbb, \hbar^2 W)\to (U(\fg)\lbb\hbar\rbb, \hbar^2 W)\Chtensor (U(\fg)\lbb\hbar\rbb, \hbar^2 W), 
    \ee
satisfying $\CD\otimes 1(\CD)=1\otimes \CD (\CD)$ and $\epsilon\otimes 1(\CD)=1\otimes \epsilon (\CD)=(\mathrm{Id}, 0)$, where $\epsilon: U(\fg)\lbb\hbar\rbb\to\Ch$ is the co-unit map. Consequently, the category $(U(\fg)\lbb\hbar\rbb, \hbar^2 W)\Mod$ has the structure of a monoidal category. 

\end{Prop}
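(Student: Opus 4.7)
The plan is to unpack the definition of a CDGA morphism, translate each axiom into a statement about $\Delta$ and $\Omega$, and invoke the results already established; the only substantive input is the $\Omega^2$ correction in Corollary \ref{Cor:coprodW}. The algebra-map axiom holds since $\Delta$ is the symmetric coproduct on $U(\fg)\lbb\hbar\rbb$, and $\hbar\Omega$ is a degree-one element of the target. Since both source and target CDGAs carry zero differential, the differential-compatibility axiom collapses to $[\hbar\Omega, \Delta(a)] = 0$ for all $a$; by the Leibniz rule this reduces to $a \in \fg$, which is the lemma asserting that $\Omega$ is $\Delta$-central (it is the Casimir for $\kappa$). The curvature axiom $\Delta(\hbar^2 W) = \hbar^2 W \otimes 1 + 1 \otimes \hbar^2 W + (\hbar\Omega)^2$ is precisely Corollary \ref{Cor:coprodW}.

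Next, I verify coassociativity $\mathcal{D} \otimes 1(\mathcal{D}) = 1 \otimes \mathcal{D}(\mathcal{D})$ by expanding via the composition rule $(g,\beta)\circ(f,\alpha) = (g\circ f, \beta + g(\alpha))$ recalled in the preliminaries. The algebra components reproduce the standard coassociativity $(\Delta \otimes 1)\Delta = (1 \otimes \Delta)\Delta$. The Maurer--Cartan components require $(\Delta \otimes 1)(\hbar\Omega) + \hbar\Omega \otimes 1 = (1 \otimes \Delta)(\hbar\Omega) + 1 \otimes \hbar\Omega$, which holds because $\Omega \in \fg \otimes \fg$ and $\fg$ is primitive in $U(\fg)$, so both sides equal $\hbar(\Omega_{12} + \Omega_{13} + \Omega_{23})$ in the standard leg-numbered notation. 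For the counit, $(\epsilon \otimes 1) \circ \Delta = \mathrm{Id}$ and $(\epsilon \otimes 1)(\Omega) = 0$ since $\epsilon$ annihilates $\fg$, yielding $(\mathrm{Id}, 0)$; the case $1 \otimes \epsilon$ is symmetric.

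The monoidal structure on $(U(\fg)\lbb\hbar\rbb, \hbar^2 W)\Mod$ then follows from the general principle that a coassociative, counital coalgebra object in the symmetric monoidal category of CDGAs induces a monoidal structure on its category of modules. Explicitly, for modules $(M_i, d_{M_i})$ with $d_{M_i}^2 = \hbar^2 W \cdot -$ for $i=1,2$, the tensor product $M_1 \otimes_{\Ch} M_2$ carries the pullback $A$-action along $\mathcal{D}$ together with twisted differential $d_{M_1} + d_{M_2} + \hbar\Omega \cdot -$; the CDGA-morphism axiom verified above ensures this squares to $\hbar^2 W \cdot -$, coassociativity gives the associator, and the counit supplies the unit object $\Ch$. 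The main obstacle was already absorbed into the proof of Corollary \ref{Cor:coprodW}; what remains here is formal bookkeeping with the composition law for CDGA morphisms.
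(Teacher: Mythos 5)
Your proof is correct and follows essentially the same route as the paper: the CDGA-morphism axioms reduce to the $\Delta$-centrality of $\Omega$ and to Corollary \ref{Cor:coprodW}, coassociativity of the pair reduces to $\Delta\otimes 1(\Omega)=\Omega^{13}+\Omega^{23}$ and $1\otimes\Delta(\Omega)=\Omega^{12}+\Omega^{13}$ via the composition law for CDGA morphisms, and the counit identities are immediate. Your write-up is merely more explicit than the paper's about which axiom each ingredient discharges.
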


\begin{proof}
    The fact that $\CD$ defines a map of CDGAs follows from Corollary \ref{Cor:coprodW}. The co-associativity identity is clear from the co-associativity of $\Delta$, as well as the fact that
    \be
\Delta \otimes 1 (\Omega)=\Omega^{13}+\Omega^{23},\qquad 1\otimes \Delta (\Omega)=\Omega^{12}+\Omega^{13}. 
    \ee
    The identity with co-unit map is clear. 
    
\end{proof}

One can say that $(\CD, \epsilon)$ makes $(U(\fg)\lbb\hbar\rbb, \hbar^2 W)$ into a co-unital co-algebra object in the category of CDGAs. This is enough to define a monoidal structure. Explicitly, given two modules $(M, d_M)$ and $(N, d_N)$, their tensor product is given by
\be
M\Chtensor N, \qquad d=d_M\otimes 1+1\otimes d_N+\hbar \Omega\cdot -. 
\ee
Note that this is not symmetric unless $\hbar=0$ since $\sigma\Omega=-\Omega$. We believe that a braiding does not exist for this category, but we don't know how to prove this. In any case, we propose that $(U(\fg)\lbb\hbar\rbb, \hbar^2 W)$ together with this co-algebra structure is a canonical quantization of the 1-shifted metric Lie algebra structure of $\fg$. 

\subsubsection{An equivalent description using the Lagrangians}\label{subsubsec:QDL}

We now give an equivalent description of $(U(\fg)\lbb\hbar\rbb, \hbar^2 W)$ using the Lagrangians. Fixing the transverse Lagrangians $\{\fh_\pm\}$, we have shown in Lemma \ref{Lem:d2=c} that $U(\fg)\lbb\hbar\rbb$ carries a differential $d_\mathbf{r}=\hbar [\rho, -]$, such that $d^2_\mathbf{r}=\frac{\hbar^2}{2}[[\rho, \rho], -]=[\hbar^2 c_\mathbf{r}, -]$. Clearly $d_\mathbf{r}c_\mathbf{r}=0$ since $d_\mathbf{r}c_\mathbf{r}=\hbar [\rho, c_\mathbf{r}]=\hbar [\rho, c_\mathbf{r}-W]=\frac{\hbar}{2}[\rho, [\rho, \rho]]$. Therefore, the triple $(U(\fg)\lbb\hbar\rbb, d_\mathbf{r}, \hbar^2 c_\mathbf{r})$ defines a CDGA. The following is now clear from the definition. 

\begin{Prop}
    The tuple $(\mathrm{Id}, \hbar\rho)$ gives an isomorphism
    \be
(U(\fg)\lbb\hbar\rbb, \hbar^2 W)\simeq (U(\fg)\lbb\hbar\rbb, d_\mathbf{r}, \hbar^2 c_\mathbf{r}).
    \ee
\end{Prop}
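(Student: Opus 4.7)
The plan is to unpack the claim as a pair of elementary identities and check each one directly. The isomorphism $(\mathrm{Id},\hbar\rho)$ is to be interpreted as a morphism of CDGAs from $(U(\fg)\lbb\hbar\rbb,d_\mathbf{r},\hbar^2 c_\mathbf{r})$ to $(U(\fg)\lbb\hbar\rbb,0,\hbar^2 W)$, with $\hbar\rho$ lying in degree $1$ of the target and the latter CDGA being of the simple ``graded algebra with central curvature'' form recalled in the preliminaries. Since the underlying algebra map is the identity, once the two morphism axioms are verified the result will automatically be an isomorphism, with inverse $(\mathrm{Id},-\hbar\rho)$.

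First I would check the differential compatibility, $\mathrm{Id}(d_\mathbf{r} a)=0\cdot\mathrm{Id}(a)+[\hbar\rho,\mathrm{Id}(a)]$. This reduces to $d_\mathbf{r}(a)=[\hbar\rho,a]$, which is precisely how $d_\mathbf{r}$ was defined at the start of Section \ref{subsubsec:QDL}. So this step is immediate and requires no further work.

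The main substance is the curvature compatibility $\mathrm{Id}(\hbar^2 c_\mathbf{r})=\hbar^2 W+0+(\hbar\rho)^2$, i.e.\ $c_\mathbf{r}=W+\rho^2$ in $U(\fg)\lbb\hbar\rbb$. I would derive this from two ingredients. Since $\mathbf{r}\in\fg\otimes\fg$ has total degree $1$, the element $\rho=-\tfrac12(\nabla\mathbf{r}+\nabla\sigma\mathbf{r})$ is odd, so the graded commutator gives $[\rho,\rho]=2\rho^2$. Combined with the decomposition $\tfrac12[\rho,\rho]=c_\mathbf{r}-W_\mathbf{r}$ used just before Lemma \ref{Lem:d2=c}, and with Corollary \ref{Cor:coprodW} identifying $W_\mathbf{r}$ with the canonical $W$, one obtains $\rho^2=c_\mathbf{r}-W$; substitution then yields $\hbar^2 W+(\hbar\rho)^2=\hbar^2 c_\mathbf{r}$, which is exactly what is required.

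I do not anticipate a genuine obstacle here, since all the nontrivial content has already been packaged into Lemma \ref{Lem:d2=c} and Corollary \ref{Cor:coprodW}. The only minor piece of bookkeeping worth flagging is the parity check that justifies writing $\rho^2=\tfrac12[\rho,\rho]$: once that is out of the way, the proof reduces to the two-line verification above, and invertibility is automatic from $f=\mathrm{Id}$.
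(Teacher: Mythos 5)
Your verification is correct and follows exactly the route the paper intends, which it leaves implicit by declaring the proposition ``clear from the definition'': the differential axiom is the definition $d_\mathbf{r}=[\hbar\rho,-]$, and the curvature axiom reduces to $\rho^2=\tfrac12[\rho,\rho]=c_\mathbf{r}-W$ via Lemma \ref{Lem:d2=c} and Corollary \ref{Cor:coprodW}, with invertibility automatic since the underlying algebra map is the identity. Your reading of the direction of the morphism (from the curved-with-differential presentation to the plain curved algebra) is also the one consistent with the paper's subsequent remark on the induced module functor.
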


\begin{Rem}
    The induced functor from $(\mathrm{Id}, \hbar\rho)$ on the category of modules acts as follows. Given a module $(M, d_M)$ of $(U(\fg)\lbb\hbar\rbb, \hbar^2 W)$, the image of it under the functor is given by $(M, d_M+\hbar \rho\cdot -)$. The curvature is the correct one since $(d_M+\hbar \rho\cdot -)^2=\hbar^2 W+\hbar^2 (c_\mathbf{r}-W)=\hbar^2 c_\mathbf{r}$. 
\end{Rem}

The following is now a quick consequence of equation \eqref{eq:dr=r2}, whose proof we simply omit. 

\begin{Prop}
The tuple $\CD_{\mathbf{r}}=(\Delta, -2\hbar \mathbf{r})$ is a map of CDGAs:
\be
\CD_{\mathbf{r}}: (U(\fg)\lbb\hbar\rbb, d_\mathbf{r}, \hbar^2 c_\mathbf{r})\to (U(\fg)\lbb\hbar\rbb, d_\mathbf{r}, \hbar^2 c_\mathbf{r})\Chtensor (U(\fg)\lbb\hbar\rbb, d_\mathbf{r}, \hbar^2 c_\mathbf{r}),
\ee
satisfying $\CD_{\mathbf{r}}\otimes 1(\CD_{\mathbf{r}})=1\otimes \CD_{\mathbf{r}} (\CD_{\mathbf{r}})$ and $\epsilon\otimes 1(\CD_\mathbf{r})=1\otimes \epsilon (\CD_{\mathbf{r}})=(\mathrm{Id}, 0)$. Moreover, the map $(\mathrm{Id}, \hbar\rho)$ is an isomorphism of co-algebra objects. 
  
\end{Prop}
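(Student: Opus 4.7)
My plan is to verify the four claims of the proposition separately: that $\CD_\mathbf{r}$ satisfies the two CDGA morphism axioms, then its coassociativity, then the counit axiom, and finally its compatibility with $(\mathrm{Id},\hbar\rho)$ as an isomorphism of coalgebra objects.

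For the CDGA morphism axioms, the ``differential axiom'' $\Delta(d_\mathbf{r} a) = (d_\mathbf{r}\otimes 1 + 1\otimes d_\mathbf{r})\Delta(a) + [-2\hbar\mathbf{r}, \Delta(a)]$ is precisely the second equality of Theorem \ref{Thm:dr=r2}. For the ``curvature axiom'' $\Delta(\hbar^2 c_\mathbf{r}) = \hbar^2 c_\mathbf{r}\otimes 1 + 1\otimes \hbar^2 c_\mathbf{r} + (d_\mathbf{r}\otimes 1 + 1\otimes d_\mathbf{r})(-2\hbar\mathbf{r}) + (-2\hbar\mathbf{r})^2$, the LHS equals the first two terms on the RHS since $c_\mathbf{r}\in\fg$ is primitive, so it suffices to show $-2\hbar(d_\mathbf{r}\otimes 1 + 1\otimes d_\mathbf{r})(\mathbf{r}) + 4\hbar^2\mathbf{r}^2 = 0$. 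This follows from the first equality of Theorem \ref{Thm:dr=r2} together with the identity $\mathbf{r}^2 = \tfrac{1}{2}[\mathbf{r},\mathbf{r}]$ (since $\mathbf{r}$ has odd total degree).

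For coassociativity, the underlying algebra maps already agree by coassociativity of $\Delta$, so the task reduces to comparing MC elements via the composition rule $(g\circ f, \beta + g(\alpha))$. Using $(\Delta\otimes 1)\mathbf{r} = \mathbf{r}^{13} + \mathbf{r}^{23}$ and $(1\otimes\Delta)\mathbf{r} = \mathbf{r}^{12} + \mathbf{r}^{13}$ (which hold because $\mathbf{r}\in\fg\otimes\fg$ is a sum of primitives in each factor), both compositions produce the MC element $-2\hbar(\mathbf{r}^{12} + \mathbf{r}^{13} + \mathbf{r}^{23})$. The counit axiom is immediate since $(\epsilon\otimes 1)\mathbf{r}=(1\otimes\epsilon)\mathbf{r}=0$ (as $\epsilon$ kills $\fg$), so the MC element of $(\epsilon\otimes 1)\CD_\mathbf{r}$ vanishes and the map part collapses to $\mathrm{Id}$.

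For the final claim, I would pull back $\CD=(\Delta,\hbar\Omega)$ along the isomorphism $(\mathrm{Id},\hbar\rho)$ of the preceding proposition and compare with $\CD_\mathbf{r}$. By the CDGA composition rule, this reduces to identifying the MC element $\hbar\Omega + \hbar\Delta(\rho)$ (up to the appropriate tensor-product adjustment via $\rho\otimes 1 + 1\otimes \rho$) with $-2\hbar\mathbf{r}$. The key input is the formula $\Delta(\rho) = \rho\otimes 1 + 1\otimes\rho - \mathbf{r} - \sigma\mathbf{r}$ derived inside the proof of Corollary \ref{Cor:coprodW}, combined with $\Omega = \mathbf{r} - \sigma\mathbf{r}$. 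The main obstacle is the bookkeeping: the composition law transports MC elements through $\Delta$, so one must carefully track how $\rho$ and $\mathbf{r}$ rearrange under coproducts and use $[\Omega,\Delta(a)]=0$ wherever it arises. Once the bookkeeping is done, everything collapses to the identities already established in Theorem \ref{Thm:dr=r2} and Corollary \ref{Cor:coprodW}, which is why the paper is content to omit the details.
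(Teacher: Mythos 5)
Your verification of the first four claims is correct and is exactly the ``quick consequence of \eqref{eq:dr=r2}'' that the paper has in mind (the paper omits its proof entirely, so there is no alternative route to compare against): the differential axiom is the second identity of Theorem \ref{Thm:dr=r2}, the curvature axiom follows from primitivity of $c_\mathbf{r}$, the first identity of Theorem \ref{Thm:dr=r2} and $\mathbf{r}^2=\tfrac12[\mathbf{r},\mathbf{r}]$, and coassociativity and counitality reduce to $(\Delta\otimes 1)(\mathbf{r})=\mathbf{r}^{13}+\mathbf{r}^{23}$, $(1\otimes\Delta)(\mathbf{r})=\mathbf{r}^{12}+\mathbf{r}^{13}$ and $(\epsilon\otimes 1)(\mathbf{r})=0$.

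The gap is in the last claim, which is the only part of the proposition that is not an immediate corollary of \eqref{eq:dr=r2}, and which you defer entirely to ``bookkeeping'' that you never carry out. If you do carry it out with the paper's stated conventions, it does not collapse. The morphism $\phi=(\mathrm{Id},\hbar\rho)$ satisfies the CDGA axioms only in the direction $(U(\fg)\lbb\hbar\rbb, d_\mathbf{r},\hbar^2 c_\mathbf{r})\to (U(\fg)\lbb\hbar\rbb,\hbar^2 W)$ (in the other direction the axiom $f(d_A a)=d_Bf(a)+[\alpha,f(a)]$ would force $2[\hbar\rho,a]=0$). Compatibility of coproducts then demands equality of the Maurer--Cartan elements of $\CD\circ\phi$ and $(\phi\otimes\phi)\circ\CD_\mathbf{r}$, i.e.
\be
\hbar\Omega+\Delta(\hbar\rho)\;=\;\hbar(\rho\otimes 1+1\otimes\rho)-2\hbar\mathbf{r}.
\ee
Substituting $\Delta(\rho)=\rho\otimes 1+1\otimes\rho-\mathbf{r}-\sigma\mathbf{r}$ and $\Omega=\mathbf{r}-\sigma\mathbf{r}$, the left-hand side equals $\hbar(\rho\otimes 1+1\otimes\rho)-2\hbar\sigma\mathbf{r}$, so the two sides differ by $2\hbar\Omega$. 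Since $[\Omega,\Delta(a)]=0$ and $\Omega^2=(-\Omega)^2$, both $(\Delta,\hbar\Omega)$ and $(\Delta,-\hbar\Omega)$ are legitimate coproducts for $(U(\fg)\lbb\hbar\rbb,\hbar^2 W)$, but only the choice $(\Delta,-\hbar\Omega)$ is intertwined with $(\Delta,-2\hbar\mathbf{r})$ by $(\mathrm{Id},\hbar\rho)$; with the paper's choice $(\Delta,+\hbar\Omega)$ one instead intertwines $(\Delta,-2\hbar\sigma\mathbf{r})$. These are genuinely different morphisms -- on modules the induced tensor differentials differ by the nonzero operator $2\hbar\Omega\cdot{-}$ -- so the discrepancy cannot be waved away. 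Your write-up asserts that ``everything collapses to the identities already established,'' but the collapse fails by a sign that must either be fixed (replace $\CD$ by $(\Delta,-\hbar\Omega)$, or equivalently reverse the sign convention for $\Omega$) or explicitly flagged; as written, the final claim is unproved.
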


The monoidal structure induced by $\CD_{\mathbf{r}}$ sends two modules $(M, d_M)$ and $(N, d_N)$ to
\be
M\Chtensor N, \qquad d=d_M\otimes 1+1\otimes d_N-2\hbar \mathbf{r}\cdot-. 
\ee
Again this structure is not symmetric monoidal because $\mathbf{r}$ is not symmetric. With the choice of the Lagrangians, the 1-shifted $r$-matrix enters into the definition of the coproduct, in comparison to the case of ordinary $r$-matrix, which enters into the definition of the braiding.

\subsection{Quantization of the Lagrangians}\label{subsec:quantization}

In this section, we consider the quantization of the Lagrangians $\fh_\pm$ . However, we will put the following assumption on the 1-shifted double $\fg$. 

\begin{Asp}\label{Asp:g2=0}
    We assume that $\fg_2=0$, or equivalently $\fg_{-1}=0$. 
\end{Asp}

\begin{Rem}
    This is obviously satisfied by $\fg=T^*[-1]\fh$ for some $\fh$ with $\fh_{-1}=0=\fh_2$. In fact for any $\fg$ satisfying $\fg_2=0$, it contains a subalgebra $T^*[-1]\fg_0$. 
\end{Rem}

In this case, we present the following theorem. 

\begin{Thm}\label{Thm:QDGAL}
    Let $(\fg, \fh_\pm)$ be a 1-shifted Manin-triple and assume \ref{Asp:g2=0}. The following statements are true. 
    
    \begin{itemize}
        \item The DG algebra $U_\hbar (\fg):= (U(\fg)\lbb\hbar\rbb, d_\mathbf{r})$ of Section \ref{subsubsec:QDL} contains two DG subalgebras $U_\hbar (\fh_\pm)$, generated by $\fh_\pm$ over $\Ch$. 

        \item The DG category $U_\hbar (\fh_+)\Mod$ (resp. $U_\hbar (\fh_-)\Mod$) is a right (resp. left) module category of $U_\hbar (\fg)\Mod$. 

        \item Let $U_\hbar (\hbar\fh_-)$ be the subalgebra of $U_\hbar (\fh_-)$ generated by $\hbar \fh_-$ over $\Ch$. The algebras $U_\hbar (\fh_+)$ and $U_\hbar (\hbar \fh_-)$ are Koszul dual to each other over $\Ch$. 
    \end{itemize}
\end{Thm}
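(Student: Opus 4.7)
My plan is to address the three claims in sequence, with the Koszul duality being the main obstacle.

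For the first claim, I compute the differential $d_\mathbf{r}$ on generators $X \in \fh_+$: Proposition \ref{Prop:delta} shows that $\delta_\fg$ restricted to $\fh_+$ equals $\delta_{\fh_+}: \fh_+ \to \mathrm{Sym}^2(\fh_+)[1]$, so $d_\mathbf{r}(X) = \hbar\nabla\delta_{\fh_+}(X) \in U(\fh_+)\lbb\hbar\rbb$. Since $d_\mathbf{r}$ is a derivation, this extends to preserve the entire subalgebra generated by $\fh_+$; the same argument works for $\fh_-$. Assumption \ref{Asp:g2=0} forces the degree-$2$ element $c_\mathbf{r} \in \fg_2$ to vanish, so Lemma \ref{Lem:d2=c} yields $d_\mathbf{r}^2 = 0$ and the $U_\hbar(\fh_\pm)$ are honest DG algebras.

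For the second claim, the key observation is that the coproduct $\CD_\mathbf{r} = (\Delta, -2\hbar\mathbf{r})$ of Section \ref{subsubsec:QDL} restricts to a map $U_\hbar(\fh_+) \to U_\hbar(\fh_+) \Chtensor U_\hbar(\fg)$: indeed $\Delta(X) = X \otimes 1 + 1 \otimes X$ preserves this subspace via $\fh_+ \subset \fg$, and $\mathbf{r} = \sum x_a \otimes \epsilon^a$ already lies in $\fh_+ \otimes \fh_- \subset U_\hbar(\fh_+) \otimes U_\hbar(\fg)$. The coassociativity and counitality of $\CD_\mathbf{r}$ then descend to give the right module category action: for $M \in U_\hbar(\fh_+)\Mod$ and $N \in U_\hbar(\fg)\Mod$, the product $M \otimes_{\Ch} N$ carries the differential $d_M \otimes 1 + 1 \otimes d_N - 2\hbar\mathbf{r}\cdot -$, which is well-defined because the twist only requires $N$ to be a $U_\hbar(\fg)$-module. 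The symmetric construction with $\mathbf{r}$ viewed as lying in $U_\hbar(\fg) \otimes U_\hbar(\fh_-)$ via its second tensor factor gives the left module category structure for $U_\hbar(\fh_-)\Mod$.

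For the Koszul duality, the starting point is the classical duality between $U(\fh_+)$ and $\mathrm{Sym}(\fh_+^*[-1])$ (i.e.\ the Chevalley-Eilenberg cochain complex of $\fh_+$), which is recovered at $\hbar = 0$: the brackets in $U_\hbar(\hbar\fh_-)$ scale as $\hbar^2$, so modulo $\hbar$ the algebra degenerates to the commutative $\mathrm{Sym}(\fh_-) = \mathrm{Sym}(\fh_+^*[-1])$ with the surviving Chevalley-Eilenberg differential coming from $d_\mathbf{r}$. The strategy is to construct an $\hbar$-deformed Koszul bimodule
\be
K_\hbar := U_\hbar(\fh_+) \Chtensor U_\hbar(\hbar\fh_-)
\ee
whose differential combines $d_\mathbf{r}$ on each tensor factor with a twist by $\hbar\mathbf{r}$, reinterpreted as an element of $\fh_+ \otimes \hbar\fh_- \subseteq U_\hbar(\fh_+) \otimes U_\hbar(\hbar\fh_-)$ (absorbing one power of $\hbar$). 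That this total differential squares to zero is precisely the Maurer-Cartan property of $\hbar\mathbf{r}$, which follows from Theorem \ref{Thm:dr=r2} and the Yang-Baxter equation of Proposition \ref{Prop:1shiftedYB}. Koszul duality then reduces to showing that $K_\hbar$ resolves $\Ch$ both as a left $U_\hbar(\fh_+)$-module and as a right $U_\hbar(\hbar\fh_-)$-module, and that the natural maps from each algebra to the endomorphism DG algebra of $K_\hbar$ viewed over the other are quasi-isomorphisms. The principal technical hurdle is the acyclicity statement, which I expect to establish by running an $\hbar$-adic spectral sequence whose $E_0$-page is the classical Chevalley-Eilenberg resolution of $\C$ over $U(\fh_+)$; strong convergence should follow from graded finite-dimensionality of $\fg$ together with $\hbar$-adic completeness, after which the identification of endomorphism algebras reduces to a PBW-style comparison of associated graded pieces.
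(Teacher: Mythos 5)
Your treatment of the first two bullets matches the paper's: the differential preserves $U(\fh_\pm)\lbb\hbar\rbb$ because $\delta_\fg$ restricts to $\delta_{\fh_\pm}$, it squares to zero because $\fg_2=0$ forces $c_\mathbf{r}=0$, and the module category actions come from restricting $\CD_\mathbf{r}=(\Delta,-2\hbar\mathbf{r})$ using $\mathbf{r}\in\fh_+\otimes\fh_-$. For the Koszul duality your mechanism is also the paper's --- the polarized $r$-matrix as a Maurer--Cartan element on $U_\hbar(\fh_+)\Chtensor U_\hbar(\hbar\fh_-)$, plus an $\hbar$-adic spectral sequence whose associated graded is the classical Chevalley--Eilenberg resolution of $\C$ over $U(\fh_+)$ --- but the packaging differs. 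The paper states the duality as a quasi-isomorphism of coalgebras $\mathrm{Sym}(\fh_+[1])\lbb\hbar\rbb\cong B(U_\hbar(\fh_+))$, where the left-hand side is the graded dual of $U_\hbar(\hbar\fh_-)$; the MC element is then a twisting morphism, hence induces a coalgebra map to the bar construction, and the only thing to check is acyclicity of the single twisted complex $\mathrm{Sym}(\fh_+[1])\lbb\hbar\rbb\Chtensor^\alpha U_\hbar(\fh_+)$. Your bimodule formulation needs, in addition, the identification of each algebra with the endomorphism DG algebra of $K_\hbar$ over the other, which you only gesture at via a ``PBW-style comparison''; the bar-construction route sidesteps that step entirely through the universal property of $B(A)$. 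Two small corrections: the MC element is $\alpha=-2\hbar\mathbf{r}$, not $\hbar\mathbf{r}$ (with the normalization $d\mathbf{r}=\hbar[\mathbf{r},\mathbf{r}]=2\hbar\mathbf{r}^2$ from Theorem \ref{Thm:dr=r2}, one gets $d(\hbar\mathbf{r})+(\hbar\mathbf{r})^2=3\hbar^2\mathbf{r}^2\ne 0$, so your element is not Maurer--Cartan and the coefficient must be $-2$); and the reason the duality is stated against $U_\hbar(\hbar\fh_-)$ rather than $U_\hbar(\fh_-)$ is that only the former has a graded dual that is an honest (uncompleted) coalgebra, thanks to the extra $\hbar$ in the pairing --- a point your sketch should make explicit before taking duals.
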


\begin{Rem}
    We could have simply required that $c_{\mathbf r}=0$, but for all applications we care about, $\fg_2=0$, \textit{cf.} Section \ref{sec:meromorphic}. 
\end{Rem}

The precise meaning of the Koszul duality of the last point will be discussed in Section \ref{subsec:Koszul}. The proof of this Koszul duality (Proposition \ref{Prop:Koszul}) shows that $U_\hbar (\hbar \fh_-)$ is a deformation of the Chevalley-Eilenberg cochain complex of $\fh_+$, and the associated Poisson structure of this deformation is precisely the cobracket on $\fh_+$. This is another reason why we call the algebras $U_\hbar (\fh_\pm)$ quantizations. The rest of this section focuses on proving these statements.

\subsubsection{DG algebras and monoidal action}\label{subsubsec:monoidalact}

The construction of the DG algebras are now straighforward. With the assumption $\fg_2=0$, we must have $c_{\mathbf{r}}=0$. Consequently $U(\fg)\lbb\hbar\rbb$ admits a square-zero differential $d_\mathbf{r}$. Since $d_\mathbf{r}$ maps $\fh_\pm$ to $\mathrm{Sym}^2(\fh_\pm)$, it restricts to a square-zero differential on the subalgebras $U(\fh_\pm)\lbb\hbar\rbb$, supplying the DG algebras $U_\hbar (\fh_\pm)$. 

To define the monoidal action, we notice that since $\mathbf{r}\in \fh_+\otimes \fh_-$, the map $\CD_{\mathbf{r}}= (\Delta, -2\hbar \mathbf{r})$ has a well-defined restriction
\be
\CD_{\mathbf{r}}^+: U_\hbar (\fh_+)\to U_\hbar (\fh_+)\Chtensor U_\hbar (\fg), \qquad \CD_{\mathbf{r}}^-: U_\hbar (\fh_-)\to U_\hbar (\fg)\Chtensor U_\hbar (\fh_-). 
\ee
These satisfy $\CD_{\mathbf{r}}^\pm\otimes 1(\CD_{\mathbf{r}}^\pm)=1\otimes \CD_{\mathbf{r}}(\CD_{\mathbf{r}}^\pm)$ and $\epsilon\otimes 1(\CD_{\mathbf{r}}^\pm)=1\otimes \epsilon (\CD_{\mathbf{r}}^\pm)=(\mathrm{Id}, 0)$. Therefore they induce the monoidal actions of $U_\hbar (\fg)\Mod$ on $U_\hbar (\fh_\pm)\Mod$ respectively.

\subsubsection{Koszul duality from polarization}\label{subsec:Koszul}

We now show that the algebras $U_\hbar (\fh_+)$ and $ U_\hbar (\hbar\fh_-)$ are Koszul dual to each other. We will first discuss in what sense are these two algebras Koszul dual to each other. For this, we briefly recall the theory of Koszul duality for augmented algebras and coalgebas. For a detailed discussion of the constructions to appear below, see \cite{loday2012algebraic}. 

Let $A$ be a DG algebra over $\C$ with augmentation $\epsilon: A\to \C$. We denote by $B(A)$ the bar construction of $A$, which is given by
\be
B(A)=T^c (s \overline{A}),\qquad d=d_1+d_2.
\ee
Here $\overline{A}=\mathrm{Ker}(\epsilon)$, $s$ is the suspension functor, $T^c$ denotes the co-free tensor co-algebra generated by $s\overline{A}$, $d_1$ is generated by the differential on $A$ and $d_2$ is the Hochschild differential. 

Similarly, given $C$ a DG co-algebra with unit $1\to C$, one can construct the cobar construction $\Omega(C)$, which is given by
\be
\Omega(C)=T (s^{-1} \overline{C}),\qquad d=d_1+d_2.
\ee
Here $\overline{C}=C/1$, and $T$ denotes the free tensor algebra generated by $s^{-1} \overline{C}$. The elements $d_1$ and $d_2$ are similarly defined as above. 

Let $C$ be a DG co-algebra and $A$ a DG algebra, consider the vector space $\Hom (C, A)$. This has the structure of a DG algebra given by
\be
f* g:= \mu \circ (f\otimes g)\circ \Delta
\ee
where $\mu$ is the product for $A$ and $\Delta$ is the coproduct for $C$. If $C$ is a perfect complex in $\mathrm{Vect}$, then we can alternatively identify this with the product algebra $C^*\otimes A$. This DG algebra is called the convolution algebra of $C$ with $A$, and we denote it by $\{C, A\}$. 

A Maurer-Cartan element in a DG algebra $A$ is a degree $1$ element $\alpha\in A$ such that
\be
d_A \alpha+ \alpha^2=0.
\ee

\begin{Def}
    A twisting morphism in $\Hom (C, A)$ is a Maurer-Cartan element in the DG algebra $\{C, A\}$. The set of twisting morphisms is denoted by $\mathrm{Tw}(C, A)$. 
    
\end{Def}

The relation between $\mathrm{Tw}(C, A)$ and bar-cobar constructions is given by the following adjunction property. 

\begin{Prop}
    There are natural bijections
    \be
\Hom_{\rm Alg} \lp  \Omega (C), A\rp\cong \mathrm{Tw}(C, A)\cong \Hom_{\rm coAlg} (C, B (A)). 
    \ee
\end{Prop}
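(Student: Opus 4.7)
The plan is to exhibit explicit bijections in both directions and to reduce each of them to the universal property of the free algebra $\Omega(C)$ and the cofree coalgebra $B(A)$. The same auxiliary datum---a degree $+1$ linear map $\alpha : C \to A$ vanishing on the (co)unit---will parametrize all three sets, and in each case the required compatibility with differentials will translate into the Maurer--Cartan equation in the convolution algebra $\{C,A\}$.

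For the first bijection, note that $\Omega(C) = T(s^{-1}\overline{C})$ is free as a graded associative algebra on $s^{-1}\overline{C}$, so any graded algebra map $\phi : \Omega(C) \to A$ is uniquely determined by its restriction to the generators. Via desuspension, this restriction is the same datum as a degree $+1$ map $\alpha : \overline{C} \to A$, which I extend by zero on the counit. The differential on $\Omega(C)$ decomposes as $d_1 + d_2$, where $d_1$ is induced by the differentials on $C$ and $A$, and $d_2$ is the quadratic piece coming from $\Delta_C$. Imposing $\phi \circ (d_1 + d_2) = d_A \circ \phi$ on a generator $s^{-1}x$ collapses, by multiplicativity of $\phi$, to the condition
\be
d_A \alpha(x) + \mu_A \circ (\alpha \otimes \alpha) \circ \Delta_C(x) = 0,
\ee
which is precisely the MC equation $d\alpha + \alpha * \alpha = 0$ in $\{C,A\}$. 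Conversely, any twisting morphism $\alpha$ extends uniquely to a graded algebra map out of $\Omega(C)$ by freeness, and the same calculation shows that this extension is a chain map.

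The second bijection is strictly dual. Since $B(A) = T^c(s\overline{A})$ is cofree as a coaugmented graded coalgebra on $s\overline{A}$, any coalgebra map $\psi : C \to B(A)$ is uniquely determined by its projection $p \circ \psi : C \to s\overline{A}$ onto the cogenerators. Via suspension, this projection is the same datum as a degree $+1$ map $\alpha : C \to A$ vanishing on $1$, and the dual calculation identifies the chain-map condition on $\psi$ with the same Maurer--Cartan equation for $\alpha$. Composing the two bijections realises the standard bar--cobar adjunction between augmented DG algebras and coaugmented DG coalgebras.

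The main obstacle will be the bookkeeping of signs: the suspension and desuspension isomorphisms, the Koszul sign rule, and the signs conventionally inserted in the definitions of $d_1, d_2$ and of the convolution product $\alpha * \alpha$ must all be chosen consistently so that the two translations produce the same sign in front of $\alpha * \alpha$. With such a consistent choice---standard in references like \cite{loday2012algebraic}---all three sets are canonically identified through the single parameter $\alpha$, and the adjunction structure emerges automatically.
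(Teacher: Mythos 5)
The paper states this proposition without proof, as a standard recollection from \cite{loday2012algebraic}, and your argument is exactly the standard one from that reference: reduce algebra maps out of the free algebra $\Omega(C)$ and coalgebra maps into the cofree coalgebra $B(A)$ to their values on (co)generators, and identify the chain-map condition in each case with the Maurer--Cartan equation in the convolution algebra $\{C,A\}$. Your proof is correct; the only point worth flagging is that $T^c(s\overline{A})$ is cofree only among \emph{conilpotent} coaugmented coalgebras, so the second bijection implicitly requires $C$ to be conilpotent (a hypothesis the paper also leaves tacit), and your displayed MC equation should include the $\alpha\circ d_C$ term as part of the convolution differential, as your closing remark on sign conventions already anticipates.
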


What this statement means is that $B(A)$ is the universal coalgebra classifying twist constructions. Indeed, there exists a universal twisting element
\be
\alpha_{univ}\in \mathrm{Tw}(B(A), A)
\ee
given by the identity algebra homomorphism on $B(A)$. For any other coalgebra $C$ with a twist $\alpha\in \mathrm{Tw}(C, A)$, there exists a unique morphism of DG algebras $f_\alpha: C\to B(A)$ such that $\alpha= \alpha_{univ}\circ f_\alpha$. The twisted tensor product $B(A)\otimes^\alpha A$, where the differential is deformed by $\alpha$, is a resolution of the trivial module $\C$ of $A$. The Koszul dual of $A$, if exists, must be given by an appropriate dual of $B(A)$. 

The above can all be generalized to algebras and coalgebras over $\C\lbb\hbar\rbb$, with which we now explain the meaning of the Koszul duality. The algebra $U_\hbar (\fh_+)$ is not finite-dimensional over $\Ch$ and so it does not make sense to talk about the linear dual of $B(U_\hbar (\fh_+))$. The algebra $U_\hbar (\hbar \fh_-)$ is also not finite-dimensional so it also does not make sense to take its dual. However, it does have a natural graded dual, which is given by $\mathrm{Sym}(\fh_+[1])\lbb\hbar\rbb$. Here we use the pairing between $\fh_-$ and $\fh_+[1]$ so that $(x_a, \hbar \epsilon^a)=1$. It is easy to see that the algebra structure of $U_\hbar (\hbar \fh_-)$ induces a coalgebra structure on $\mathrm{Sym}(\fh_+[1])\lbb\hbar\rbb$. This coalgebra structure does not need a completion of the symmetric powers of $\fh_+[1]$ thanks to the extra $\hbar$ in the pairing. We claim the following statement, which is the content of the Koszul duality statement of Thereom \ref{Thm:QDGAL}. 

\begin{Prop}\label{Prop:Koszul}
    There is a quasi-isomorphism of coalgebras
    \be
\mathrm{Sym}(\fh_+[1])\lbb\hbar\rbb\cong B(U_\hbar (\fh_+)).
    \ee
    
\end{Prop}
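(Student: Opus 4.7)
The plan is to apply the standard bar/cobar adjunction $\mathrm{Tw}(C,A)\cong \Hom_{\mathrm{coAlg}}(C, B(A))$: exhibit a natural twisting morphism $\alpha: \mathrm{Sym}(\fh_+[1])\lbb\hbar\rbb \to U_\hbar(\fh_+)$, use the adjunction to produce the desired coalgebra map, and then show this map is a quasi-isomorphism by an $\hbar$-adic filtration argument that reduces to the classical Koszul duality between $U(\fh_+)$ and $\mathrm{CE}_*(\fh_+)$.

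First I would unpack the dg coalgebra structure on $\mathrm{Sym}(\fh_+[1])\lbb\hbar\rbb$ induced by graded duality from $U_\hbar(\hbar\fh_-)$. The PBW identity
\be
(\hbar\epsilon^b)(\hbar\epsilon^c) = (\hbar\epsilon^b)\cdot_{\mathrm{sym}}(\hbar\epsilon^c) + \tfrac{\hbar}{2}\, g^{bc}_a\,(\hbar\epsilon^a),
\ee
with $g^{bc}_a$ the structure constants of the bracket on $\fh_-$, dualises under $\langle \hbar\epsilon^a, sx_b\rangle = \delta^a_b$ to a coproduct $\Delta^*$ whose $\mathrm{Sym}^1\otimes\mathrm{Sym}^1$ component on a primitive generator $sx_a$ equals $\tfrac{\hbar}{2}\, g^{bc}_a\, sx_b\otimes sx_c$, i.e.\ a shifted incarnation of the cobracket $\delta_+$. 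Similarly, $d_\mathbf{r}|_{U_\hbar(\hbar\fh_-)}$ sends each generator $\hbar\epsilon^a$ into $\mathrm{Sym}^2(\hbar\fh_-)$ with coefficients given (up to signs) by the structure constants of the bracket of $\fh_+$; its graded dual $d^*$ is therefore an $\hbar$-independent Chevalley--Eilenberg-type differential lowering $\mathrm{Sym}$-degree by one and encoding the bracket of $\fh_+$.

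Next, mirroring the Maurer--Cartan element $-2\hbar\mathbf{r}$ of Theorem \ref{Thm:dr=r2}, I would set
\be
\alpha(sX) = c X \text{ for } X\in \fh_+,\qquad \alpha|_{\mathrm{Sym}^{\geq 2}} = 0,
\ee
with $c\in\C$ fixed by the Maurer--Cartan equation $d\alpha + \alpha\star\alpha = 0$. On $\mathrm{Sym}^1$ this equation reads $c\,\hbar\,\nabla\delta_+(X) + c^2\cdot\tfrac{\hbar}{2}\,\nabla\delta_+(X) = 0$, forcing $c = -2$, in agreement with the polarised restriction of $-2\hbar\mathbf{r}$. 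On $\mathrm{Sym}^2$ it becomes the identification of the graded commutator $XY - (-1)^{|X||Y|}YX$ in $U_\hbar(\fh_+)$ with $[X,Y]_{\fh_+}$, exactly the bracket compatibility of the classical Koszul morphism. On $\mathrm{Sym}^{\geq 3}$ it reduces to Jacobi for $\fh_+$ together with the co-Jacobi identity for $\delta_+$, both encapsulated in Proposition \ref{Prop:delta}. The adjunction then yields a dg coalgebra morphism
\be
\phi_\alpha: \mathrm{Sym}(\fh_+[1])\lbb\hbar\rbb \longrightarrow B(U_\hbar(\fh_+)).
\ee

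Finally, I would show $\phi_\alpha$ is a quasi-isomorphism via the $\hbar$-adic filtration. Both sides are $\Ch$-flat and $\hbar$-adically complete, so their associated spectral sequences converge. At $\hbar = 0$, the map becomes the classical Koszul quasi-isomorphism $\mathrm{CE}_*(\fh_+) = \mathrm{Sym}(\fh_+[1]) \to B(U(\fh_+))$ (see e.g.\ \cite{loday2012algebraic}), and a standard Nakayama-type comparison for complete flat $\Ch$-modules lifts this to a quasi-isomorphism over $\Ch$. The main technical obstacle is the Maurer--Cartan verification on $\mathrm{Sym}^{\geq 3}$: the coproduct $\Delta^*$ inherits further $\hbar$-corrections from higher-order PBW tails of $U_\hbar(\hbar\fh_-)$, and these must be shown to reassemble, via the Jacobi and co-Jacobi identities of Proposition \ref{Prop:delta}, into the deformed differential on $U_\hbar(\fh_+)$, rather than producing any obstruction; the normalisation $c=-2$, forced by the sym-$1$ cancellation, is crucial throughout.
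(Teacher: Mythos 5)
Your proposal is correct and follows essentially the same route as the paper: the twisting morphism is the polarized $r$-matrix $-2\hbar\mathbf{r}$ (your normalization $c=-2$ agrees), the coalgebra map comes from the bar--cobar adjunction, and the quasi-isomorphism is established by an $\hbar$-adic filtration reducing to classical Koszul duality for $U(\fh_+)$. The only differences are cosmetic: the paper obtains the Maurer--Cartan equation in all symmetric degrees at once from the already-proven identity $\hbar(d_\mathbf{r}\otimes 1+1\otimes d_\mathbf{r})(\mathbf{r})=\hbar^2[\mathbf{r},\mathbf{r}]$ (itself a consequence of the Yang--Baxter equation), rather than verifying it componentwise as you flag in your last paragraph, and it runs the spectral sequence on the twisted tensor product $\mathrm{Sym}(\fh_+[1])\lbb\hbar\rbb\Chtensor^\alpha U_\hbar(\fh_+)$, showing it resolves $\Ch$, rather than filtering the map into the bar construction directly.
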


The proof of this will occupy the rest of this section. 

The trick here is the polarization of the differential $d_\mathbf{r}$. Let us consider the product algebra
\be
\wt U:= U_\hbar (\fh_+)\Chtensor U_\hbar (\hbar \fh_-)
\ee
with the differential $d_\mathbf{r}\otimes 1+1\otimes d_\mathbf{r}$. What we have shown is that the element $\alpha:= -2\hbar \mathbf{r}$ is a Maurer-Cartan element in this DG algebra. Using the duality between $ U_\hbar (\hbar \fh_-)$ and $\mathrm{Sym}(\fh_+[1])\lbb\hbar\rbb$, it is not difficult to see that this same $\alpha$ gives rise to a twisting homomorphism
\be
\alpha\in \mathrm{Tw} \lp \mathrm{Sym}(\fh_+[1])\lbb\hbar\rbb,  U_\hbar (\fh_+) \rp,
\ee
and consequently a coalgebra homomorphism
\be
f_\alpha: \mathrm{Sym}(\fh_+[1])\lbb\hbar\rbb\to B (  U_\hbar (\fh_+)).
\ee
We are left to show that this is a quasi-isomorphism. 

To do so, note that $f_\alpha$ induces a map of $ U_\hbar (\fh_+)$ modules
\be\label{eq:falphatwist}
\mathrm{Sym}(\fh[1])\lbb\hbar\rbb\Chtensor^\alpha  U_\hbar (\fh_+) \to B ( U_\hbar (\fh))\Chtensor^{\alpha_{univ}}  U_\hbar (\fh_+)
\ee
where $\Chtensor^\alpha$ means the tensor product twisted by the MC element $\alpha$. We just need to show that this induced map is a quasi-isomorphism. Indeed, if this is the case, then we find
\be
\mathrm{Sym}(\fh_+[1])\lbb\hbar\rbb\cong  \lp \mathrm{Sym}(\fh_+[1])\lbb\hbar\rbb\Chtensor^\alpha U_\hbar (\fh_+)\rp\otimes_{U_\hbar (\fh_+)}\Ch\cong B ( U_\hbar (\fh_+))=\mathrm{Tor}_{U_\hbar (\fh_+)}(\Ch). 
\ee
As we have mentioned, the complex $B ( U_\hbar (\fh_+))\Chtensor^{\alpha_{univ}} U_\hbar (\fh_+)$ is a resolution of $\Ch$, so we only need to show that the LHS of \eqref{eq:falphatwist} also resolves $\Ch$. We will use a spectral sequence argument. Let $C$ be this chain complex. Of course there are natural morphisms $\Ch\to C$ and $C\to \Ch$. The complex $C$ is equipped with a filtration
\be
C\supseteq \hbar C\supseteq \hbar^2 C\supseteq \cdots.
\ee
Thinking of this as a complex over $\C$, we obtain a spectral sequence, whose $E_1$ page is
\be
E_1^{p, q}=H^{p+q}(\mathrm{Gr}(C)).
\ee
After taking associated graded, the differential $1\otimes d$ drops off (since it has an $\hbar$ term), and $\alpha$ becomes $\sum \iota_{x_a}\otimes x_a$ (since the commutator of $U(\hbar \fh_-)$ has $\hbar$-term). A moment of consideration shows that $\mathrm{Gr}(C)$ is precisely the Chevalley-Eilenberg complex of $U(\fh_+)$ resolving $\C$. Therefore $E_1^{p, q}=\C$ for $p=-q$ otherwise it is zero. In particular, the complex terminates in this page and the full cohomology of $C$ is quasi-isomorphic to $E_1^{p,-p}$ as a vector space over $\C$. Now we can finish the proof since the clearly the associated $E_1$ page of the map $\Ch\to C$ is a quasi-isomorphism, implying that the map $\Ch\to C$ is a quasi-isomorphism. The proof of Proposition \ref{Prop:Koszul} is now complete.

\subsubsection{When the assumption is not satisfied}\label{subsec:failasp}

We give a quick discussion when $\fg_2\ne 0$. In this case, we have seen that there is an element $c_\mathbf{r}\in \fg_2$ such that $d^2=[c_\mathbf{r}, -]$. We can decompose $c_\mathbf{r}=c_\mathbf{r}^++c_\mathbf{r}^-$ for $c_\mathbf{r}^\pm\in \fh_\pm$. The fact that $c_\mathbf{r}$ preserves $\fh_\pm$ implies that
\be
[c^+_\mathbf{r}, \fh_-]\subseteq \fh_-, \qquad [c^-_\mathbf{r}, \fh_+]\subseteq \fh_+. 
\ee
Giving any $X, Y\in \fh_-$, we have
\be
0=\kappa ([c_\mathbf{r}, X], Y)=\pm \kappa (c_\mathbf{r}, [X, Y]). 
\ee
In particular, $\kappa (c_\mathbf{r}, -)$ defines Lie-algebra homomorphisms from $\fh_\pm\to \C$, which must be non-trivial. Let $\fh_\pm^c\subseteq \fh_\pm$ be the kernel of this map, which is an ideal. We claim now that $[c_{\mathbf{r}}^-, \fh_+]\subseteq \fh_+^c$ and $[c_{\mathbf{r}}^-, c_{\mathbf{r}}^-]=0$. The first statement is true because for any $X\in \fh_+$, we have
\be
\kappa (c_\mathbf{r}, [c_\mathbf{r}^-, X])=\kappa (c_\mathbf{r}, [c_\mathbf{r}, X])- \kappa (c_\mathbf{r}, [c_\mathbf{r}^+, X]).
\ee
The second term is zero since any commutator belong to $\fh_+^c$, and the first term is zero because
\be
\kappa (c_\mathbf{r}, [c_\mathbf{r}, X])=\pm \kappa ([c_\mathbf{r}, c_\mathbf{r}], X)=0,
\ee
which follows from $[c_\mathbf{r}, c_\mathbf{r}]=[[\rho, \rho], [\rho, \rho]]=0$. The second statement is true because for any $X\in \fh_+$, we have
\be
\kappa ([c_{\mathbf{r}}^-, c_{\mathbf{r}}^-], X)=\kappa (c_{\mathbf{r}}^-, [c_{\mathbf{r}}^-, X])=\kappa (c_\mathbf{r}, \fh_+^c)=0.  
\ee

\begin{Lem}
    The Lie subalgebras $\wt \fh_\pm=\fh_\pm^c\oplus \C c_{\mathbf{r}}^\mp$ are Lagrangian subalgebras of $\fg$. Moreover, $c_\mathbf{r}^\pm\in \fh_\pm^c$, namely $\kappa (c_\mathbf{r}^+, c_\mathbf{r}^-)=0$. 
\end{Lem}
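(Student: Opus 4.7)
The plan is to prove the four assertions in a natural order: first the ``moreover'' statement $\kappa(c_\mathbf{r}^+, c_\mathbf{r}^-) = 0$, then the Lie subalgebra property, then isotropy, and finally the graded dimension count needed to promote isotropy to the Lagrangian condition. The main obstacle is the final step, which requires careful tracking of where the functional $\kappa(c_\mathbf{r}, -)$ is supported and of the degree pairings under $\kappa$.

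First, the vanishing $\kappa(c_\mathbf{r}^+, c_\mathbf{r}^-) = 0$ is immediate from the degree constraint of the convention laid out in Section \ref{subsec:conventions}: since $\kappa$ has degree $1$, a pairing $\kappa(x, y)$ can only be nonzero when $|x| = |y| + 1$, and both $c_\mathbf{r}^\pm$ sit in $\fg_2$. So $\kappa(c_\mathbf{r}^\pm, c_\mathbf{r}^\pm)$ and $\kappa(c_\mathbf{r}^+, c_\mathbf{r}^-)$ all vanish automatically. Decomposing $c_\mathbf{r} = c_\mathbf{r}^+ + c_\mathbf{r}^-$ and using that $\fh_\pm$ are Lagrangian, one then obtains $\kappa(c_\mathbf{r}, c_\mathbf{r}^\pm) = 0$, i.e., $c_\mathbf{r}^\pm \in \fh_\pm^c$.

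Second, I would verify that $\wt\fh_\pm$ are Lie subalgebras by assembling three facts from the paragraph immediately preceding the lemma: $\fh_\pm^c$ is a Lie ideal (hence subalgebra) of $\fh_\pm$, being the kernel of the Lie algebra homomorphism $\kappa(c_\mathbf{r}, -): \fh_\pm \to \C$; the inclusion $[c_\mathbf{r}^\mp, \fh_\pm] \subseteq \fh_\pm^c$, which in particular handles the mixed brackets $[c_\mathbf{r}^\mp, \fh_\pm^c]$; and $[c_\mathbf{r}^\mp, c_\mathbf{r}^\mp] = 0$. For isotropy I would then check the three types of pairings on $\wt\fh_\pm$: $\kappa(\fh_\pm^c, \fh_\pm^c) \subseteq \kappa(\fh_\pm, \fh_\pm) = 0$ by the Lagrangian property of $\fh_\pm$; $\kappa(\fh_\pm^c, c_\mathbf{r}^\mp) = 0$ by splitting $0 = \kappa(c_\mathbf{r}, X) = \kappa(c_\mathbf{r}^\pm, X) + \kappa(c_\mathbf{r}^\mp, X)$ for $X \in \fh_\pm^c$ and killing the first term again by Lagrangianness; and $\kappa(c_\mathbf{r}^\mp, c_\mathbf{r}^\mp) = 0$ by the degree argument of the first step.

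Third, to upgrade isotropy to Lagrangianness I would match graded dimensions. The functional $\kappa(c_\mathbf{r}, -)$ on $\fh_\pm$ is supported precisely on $(\fh_\pm)_1$, the unique degree that pairs nontrivially with $|c_\mathbf{r}|=2$ under $\kappa$. Hence $\fh_\pm^c$ has codimension one in $\fh_\pm$ in degree $1$ and agrees with $\fh_\pm$ in every other degree. Reinserting $c_\mathbf{r}^\mp \in \fg_2$ exactly compensates, yielding $\dim(\wt\fh_\pm)_n + \dim(\wt\fh_\pm)_{n-1} = \dim(\fh_\pm)_n + \dim(\fh_\pm)_{n-1} = \dim \fg_n$ for every $n$, where the final equality holds because $\fh_\pm$ is Lagrangian. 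Combined with isotropy this forces $\wt\fh_\pm = \wt\fh_\pm^\perp$, completing the proof.
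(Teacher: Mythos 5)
Your overall strategy (isotropy plus a graded dimension count, with the ``moreover'' clause extracted from degree considerations) is sound, and your first two steps are correct. In particular, your observation that $\kappa(c_\mathbf{r}^+, c_\mathbf{r}^-)=0$ holds for pure degree reasons is a genuine shortcut: the paper instead deduces $c_\mathbf{r}^\pm\in\fh_\pm^c$ only \emph{after} establishing Lagrangianness, by arguing that two Lagrangians with the same graded dimensions as $\fh_\pm$, differing from them in degrees $-1$ and $2$, could not remain transverse. Your route to that clause is cleaner.

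However, your final step contains a concrete error. You have taken the clause ``$\kappa(x\otimes y)\ne 0$ only if $|x|=|y|+1$'' from Section \ref{subsec:conventions} at face value; that clause is a typo, and the convention actually in force throughout the paper is $|x|+|y|=1$ (this is what makes $\kappa:\fg\to\fg^*[-1]$ an isomorphism, what underlies the repeated bookkeeping ``nonzero only when $|X|+|Y|+|Z|=1$'', and what makes Assumption \ref{Asp:g2=0} read ``$\fg_2=0$, or equivalently $\fg_{-1}=0$''). Consequently the functional $\kappa(c_\mathbf{r},-)$ is supported on $(\fh_\pm)_{-1}$, not on $(\fh_\pm)_1$, and the Lagrangian condition is $\dim L_n+\dim L_{1-n}=\dim\fg_n$ rather than $\dim L_n+\dim L_{n-1}=\dim\fg_n$. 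As written---remove one dimension in degree $1$, add one in degree $2$, and pair degree $n$ with degree $n-1$---your claimed identity fails for $n=1$ and $n=3$: the two changes cancel only in the single equation indexed by $n=2$. The argument is rescued precisely because $-1$ and $2$ are \emph{dual} degrees under the degree-$1$ pairing, so trading one dimension of $(\fh_\pm)_{-1}$ for one dimension of $\fg_2$ leaves every sum $\dim L_n+\dim L_{1-n}$ unchanged (you should also note, as you implicitly do, that the codimension-one drop occurs exactly when $c_\mathbf{r}^\mp\ne 0$, so the trade is genuinely one-for-one). This corrected count is exactly what the paper's own proof invokes when it speaks of losing a dimension in degree $-1$ and gaining one in degree $2$; with that repair your proof is complete.
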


\begin{proof}
    The fact that $\wt\fh_\pm$ are Lie subalgerbas follows from $[c_{\mathbf{r}}^\mp, \fh_\pm]\subseteq \fh_\pm^c$ and $[c_\mathbf{r}^\pm, c_\mathbf{r}^\pm]=0$. That they are Lagrangians follow from the fact that $\fh_\pm^c$ is defined to be the subspace having trivial pairing with $c_\mathbf{r}^\mp$. 

    Since $\wt \fh_\pm$ are both Lagrangians, and they have the same dimensions as $\fh_\pm$, they are at most transverse. However, if say $c_\mathbf{r}^+\ne 0$, then $\wt\fh_-$ loses one dimension in degree $-1$ from $\fh_-$, and grows 1-dimension in degree $2$, so it can't be that $\wt\fh_-\cap \wt\fh_+=0$. Therefore we must have $c_\mathbf{r}^\pm\in \fh_\pm^c$. 
\end{proof}

Let $\fh_\pm(c_\mathbf{r})$ be the Lie subalgebra of $\fg$ spanned by $\fh_\pm$ and $c_\mathbf{r}$, then these are two Lie subalgebras of $\fg$. The corresponding subalgebras $U(\fh_\pm (c_\mathbf{r}))\lbb\hbar\rbb$ are invariant under the differential $d_\mathbf{r}$. In fact, we have embeddings of CDGAs
\be
\iota_\pm: \lp U(\fh_\pm(c_\mathbf{r}))\lbb\hbar\rbb, d_\mathbf{r}, \hbar^2 c_\mathbf{r}\rp\longrightarrow \lp U(\fg)\lbb\hbar\rbb, d_\mathbf{r}, \hbar^2c_\mathbf{r}\rp.
\ee
The following proposition is clear. 

\begin{Prop}
    The CDGA maps $\CD_\mathbf{r}^\pm$ in Section \ref{subsubsec:monoidalact} defines monoidal actions
    \be
\lp U(\fh_+(c_\mathbf{r}))\lbb\hbar\rbb, d_\mathbf{r}, \hbar^2 c_\mathbf{r}\rp\Mod \Racts \lp U(\fg)\lbb\hbar\rbb, d_\mathbf{r}, \hbar^2c_\mathbf{r}\rp\Mod \Lacts \lp U(\fh_-(c_\mathbf{r}))\lbb\hbar\rbb, d_\mathbf{r}, \hbar^2 c_\mathbf{r}\rp\Mod
    \ee
    
\end{Prop}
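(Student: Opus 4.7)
The plan is to show that the statement is essentially a bookkeeping extension of the construction in Section \ref{subsubsec:monoidalact}, now that the natural Lagrangian-like subalgebras to use are $\fh_\pm(c_\mathbf{r})$ rather than $\fh_\pm$ themselves. Concretely, I would verify that $(U(\fh_\pm(c_\mathbf{r}))\lbb\hbar\rbb, d_\mathbf{r}, \hbar^2 c_\mathbf{r})$ are genuine CDGAs and that the pair $\CD_\mathbf{r}^\pm=(\Delta,-2\hbar\mathbf{r})$ restricts to CDGA maps into the appropriate tensor products, then invoke the formal properties of $\CD_\mathbf{r}$ established in Section \ref{subsubsec:QDL}.

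First I would check that $U(\fh_\pm(c_\mathbf{r}))\lbb\hbar\rbb$ is preserved by $d_\mathbf{r}$: for $x\in\fh_\pm$ we have $d_\mathbf{r}(x)=\hbar\delta_\mathbf{r}(x)\in\mathrm{Sym}^2(\fh_\pm)$, and $d_\mathbf{r}(c_\mathbf{r})=\hbar[\rho,c_\mathbf{r}]=\frac{\hbar}{2}[\rho,[\rho,\rho]]=0$ by graded Jacobi, so the Leibniz rule extends $d_\mathbf{r}$ to the whole subalgebra. The curvature relation $d_\mathbf{r}^2=[\hbar^2 c_\mathbf{r},-]$ on this subalgebra is inherited from the ambient $U(\fg)\lbb\hbar\rbb$. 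Next, the restriction of $(\Delta,-2\hbar\mathbf{r})$ is well-defined because on generators $x\in\fh_+(c_\mathbf{r})$ (resp.\ $\fh_-(c_\mathbf{r})$) the symmetric coproduct gives $x\otimes 1+1\otimes x\in\fh_+(c_\mathbf{r})\otimes\fg$ (resp.\ $\fg\otimes\fh_-(c_\mathbf{r})$), and since $\mathbf{r}\in\fh_+\otimes\fh_-$ its first factor sits in $\fh_+(c_\mathbf{r})$ and its second factor in $\fh_-(c_\mathbf{r})$, so the MC twist belongs to the relevant tensor product.

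It remains to verify the two CDGA-morphism axioms and the comodule (co-associativity, co-unit) axioms for $\CD_\mathbf{r}^\pm$. The differential compatibility $\Delta\,d_\mathbf{r}=(d_\mathbf{r}\otimes 1+1\otimes d_\mathbf{r})\Delta-2\hbar[\mathbf{r},\Delta(-)]$ is exactly the second identity of Theorem \ref{Thm:dr=r2}. For the curvature equation we need $\Delta(\hbar^2 c_\mathbf{r})=\hbar^2 c_\mathbf{r}\otimes 1+1\otimes\hbar^2 c_\mathbf{r}+(d_\mathbf{r}\otimes 1+1\otimes d_\mathbf{r})(-2\hbar\mathbf{r})+(-2\hbar\mathbf{r})^2$; since $c_\mathbf{r}\in\fg$ is primitive the left-hand side equals $\hbar^2 c_\mathbf{r}\otimes 1+1\otimes\hbar^2 c_\mathbf{r}$, so one must show the last two terms cancel. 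Using the first identity of Theorem \ref{Thm:dr=r2} they evaluate to $-2\hbar^2[\mathbf{r},\mathbf{r}]$ and $(-2\hbar)^2\mathbf{r}^2=2\hbar^2[\mathbf{r},\mathbf{r}]$ respectively (the latter because $|\mathbf{r}|=1$ so $[\mathbf{r},\mathbf{r}]=2\mathbf{r}^2$), and these indeed cancel. Co-associativity and the co-unit relations then follow from the corresponding identities for $\CD_\mathbf{r}$ proven in Section \ref{subsubsec:QDL} since $\mathbf{r}$ has trivial image under $\epsilon\otimes 1$ and $1\otimes\epsilon$. The resulting coaction of $U_\hbar(\fg)$-modules on $U(\fh_\pm(c_\mathbf{r}))\lbb\hbar\rbb$-modules is precisely the desired monoidal action. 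There is no substantial obstacle here: the Lie-algebraic lemma preceding the statement already guarantees $\fh_\pm(c_\mathbf{r})$ are Lagrangian subalgebras with $c_\mathbf{r}$ central enough to play well with $d_\mathbf{r}$, and all remaining checks are by direct inspection using Theorem \ref{Thm:dr=r2}.
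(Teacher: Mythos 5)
Your proposal is correct and follows the route the paper intends: the paper states this proposition without proof ("clear"), and your verification supplies exactly the omitted details — invariance of $U(\fh_\pm(c_\mathbf{r}))\lbb\hbar\rbb$ under $d_\mathbf{r}$ via $d_\mathbf{r}c_\mathbf{r}=\tfrac{\hbar}{2}[\rho,[\rho,\rho]]=0$, the two CDGA-morphism axioms from Theorem \ref{Thm:dr=r2} together with primitivity of $c_\mathbf{r}$ and $[\mathbf{r},\mathbf{r}]=2\mathbf{r}^2$, and the comodule axioms from those of $\CD_\mathbf{r}$. No gaps.
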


In this case, although we can still define left and right module categories using the CDGAs $\lp U(\fh_\pm(c_\mathbf{r}))\lbb\hbar\rbb, d_\mathbf{r}, \hbar^2 c_\mathbf{r}\rp$, we are more hesitant to call these quantizations of $\fh_\pm$, since the algebras are larger. However, we expect these CDGAs to be related to quantizations of $\fh_\pm$.

\section{1-shifted Lie bialgebra structure from loop Lie algebras}\label{sec:meromorphic}

In this section, we will consider 1-shifted Lie bialgebra structures arising from splittings of loop Lie algebras. To do so, we must extend the results of Sections \ref{sec:shiftedlba} and \ref{sec:quantdouble} 
 to topological Lie algebras (since $\fg \lpp t\rpp$ is a topological Lie algebra). It turns out that this extension is slightly intricate, and care needs to be given regarding the topology. We will comment on this subtlety as we go. 

The main goal of this section is the construction of cohomologically-shifted analogue of the Yangian algebra. This \textit{DG 1-shifted Yangian} will be a DG algebra with the property that one can take tensor products of its modules in a ``meromorphic" fashion. We will specify what this means in Section \ref{subsec:1shiftedmeror}. The 1-shifted $r$-matrix will show up in this construction as a meromorphic 1-shifted $r$-matrix. The analogies between our construction and those of ordinary Yangians justifies the name ``DG 1-shifted Yangian". 

In what follows, all vector spaces involving $\fg\lpp t\rpp$ will come with a topology from loop grading.  We will denote by $\otimes$ the ordinary tensor product and $\wh\otimes$ the completed tensor product. When we consider left modules of $\fg\lpp t\rpp$, we mean left smooth modules of $\fg[t, t^{-1}]$, or left modules of the topological algebra $U(\fg\lpp t\rpp)$. Operations and maps will be continuous with respect to this topology. 

\subsection{Loop Lie algebras and its 1-shifted cotangent}

We start with some setups and notations. We will denote by $\CO=\C\lbb t\rbb$ the ring of Taylor series and $\CK=\C\lpp t\rpp$ the field of Laurent series. We denote by $\mathbb D=\mathrm{Spec}(\CO)$ the formal disk and $\mathbb D^\times= \mathrm{Spec}(\CK)$ the formal punctured disk.

We will let $\fg$ be a simple Lie algbera, and we denote by $\fd=T^*[-1]\fg$. Let $\fg (\CK)$ be the loop Lie algebra, which as a vector space is spanned by
\be
A(t)=\sum_n A_n t^n, \qquad A_n\in \fg, A_n=0 \text{ for } n\ll 0. 
\ee

Let us consider the 1-shifted cotangent $T^*[-1]\fg(\CK)$, where the dual is taken as the continuous dual with respect to the topology of loop grading. It can be identified with $\fd (\CK)$, and is a semi-direct product
\be
T^*[-1]\fg(\CK)=\fd (\CK)= \fg (\CK)\ltimes \fg^*(\CK)[-1].
\ee
The Lie algebra $\fg (\CK)$ has a natural Lie subalgebra $\fg (\CO)$. From this Lie subalgera, we obtain a 1-shifted Lagrangian of $\fd (\CK)$ of the form
\be
N^*[-1]\fg(\CO)\subseteq \fd (\CK).
\ee
Very explicitly, this Lie algebra is of the form
\be
N^*[-1]\fg(\CO)=\fg (\CO)\ltimes \fg^*(\CO)[-1]=\fd (\CO).
\ee
To define a 1-shifted Lie bialgebra structure, we need to find a transverse Lagrangian. We will do so with the help of a classical $r$-matrix. 

\subsubsection{Splittings of $\fg(\CK)$ from a classical $r$-matrix}

We will construct a Lagrangian transverse to $\fd(\CO)$ via the means of a Lie algebra splitting of $\fg(\CO)\subseteq \fg(\CK)$. This splitting will be provided by a classical $r$-matrix in the usual setting of Lie bialgebras. Let $\beta(-,-)$ be a non-degenerate bilinear form on $\fg$, with which we can define the quadratic Casimir $\Omega\in \fg\otimes \fg$ of $\fg$. The following formal expression
\be
\gamma(t_1, t_2)=\frac{\Omega}{t_1-t_2}.
\ee
is known as Yang's classical $r$-matrix and defines the structure of a Lie bialgebra on $\fg(\CO)$. In particular, it satisfies the so-called classical Yang-Baxter equation
\be\label{eq:CYBEgamma}
            [\gamma^{12}(t_1,t_2),\gamma^{13}(t_1,t_3)] + [\gamma^{12}(t_1,t_2),\gamma^{23}(t_2,t_3)] + [\gamma^{13}(t_1,t_3),\gamma^{23}(t_2,t_3)] = 0.
        \ee
One can think of $\gamma(t_1, t_2)$ as an element in $\fg(\CK)\otimes \fg(\CO)$ via the expansion
\be
\gamma (t_1, t_2)=\Omega\sum_{n\geq 0} \frac{t_2^n}{t_1^{n+1}}.
\ee
This classical $r$-matrix is related to the splitting of $\fg(\CK)$ into $\fg(\CO)\oplus t^{-1}\fg[t^{-1}]$, as the Lie algebra $t^{-1}\fg[t^{-1}]$ is spanned by elements coming from the first factor of $\gamma$. 

This relation between splittings and $r$-matrices are very general; see e.g.\ \cite{cherednik1983becklund, etingof1998lectures, skrypnyk2013infinite}. In particular, fix $\{b_i\}$ be a set of orthonomal basis for $\fg$, we can consider a series of the form
\be
r (t_1,t_2)= \frac{\Omega}{t_1-t_2}+ g(t_1, t_2),\qquad g(t_1, t_2) \in \fg(\CO)\otimes \fg(\CO).
\ee
Using the orthonormal basis, we always have a decomposition
\be
r (t_1,t_2)= \sum r_{k, i}(t_1)\otimes b_i t_2^k, \qquad r_{k, i}(t_1)=b_i t_1^{-k-1}+\mathrm{reg.}\in \fg(\CK).
\ee
The following statement is true, and for a proof, see \cite[Section 1]{abedin2022geometrization}.

\begin{Prop}\label{Prop:splitr}
 Let $\fg (r)$ be the subspace of $\fg (\CK)$ spanned by $r_{k, i}$.

 \begin{itemize}
     \item The subspace $\fg (r)$ is a complementary Lie subalgebra of $\fg (\CK)$ if $r$ satisfies the generalized Yang-Baxter equation
     \be\label{eq:CGYBE}
 [r^{12}(t_1,t_2),r^{13}(t_1,t_3)] + [r^{12}(t_1,t_2),r^{23}(t_2,t_3)] + [r^{32}(t_3,t_2),r^{13}(t_1,t_3)] = 0.
     \ee

     \item The equality \(\fg(r) = \fg(r)^\bot\), where the orthogonal complement is taken with respect to the bilinear form \((x,y) \mapsto \textnormal{res}_{t = 0}\beta(x(t),y(t))\), holds if and only if \(r\) is skew-symmetric, i.e.\ \(r(t_1,t_2) = -\sigma (r(t_2,t_1))\) where $\sigma$ is the flip map. In this case, \(r\) solves the 
        classical Yang-Baxter equation 
        \be\label{eq:CYBE}
            [r^{12}(t_1,t_2),r^{13}(t_1,t_3)] + [r^{12}(t_1,t_2),r^{23}(t_2,t_3)] + [r^{13}(t_1,t_3),r^{23}(t_2,t_3)] = 0
        \ee
        and \(r\) is simply called \(r\)-matrix with coefficients in \(\fg\). 

        \item The subalgebra \(\fg(r)\) is stable under the derivation \(\partial_t\) if and only if \(r\) depends on the difference \(t_1-t_2\) of its variables, i.e.\ \(r(t_1,t_2) = \Tilde{r}(t_1-t_2)\) for some \(\Tilde{r} \in (\fg \otimes \fg)(\!(t)\!)\). By abuse of notation, we simply write \(r(t_1,t_2) = r(t_1-t_2)\) in this case.
 \end{itemize}
\end{Prop}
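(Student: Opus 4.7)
My plan is to handle the three bullets separately, each resting on the expansion $r(t_1, t_2) = \sum r_{k,i}(t_1) \otimes b_i t_2^k$, which expresses $r$ as the ``canonical element'' pairing the would-be subalgebra $\fg(r)$ with $\fg(\CO)$. For the first bullet, the direct-sum decomposition $\fg(\CK) = \fg(r) \oplus \fg(\CO)$ follows purely from the pole structure $r_{k,i}(t) = b_i t^{-k-1} + \mathrm{reg.}$: the principal parts of the $r_{k,i}$ form exactly a topological basis of a complement to $\fg\lbb t\rbb$. For closure under the bracket, I would read off the coefficient of $b_i t_2^k \otimes b_j t_3^l$ from \eqref{eq:CGYBE}, after carefully tracking which of the three tensor slots carries $\fg(\CK)$ content in each term. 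The first term $[r^{12}, r^{13}]$ contributes the bracket $[r_{k,i}(t_1), r_{l,j}(t_1)]$ in slot $1$; the remaining two terms contribute corrections that, after re-expanding in the same basis, give precisely the $\fg(\CO)$-component of $[r_{k,i}, r_{l,j}]$ in the decomposition $\fg(\CK) = \fg(r) \oplus \fg(\CO)$. Vanishing of \eqref{eq:CGYBE} is thus equivalent to closure $[r_{k,i}, r_{l,j}] \in \fg(r)$ for all $(k,i), (l,j)$.

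For the second bullet, the residue pairing between two elements of $\fg(r)$ is encoded in the symmetric combination $s(t_1, t_2) := r(t_1, t_2) + \sigma r(t_2, t_1)$: using $(b_i t^m, b_j t^n)_{\mathrm{res}} = \delta_{ij}\delta_{m+n, -1}$ and the orthonormality of $\{b_i\}$, the pairing $(r_{k,i}, r_{l,j})_{\mathrm{res}}$ is read off from coefficients of $s$. Hence $\fg(r)$ is isotropic iff $s = 0$, i.e.\ iff $r$ is skew-symmetric. Combined with the complementarity to the Lagrangian $\fg(\CO)$ already established, isotropy upgrades automatically to $\fg(r) = \fg(r)^\perp$. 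The classical Yang-Baxter equation \eqref{eq:CYBE} then follows from \eqref{eq:CGYBE} by substituting $r^{32}(t_3, t_2) = -r^{23}(t_2, t_3)$ via skew-symmetry.

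For the third bullet, if $r(t_1, t_2) = \tilde r(t_1 - t_2)$ then $(\partial_{t_1} + \partial_{t_2}) r = 0$, which by expansion in $\{b_i t_2^k\}$ gives $\partial_{t_1} r_{k,i}(t_1) = -(k+1) r_{k+1, i}(t_1)$, manifestly inside $\fg(r)$. Conversely, $\partial_t$-stability of $\fg(r)$ forces $\partial_{t_1} r_{k,i} \in \fg(r)$; its principal part is $-(k+1) b_i t_1^{-k-2}$, so $\partial_{t_1} r_{k,i} + (k+1) r_{k+1, i}$ is regular at $t_1 = 0$ and simultaneously in $\fg(r)$, hence in $\fg(r) \cap \fg(\CO) = 0$, giving $(\partial_{t_1} + \partial_{t_2}) r = 0$ and so difference-dependence. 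The main obstacle lies in the first bullet: the nonstandard third term $r^{32}(t_3, t_2)$ of \eqref{eq:CGYBE} demands careful sign and transposition bookkeeping, and the key technical point is showing that the combined contribution of the second and third terms to the coefficient of $b_i t_2^k \otimes b_j t_3^l$ reproduces exactly the $\fg(\CO)$-component of $[r_{k,i}, r_{l,j}]$ with no excess, so that the CGYBE is truly equivalent to closure rather than merely implying it.
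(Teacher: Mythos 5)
First, a point of comparison: the paper does not actually prove Proposition \ref{Prop:splitr} --- it is quoted as a known result with a pointer to \cite[Section 1]{abedin2022geometrization} --- so your proposal is measured against the standard argument there rather than an in-paper proof. Your second and third bullets are correct and follow that standard route: the symmetric part $s(t_1,t_2)=r(t_1,t_2)+\sigma r(t_2,t_1)=g(t_1,t_2)+\sigma g(t_2,t_1)$ does compute the residue pairings, $(r_{k,i},r_{l,j})_{\mathrm{res}}=g^{ij}_{kl}+g^{ji}_{lk}$ (the singular parts contribute $\delta_{ij}\delta_{k+l,-1}=0$ for $k,l\geq 0$); isotropy of a complement to the Lagrangian $\fg(\CO)$ does upgrade to $\fg(r)=\fg(r)^\perp$; and the principal-part argument for $\partial_t$-stability is exactly right.

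The genuine gap is in the first bullet, precisely at the step you single out as the key technical point --- and the target you state there is wrong. In the chosen expansion the second and third terms of \eqref{eq:CGYBE} read
\be
\sum r_{k,i}(t_1)\otimes[b_it_2^k,r_{l,j}(t_2)]\otimes b_jt_3^l,\qquad \sum r_{l,j}(t_1)\otimes b_it_2^k\otimes[r_{k,i}(t_3),b_jt_3^l],
\ee
so their first tensor slot lies in $\fg(r)$ by construction. Whatever coefficient of $b_it_2^k\otimes b_jt_3^l$ you extract from them therefore lands in $\fg(r)$, not in $\fg(\CO)$: they supply (minus) the $\fg(r)$-component of $[r_{k,i},r_{l,j}]$, not its $\fg(\CO)$-component. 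Your version would yield $[r_{k,i},r_{l,j}]+P_{\fg(\CO)}[r_{k,i},r_{l,j}]=0$, forcing all these brackets to vanish, which is absurd; the corrected version yields $P_{\fg(\CO)}[r_{k,i},r_{l,j}]=0$, i.e.\ closure. In fact you can bypass the coefficient bookkeeping entirely: apply $P_{\fg(\CO)}\otimes 1\otimes 1$, the projection onto $\fg(\CO)$ along $\fg(r)$ in the first slot, to \eqref{eq:CGYBE}; the second and third terms are annihilated outright, and the first term leaves $\sum P_{\fg(\CO)}[r_{k,i},r_{l,j}]\otimes b_it_2^k\otimes b_jt_3^l=0$, whence closure by linear independence of the $b_it_2^k\otimes b_jt_3^l$. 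Note also that the proposition only asserts the implication from \eqref{eq:CGYBE} to closure, so the converse you worry about (which requires tracking the principal parts in slots $2$ and $3$ that the Taylor-coefficient extraction does not see) is not needed for the statement as written.
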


Given such a splitting $\fg (\CK)=\fg (\CO)\oplus \fg (r)$, it is not difficult to derive the following result. 

\begin{Cor}

    Let $\fg$ be simple. For each solution $r(t_1, t_2)$ of the generalized classical Yang-Baxter equation, there exists a 1-shifted Manin-triple
    \be
N^*[-1]\fg (r)\oplus \fd (\CO)=\fd (\CK).
    \ee
    Here $N^*[-1]\fg (r)=\fg (r)\ltimes \fg (r)^\bot[-1]$. 
    
\end{Cor}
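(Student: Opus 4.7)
The plan is to put a 1-shifted metric structure on $\fd(\CK)=\fg(\CK)\oplus \fg^*(\CK)[-1]$ via the residue pairing, and then exhibit $\fd(\CO)$ and $N^*[-1]\fg(r)$ as transverse Lagrangian Lie subalgebras. Explicitly, for $(X_i,\xi_i)\in \fd(\CK)$, set
\[
\kappa((X_1,\xi_1),(X_2,\xi_2))\;=\;\operatorname{res}_{t=0}\!\big(\xi_2(X_1)-\xi_1(X_2)\big)\,dt,
\]
and use $\beta$ to identify $\fg^*\cong\fg$, so that this becomes the residue pairing on $\fg(\CK)$ between the $\fg$ and $\fg^*[-1]$ summands. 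One checks that $\kappa$ is continuous, non-degenerate, of degree $1$, skew-symmetric, and invariant under the semi-direct product bracket of $\fd(\CK)$. The subspace $\fd(\CO)$ is manifestly a Lie subalgebra, and is Lagrangian because for $X\in\fg(\CO)$ and $\xi\in\fg^*(\CO)$ the product $\xi(X)$ lies in $\CO$ and has vanishing residue; non-degeneracy of the residue pairing then upgrades $\fd(\CO)\subseteq \fd(\CO)^\perp$ to equality.

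Next I would invoke Proposition \ref{Prop:splitr}: since $r$ solves the generalized classical Yang--Baxter equation \eqref{eq:CGYBE}, $\fg(r)\subseteq \fg(\CK)$ is a Lie subalgebra and $\fg(\CK)=\fg(\CO)\oplus \fg(r)$. To promote this to the 1-shifted Manin triple, I verify the three properties of $N^*[-1]\fg(r)=\fg(r)\ltimes \fg(r)^{\bot}[-1]$ in turn. The Lie subalgebra property reduces to $[\fg(r),\fg(r)^{\bot}]\subseteq \fg(r)^{\bot}$, which follows from the invariance of the residue-$\beta$ form: for $x,z\in\fg(r)$ and $y\in\fg(r)^\bot$ one has $\operatorname{res}\beta([x,y],z)=-\operatorname{res}\beta(y,[x,z])=0$ because $[x,z]\in\fg(r)$. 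The Lagrangian property is immediate: for $(X_i,\xi_i)\in N^*[-1]\fg(r)$ both terms of $\kappa$ vanish since $\xi_i\in\fg(r)^\bot$ pair trivially with $X_j\in\fg(r)$.

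Transversality is the substantive step, and the key observation is the \emph{dual splitting}
\[
\fg(\CK)\;=\;\fg(\CO)\oplus \fg(r)^{\bot},
\]
obtained by dualizing $\fg(\CK)=\fg(\CO)\oplus \fg(r)$ along the non-degenerate residue-$\beta$ pairing: the annihilator of $\fg(r)$ is $\fg(r)^{\bot}$ by definition, while the annihilator of $\fg(\CO)$ equals $\fg(\CO)$ itself since $\fg(\CO)$ is Lagrangian in $\fg(\CK)$ (this is the standard residue computation $\fg(\CO)^\perp=\fg(\CO)$). Combining the degree-$0$ splitting with its dual degree-$1$ splitting (transported through $\beta$ to $\fg^*(\CK)$) gives
\[
\fd(\CK)\;=\;\big(\fg(r)\oplus\fg(\CO)\big)\;\oplus\;\big(\fg(r)^{\bot}[-1]\oplus \fg^*(\CO)[-1]\big)\;=\;N^*[-1]\fg(r)\oplus \fd(\CO),
\]
which is precisely the required direct-sum decomposition.

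The main obstacle is bookkeeping of the topology: $\fg(\CK)$ is only a \emph{topological} vector space, so every occurrence of ``dual'', ``annihilator'', and ``direct sum'' has to be interpreted using the continuous dual for the loop topology (and the completed tensor product $\wh\otimes$ when forming the metric). Once one fixes the convention that the residue pairing identifies the continuous dual of $\fg(\CK)$ with $\fg(\CK)$ via $\beta$, the bilinear-algebra manipulations above become formal, and the entire content of the statement is concentrated in Proposition \ref{Prop:splitr}. No genuinely new argument is required beyond that proposition; the only real work is checking that the pairing $\kappa$ and the dual splitting behave well with respect to the completed topological tensor products inherent in $\fg(\CK)$.
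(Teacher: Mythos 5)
Your proof is correct and follows the route the paper intends: the paper offers no written argument for this corollary (it simply asserts that it follows from the splitting $\fg(\CK)=\fg(\CO)\oplus\fg(r)$ of Proposition \ref{Prop:splitr}), and your elaboration --- the residue pairing as the 1-shifted metric, isotropy of both subalgebras, the invariance computation showing $[\fg(r),\fg(r)^{\perp}]\subseteq\fg(r)^{\perp}$, and transversality via the dual splitting $\fg(\CK)=\fg(\CO)\oplus\fg(r)^{\perp}$ --- is exactly the standard one, consistent with the paper's later implicit use of the dual basis $\epsilon\wt r_{k,i}\in\epsilon\,\fg(r)^{\perp}$. The only cosmetic point is that isotropy of $N^*[-1]\fg(r)$ alone is not yet the Lagrangian condition, but together with transversality and non-degeneracy it upgrades to maximal isotropy, so there is no gap.
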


\begin{Rem}
    To avoid the clumbsy notation $N^*[-1]\fg (r)$, we will from now on denote it by $\fd(r)$, signifying its relation to $\fd=T^*[-1]\fg$. The vector space $\fd (r)$ has trivial topology, whereas $\fd (\CO)$ has non-trivial topology. 
    
\end{Rem}

Since the Lie algebras involved are infinite dimensional, it is not true that one can simply obtain cobrackets $\delta: \fh\to \fh\otimes \fh$ for both $\fh=\fd(\CO)$ and $\fh=\fd (r)$. We claim that the 1-shifted Manin-triple above induces cobrackets to the completed tensor products
\be
\delta_{\fd (r)}: \fd (r)\to \fd (r)\wh\otimes \fd (r)= \fd (r)\otimes \fd (r), \qquad \delta_{\fd (\CO)}: \fd (\CO)\to \fd (\CO)\wh\otimes \fd (\CO)=\fd\otimes \fd \lbb t_1, t_2\rbb.
\ee
The first cobracket follows from the fact that the bracket on $\fd (\CO)$ is graded by loop degree and can be obtained from $\fd [t]$. The graded dual of $\fd[t]$ is precisely $\fd (r)=\fd [t, t^{-1}]/\fd[t]$. This grading ensures $\delta_{\fd (r)}$ is valued in $\fd (r)\otimes \fd (r)$. The second cobracket follows from the fact that $\fd(r)$ is a filtered Lie algebra $\fd (r)=\varinjlim_i \fd (r)_{\leq i}$ whose filtered dual is precisely $\fd (\CO)$ with its topology. The proof of Proposition \ref{Prop:delta} applies to these brackets, and therefore they satisfy the relations
\be
\delta\otimes 1+1\otimes \delta (\delta)=0\in \mathrm{Sym}^3,\qquad \delta[X, Y]=[\delta X, Y]+(-1)^{|X|}[X, \delta Y]. 
\ee

\subsubsection{1-shifted $r$-matrix from classical $r$-matrix}

Associated to the Manin-triple $(\fd (\CK), \fd (r), \fd (\CO))$, one can write down an $r$-matrix 
\be
\mathbf{r}\in \fd(r)\wh\otimes \fd(\CO).
\ee
This is well-defined by the property that $1\otimes \kappa (\mathbf{r}, X)=-X$ for $X\in \fd (r)$. Of course it also satisfies $\kappa\otimes 1(\mathbf r, Y)=(-1)^{|Y|}Y$ for $Y\in \fd (\CO)$. Let $\delta=(\delta_{\fd (r)}, -\delta_{\fd(\CO)})$ be the co-bracket defined on $\fd (\CK)$:
\be
\delta: \fd (\CK)\to \fd (\CK)\wh\otimes \fd (\CK)=\fd\otimes \fd \lpp t_1, t_2\rpp,
\ee
then the proof of Proposition \ref{Prop:coboundaryr} applies here, and shows that
\be
\delta=[-\mathbf{r}, \Delta]\in \fd\otimes \fd \lpp t_1, t_2\rpp. 
\ee
Note that in a-priori the element $\mathbf{r}$ belongs to $\fd\otimes \fd \lpp t_1\rpp\lpp t_2\rpp$, the claim is that the commutator above in fact belongs to the subspace $\fd\otimes \fd \lpp t_1, t_2\rpp$.

We can relate $\mathbf{r}$ with the classical $r$-matrix $r(t_1, t_2)$ as follows. With the help of $\beta$, we can identify $\fg^*(\CK)$ with $\fg(\CK)$ as modules of $\fg(\CK)$. For any $X\in \fg(\CK)$, we will denote by $\epsilon X$ the corresponding element in $\fg^*(\CK)[-1]$. The elements $b_i t^k, \epsilon b_i t^k$ form a set of basis for $\fd (\CO)$. Since $r_{k, i}=b_i t^{-k-1}+\fg (\CO)$, the corresponding dual basis of $\epsilon b_i t^k$ under the pairing $\kappa$ is simply $r_{k, i}(t)$. The dual basis of $b_i t^k$ should be elements in $\epsilon \fg (r)^\bot$. To express these elements, we expand the element $r(t_1, t_2)$ over the region $|t_1|<|t_2|$ to find
\be
r (t_1, t_2)= \sum b_i t_1^k \otimes \wt r_{k, i}(t_2), \qquad \wt r_{k, i}\in \fg (\CK). 
\ee
The elements $\epsilon \wt r_{k, i}(t_2)$ are precisely the elements in $\epsilon \fg (r)^\bot$ dual to $b_i t^k$ (for a proof, see \cite[Lemma 1.24]{abedin2022geometrization}). We find in the end the following expression (where $\sigma$ is the flip map)
\be\label{eq:skewrr}
\mathbf{r}(t_1, t_2)=\sum  \epsilon \wt r_{k, i}(t_1)\otimes b_i t^k_2+ \sum r_{k,i}(t_1)\otimes \epsilon b_i t^k_2=1\otimes \epsilon (r(t_1, t_2))+\epsilon\otimes 1(\sigma r(t_2, t_1)).
\ee
We interpret this expression as an element in $\fd (r)\otimes \fd (\CO)$ by expanding the series at the region where $|t_1|>|t_2|$, so that $\frac{1}{t_1-t_2}=\sum \frac{t_2^n}{t_1^{n+1}}$. If we denote $\rho(t_1, t_2)=(1\otimes \epsilon) r (t_1, t_2)$, then the above can be rewritten as
\be\label{eq:rsymmetrize}
\mathbf{r}(t_1, t_2)= \rho (t_1, t_2)-\sigma (\rho (t_2, t_1)).
\ee
Here the minus sign is due to the fact that $\sigma (1\otimes \epsilon)=-(\epsilon\otimes 1)\sigma$. This expression bears analogy with the skew-symmetrization of the classical $r$-matrix in \cite[Proposition 2.1]{abedin2024yangiancotangentliealgebras}, except that now $\epsilon$ is of homological degree $1$. Of course this $\mathbf{r}$ satisfies classical Yang-Baxter equation of Proposition \ref{Prop:1shiftedYB}, where commutators are computed in $\fd^{\otimes 3}\lpp t_1\rpp\lpp t_2\rpp \lpp t_3\rpp$.  

\subsubsection{Consequence of the set-up}

The subtlety of topology becomes more severe when one tries to extend the result of Theorem \ref{Thm:QDGAL}. We can construct a differential $d_r$ on $U(\fd (\CK))\lbb\hbar\rbb$ in the same way, via the maps
\be
\btik
\fd (\CK)\rar{\hbar\delta} & \fd\otimes \fd \lpp t_1, t_2\rpp\lbb\hbar\rbb\rar{\nabla} & U(\fd (\CK))\lbb\hbar\rbb.  
\etik
\ee
Note that this multiplication here makes sense because the image of $\delta$ is in $\fd\otimes \fd \lpp t_1, t_2\rpp$. However, the multiplication from $\fd\otimes \fd\lpp t_1\rpp\lpp t_2\rpp$ to $U(\fd (\CK))\lbb\hbar\rbb$ does not necessarily make sense, so some care is needed when showing that the above differential is inner. Fortunately, the element $\mathbf{r}$ belong to $\fd\otimes \fd \lpp t_1\rpp\lbb t_2\rbb$, so for any smooth module, the action of $\nabla \mathbf{r}$ makes sense, and therefore $\nabla\mathbf{r}$ is well-defined in the universal enveloping algebra. It is now clear that $d=[-\nabla\mathbf{r}, -]$. However, the action of $\nabla\mathbf{r}^{21}$ does not make sense, since it has negative powers appearing before the positive powers, so one can't quickly use the fact that $\fd_2=0$ to deduce that $d^2=0$. However, suppose that $dX=X^{(1)}X^{(2)}$ for a possibly infinite (but convergent) sum, satisfying $X^{(1)}\otimes X^{(2)}=(-1)^{|X^{(1)}||X^{(2)}|}X^{(2)}\otimes X^{(1)}$, then we must have that $d^2$ applied to primitive elements belong to $\mathrm{Sym}^3\oplus \mathrm{Sym}^1$ (as in Remark \ref{Rem:d2odd}). By definition, the $\mathrm{Sym}^3$ part of this is still zero, and therefore $d^2X\in \fd (\CK)$. However, our algebra $\fd (\CK)$ is in degree $0, 1$, and so by degree reasons $d^2X=0$. 

In conclusion, we have DG algebras
\be
U_\hbar (\fd(\CO)),\qquad U_\hbar (\fd (r)),\qquad U_\hbar (\fd (\CK)),
\ee
such that the category $U_\hbar (\fd (\CK))\Mod$ is a monoidal category, and it acts on $U_\hbar (\fd(\CO))\Mod$ on the left, while acting on $U_\hbar (\fd(r))\Mod$ on the right. We note that by modules of $U_\hbar (\fd (\CK))$ we always mean smooth modules under the topology of loop grading, and similarly for the other algebras. The monoidal structure and the action are determined by the 1-shifted $r$-matrix $\mathbf{r}(t_1, t_2)$. 

\subsection{1-shifted meromorphic $r$-matrix}\label{subsec:1shiftedmeror}

The category $U_\hbar (\fd (\CK))\Mod$ is monoidal, while $U_\hbar (\fd(\CO))\Mod$ is not. This is simply because the element $\mathbf{r}$ is not valued in $\fd(\CO)$. In this section, we show that if we translate the coproduct, then one can perform a re-expansion to make $\mathbf{r}$ into an element in $\fd(\CO)$. This procedure of translating the coproduct to obtain $r$-matrices also show up in the study of ordinary Yangian's, as in \cite{gautam2021meromorphic}. From now on we will further assume that $r$ depends only on the difference of the arguments $r=r(t_1-t_2)$. Consequently, the element $\mathbf{r}=\mathbf{r}(t_1-t_2)$ is also difference-dependent. 

The algebras $U_\hbar (\fd(\CK))$ admits a differential $T$, which is given on generators by
\be
T x_{a, n}=nx_{a, n-1},\qquad T \epsilon^a_n=n\epsilon^a_{n-1}.
\ee
In short, $T x(t)=\pd_t x(t)$ and $T \epsilon (t)=\pd_t \epsilon (t)$.  

\begin{Lem}
  The differentials $T$ and $d_{\mathbf r}$ commutes with each other. 
\end{Lem}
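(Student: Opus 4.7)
The plan is to reduce the claim to a computation on generators via the Leibniz rule and then exploit the difference dependence of $\mathbf{r}=\mathbf{r}(t_1-t_2)$. First, I would observe that $T$ is a graded derivation of $U_\hbar(\fd(\CK))$ of degree $0$ (by its definition on generators together with Leibniz), while $d_{\mathbf r}$ is a graded derivation of degree $1$, being inner. Consequently the graded commutator $[T,d_{\mathbf r}] = T\,d_{\mathbf r} - d_{\mathbf r}\,T$ is again a graded derivation of degree $1$. Hence it suffices to prove $[T,d_{\mathbf r}](X)=0$ for $X\in\fd(\CK)$, the generating subspace, and the identity will extend automatically to the whole algebra.

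Next, I would unpack $d_{\mathbf r}$ on generators as the composition $\nabla\circ\hbar\delta$, where $\delta(X)=[-\mathbf{r},\Delta(X)]$ with $\Delta$ the symmetric coproduct, valued in $\fd\otimes\fd\lpp t_1,t_2\rpp$. Extend $T$ to this completed tensor product as $T\otimes 1+1\otimes T$, which on the loop variables is $\partial_{t_1}+\partial_{t_2}$. Three elementary facts then do the work: (i) $\Delta$ is $T$-equivariant, since on primitive elements $\Delta X=X\otimes 1+1\otimes X$; (ii) the difference dependence yields $(\partial_{t_1}+\partial_{t_2})\mathbf{r}(t_1-t_2)=0$; (iii) the bracket on $\fd(\CK)\wh\otimes\fd(\CK)$ is preserved by $T\otimes 1+1\otimes T$. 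Combining these,
\[
T\,\delta(X) \;=\; [-(T\otimes 1+1\otimes T)\mathbf{r},\,\Delta X] \;+\; [-\mathbf{r},\,\Delta(TX)] \;=\; \delta(TX).
\]
Since the multiplication $\nabla$ is $T$-equivariant (by Leibniz for $T$), this gives $T\,d_{\mathbf r}(X)=d_{\mathbf r}\,T(X)$ on the generating subspace, which is what we needed.

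The main obstacle is not conceptual but topological: one must check that the formal manipulations above are legitimate in the completed tensor products $\fd\otimes\fd\lpp t_1,t_2\rpp$ and $\fd\otimes\fd\lpp t_1\rpp\lbb t_2\rbb$ in which $\mathbf{r}$ and its derivatives live. The same convergence considerations that were used in the preceding subsection to make $d_{\mathbf r}$ and $\nabla\mathbf{r}$ well-defined as operators on smooth modules apply here without change, since $(T\otimes 1+1\otimes T)\mathbf{r}=0$ holds already at the level of formal series in $t_1-t_2$; no new subtlety is introduced.
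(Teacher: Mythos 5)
Your proof is correct and follows essentially the same route as the paper: the whole argument rests on the single observation that difference-dependence gives $(T\otimes 1+1\otimes T)\,\mathbf{r}(t_1-t_2)=(\pd_{t_1}+\pd_{t_2})\mathbf{r}(t_1-t_2)=0$, after which one pushes this through $\nabla$ and the (inner) derivation structure of $d_{\mathbf r}$. The paper states this more tersely by applying $\nabla$ directly to that identity, while you unpack it on generators via $\delta=[-\mathbf{r},\Delta]$; the content is the same.
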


\begin{proof}
    This follows from difference-dependence of $\mathbf{r}$. Indeed, we have
    \be
[T\otimes 1+1\otimes T, \mathbf{r}(t_1-t_2)]= \lp \pd_{t_1}+\pd_{t_2}\rp \mathbf{r}(t_1-t_2)=0.
    \ee
    Applying multiplication map $\nabla$ to this we get the desired result. 
    
\end{proof}

In particular, the differential $T$ is a differential on the DG algebra $U_\hbar (\fd (\CO))$ and $U_\hbar (\fd (r))$. Let us now consider the shifted coproduct (where $z$ is an auxiliary variable)
\be\label{eq:shiftedco}
U_\hbar (\fd (\CO))\longrightarrow U_\hbar (\fd (\CO))\wh \otimes_{\Ch} U_\hbar (\fd (\CO)) \lbb z\rbb
\ee
given by
\be
\Delta_z= \tau_z\otimes 1 (\Delta), \qquad \tau_z=e^{zT}. 
\ee
Similarly, one can shift the $r$-matrix into $(\tau_z\otimes 1)\mathbf{r}$ and re-expand over the region  where $|z|>|t_1|, |t_2|$ without changing the commutation relation. After this expansion, the element $(\tau_z\otimes 1)\mathbf{r}$ is of the form
\be
\mathbf{r}(t_1+z-t_2)=(1\otimes \epsilon) r (t_1+z -t_2)+ (\epsilon\otimes 1)\sigma (r(t_2-z-t_1)).
\ee
One can then hope that $(\Delta_z, -2\hbar (\tau_z\otimes 1)\mathbf{r})$ is a map of CDGAs. 

The above is not quite correct, since we run into the following issue. After re-expansion, the element $\mathbf{r}(t_1+z-t_2)$ lives in the space
\be
U_\hbar (\fd (\CO))\Chtensor U_\hbar (\fd (\CO))\lbb z^\pm\rbb,
\ee
and the expression $[\mathbf{r}(t_1+z-t_2), \Delta_z]$ does not make sense. To deal with this issue, we will \textbf{not} consider $\Delta_z$ as maps between algebras, but rather consider the corresponding structure on the category of modules. 

To this end, we introduce the type of modules we would like to consider. Let $M$ be a finitely generated smooth module (with respect to the topology of $\fd(\CO)$) of $U_\hbar (\fd (\CO))$ that is flat over $\C\lbb\hbar\rbb$. We assume that it is of the form $M=U\lbb\hbar\rbb$ for some vector space $U$, and that for some positive integer $K_M$ the elements $I_a t^k, \epsilon I_a t^k$ acts trivially on $M$ for $k\geq K_M$. We call these modules \textit{FSF} modules, which stands for finitely-generated, smooth and flat over $\C\lbb\hbar\rbb$. For any two such modules, say $M=U\lbb\hbar\rbb$ and $N=V\lbb\hbar\rbb$, their tensor product over $\C\lbb\hbar\rbb$ can be naturally identified with $U\otimes V\lbb\hbar\rbb$. 

\begin{Prop}\label{Prop:tensorz}
Let $M=U\lbb\hbar\rbb, N=V\lbb\hbar\rbb$ be two FSF modules of $U_\hbar (\fd (\CO))$. The following are true. 

    \begin{itemize}
        \item The differential:
        \be
d_{\mathbf{r}}(z):= d_M\otimes 1+1\otimes d_N-2\hbar \mathbf{r}(t_1+z-t_2)\cdot -
        \ee
        is a well-defined square-zero differential on the topological vector space
        \be
M\Chtensor N \lpp z\rpp.
        \ee
Here the topology is from loop space topology. We denote the resulting topological DG vector space by $M_z\otimes N_0$. 

\item The map $\Delta_z$ gives a well-defined action of $U_\hbar (\fd (\CO))$ on $M_z\otimes N_0$, although this module is no longer smooth. 

\item There is an isomorphism of DG modules
\be
M_z\otimes N_0\cong \tau_z\lp N_{-z}\otimes M_0\rp. 
\ee

    \end{itemize}
    
\end{Prop}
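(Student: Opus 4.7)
My plan is to address the three claims in order, each resting on translation-equivariance of the structure maps and the two coproduct identities of Theorem \ref{Thm:dr=r2}, now applied to their meromorphic translates via the algebra automorphism $\tau_z\otimes 1$.

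For the first assertion, well-definedness reduces to controlling the $z$-powers of $\mathbf{r}(t_1+z-t_2)$ acting on $M\Chtensor N\lpp z \rpp$. After expansion in the region $|z|>|t_1-t_2|$, the part $C/(t_1+z-t_2)$ contributes to the coefficient of $z^{-k-1}$ an element proportional to $(t_1-t_2)^k$, while $g(t_1+z-t_2)\in \fg\lbb t_1\rbb \otimes \fg\lbb t_2\rbb \lbb z\rbb$ contributes only non-negative powers of $z$. The FSF hypothesis forces $I_a t^k$ and $\epsilon I_a t^k$ to act as zero on $M$ (resp.\ $N$) for $k\geq K_M$ (resp.\ $K_N$); hence all coefficients of $z^{-k}$ with $k\geq K_M+K_N$ vanish as operators on $M\Chtensor N$, so $\mathbf{r}(t_1+z-t_2)\cdot -\in \End_{\Ch}(M\Chtensor N)\lpp z\rpp$ and preserves $M\Chtensor N\lpp z\rpp$. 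For $(d_{\mathbf r}(z))^2=0$, I apply $\tau_z\otimes 1$ to the first identity of Theorem \ref{Thm:dr=r2} to obtain
\[
\hbar(d_{\mathbf r}\otimes 1+1\otimes d_{\mathbf r})\mathbf{r}(t_1+z-t_2)=\hbar^2 [\mathbf{r}(t_1+z-t_2),\mathbf{r}(t_1+z-t_2)],
\]
and combine this with the DG-module identity $[d_M\otimes 1+1\otimes d_N,\alpha\cdot -]=(d_{\mathbf r}\otimes 1+1\otimes d_{\mathbf r})\alpha\cdot -$ to deduce the Maurer-Cartan equation for $\alpha=-2\hbar\mathbf{r}(t_1+z-t_2)$, which is exactly the vanishing of $(d_{\mathbf r}(z))^2$.

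For the second assertion, $\tau_z=e^{zT}$ is a $\Ch\lbb z\rbb$-linear algebra automorphism, so $\Delta_z=(\tau_z\otimes 1)\Delta$ is an algebra map into $U_\hbar(\fd(\CO))\Chtensor U_\hbar(\fd(\CO))\lbb z\rbb$, acting on $M\Chtensor N\lpp z\rpp$ by restriction of scalars; smoothness is lost because $\tau_z$ mixes high $t$-powers with high $z$-powers, but this does not affect the underlying module structure. Compatibility with the differential follows by applying $\tau_z\otimes 1$ to the second identity of Theorem \ref{Thm:dr=r2}, which (using that $\tau_z$ commutes with $d_{\mathbf r}$ and conjugates $[\mathbf{r},-]$ to $[\mathbf{r}(t_1+z-t_2),-]$) gives
\[
\Delta_z\, d_{\mathbf r}=(d_{\mathbf r}\otimes 1+1\otimes d_{\mathbf r}-2\hbar[\mathbf{r}(t_1+z-t_2),-])\Delta_z.
\]
Writing $a\ast v=\Delta_z(a)\cdot v$ and repeating the short calculation at the end of Section \ref{subsubsec:QDL} with $\Delta_z$ in place of $\Delta$ yields the Leibniz rule $d_{\mathbf r}(z)(a\ast v)=(d_{\mathbf r}a)\ast v+(-1)^{|a|}a\ast d_{\mathbf r}(z)v$.

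For the third assertion, I would construct $\Phi:M\Chtensor N\lpp z\rpp\to N\Chtensor M\lpp z\rpp$ out of the Koszul braiding $\sigma$, extended $\Ch\lpp z\rpp$-linearly. Compatibility with the action is most direct: interpreting $\tau_z V$ as the pullback module with action $a\cdot_{\tau_z V}v=\tau_z(a)\cdot_V v$, the action of $a$ on $\tau_z(N_{-z}\otimes M_0)$ reads $\Delta_{-z}(\tau_z a)\cdot -=(\tau_{-z}\otimes 1)(\tau_z\otimes\tau_z)\Delta(a)\cdot -=(1\otimes\tau_z)\Delta(a)\cdot -$, and cocommutativity of the symmetric coproduct gives $\sigma\circ (\tau_z\otimes 1)\Delta(a)=(1\otimes\tau_z)\sigma\Delta(a)=(1\otimes\tau_z)\Delta(a)\circ\sigma$. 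Compatibility with the differential relies on the difference-dependent identity $\sigma\mathbf{r}(u)=-\mathbf{r}(-u)$, which follows directly from \eqref{eq:rsymmetrize}, combined with the relabelling of factor variables under the flip (swapping the role of the first- and second-factor variables turns $t_1+z-t_2$ into $-(t_1-z-t_2)$). The inverse is given by the analogous construction.

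The main technical obstacle lies in the third item: nailing down the precise form of $\Phi$ (possibly up to an overall sign or an extra twist on one factor by $\tau_z$) requires carefully tracking Koszul signs produced by the flip of two graded tensor factors together with the sign built into the definition of $\epsilon$, and matching these against the sign discrepancy introduced by the relabelling of factor variables. Parts 1 and 2, by contrast, amount to mechanical applications of the two identities in Theorem \ref{Thm:dr=r2} once the FSF hypothesis has been used to trim $\mathbf{r}(t_1+z-t_2)$ to an operator with finitely many negative $z$-powers.
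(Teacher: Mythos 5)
Your overall strategy---apply $\tau_z\otimes 1$ to the two identities of Theorem \ref{Thm:dr=r2}, re-expand in the region $|z|>|t_1|,|t_2|$, and use the flip $\sigma$ together with $\Delta^{op}_{-z}=\Delta_z\tau_{-z}$ for the commutativity statement---is the same as the paper's, and your treatment of the second and third items matches the paper's in outline. (The sign you flag as the main obstacle in the third item is settled in the paper by a one-line computation: applying $\tau_z$ to \eqref{eq:rsymmetrize} and using difference-dependence shows that $\sigma$ carries $\mathbf{r}(t_1+z-t_2)$ exactly to the twist term of $N_{-z}\otimes M_0$, with no residual sign and no extra twist beyond the $\tau_z$ already present in the statement.)

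The genuine gap is in your proof that $(d_{\mathbf{r}}(z))^2=0$. Applying $\tau_z\otimes 1$ to the first identity of Theorem \ref{Thm:dr=r2} and then re-expanding yields the identity with $(\iota_{z>t}\circ d_{\mathbf{r}})\otimes 1$ applied to $\mathbf{r}$, where $\iota_{z>t}\colon \C\lpp t-z\rpp\to\C\lpp z\rpp\lbb t\rbb$ is the re-expansion. Your ``DG-module identity'' $[d_M\otimes 1+1\otimes d_N,\alpha\cdot-]=\bigl((d_{\mathbf{r}}\otimes 1+1\otimes d_{\mathbf{r}})\alpha\bigr)\cdot-$, on the other hand, computes the differential of the \emph{already re-expanded} tensor, i.e.\ $(d_{\mathbf{r}}\circ\iota_{z>t})\otimes 1(\mathbf{r})$. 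These are differently rearranged infinite sums: each $z$-coefficient of the re-expanded $\mathbf{r}(t_1+z-t_2)$ collects contributions from infinitely many pre-expansion terms, so the interchange of $d_{\mathbf{r}}$ with $\iota_{z>t}$ is not automatic, and it is precisely the point to which the paper devotes the bulk of its proof. The paper establishes $(\iota_{z>t}d_{\mathbf{r}})\otimes 1(\mathbf{r})=(d_{\mathbf{r}}\iota_{z>t})\otimes 1(\mathbf{r})$ by introducing a third expansion region: it views $\mathbf{r}(t_1+z-t_2)$ for $|t_1|<|t_2-z|$ as an element of $\fd(\CO)\wh\otimes\,\fd(r)$, applies the classical 1-shifted Yang--Baxter equation \eqref{eq:1shiftedYB} there to rewrite the left-hand side as a bracket $[\mathbf{r}^{12},\mathbf{r}^{13}]$, and then re-expands, identifying both sides with $\iota_{z>t_1,t_2}[\mathbf{r}^{12}(t_1+z-t_2),\mathbf{r}^{13}(t_1+z-t_2)]$. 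Without this step, or some substitute for it, the Maurer--Cartan equation for $\alpha=-2\hbar\mathbf{r}(t_1+z-t_2)$ acting on $M\Chtensor N\lpp z\rpp$ is not established, and the first item of the proposition remains unproved.
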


\begin{proof}
    Let us first prove 1. It is clear that $d_{\mathbf{r}}(z)$ is well-defined acting on $M\Chtensor N \lpp z\rpp$. Indeed, we expand the element $r(t_1+z-t_2)$ by
    \be\label{eq:zlarge}
r(t_1+z-t_2)= C\cdot \sum_{k\geq 0} \frac{(t_2-t_1)^k}{z^{k+1}}+ g(t_1+z-t_2)
    \ee
which induces the corresponding expansion of $\mathbf{r}$ using equation \eqref{eq:skewrr}. From the smoothness of $M, N$, we see that when acting on $M\Chtensor N$, $\mathbf{r}$ has finite order pole. 

For the rest of 1, we need to show that $d_{\mathbf{r}}(z)^2=0$, or in other words
\be\label{eq:dr=rrzlarge}
d_M\otimes 1+1\otimes d_N (\mathbf{r}(t_1+z-t_2))= \hbar [\mathbf{r}(t_1+z-t_2), \mathbf{r}(t_1+z-t_2)].
\ee
We know that the element $\mathbf{r}(t_1-t_2)$ satisfies
\be\label{eq:dr=rrgK}
d_{\mathbf{r}}\otimes 1+1\otimes d_{\mathbf{r}} (\mathbf{r}(t_1-t_2))= \hbar [\mathbf{r}(t_1-t_2), \mathbf{r}(t_1-t_2)].
\ee
This is an equation in $U_\hbar (\fd (r))\wh\otimes_{\Ch} U_\hbar (\fd (\CO))$. To make contact with equation \eqref{eq:dr=rrzlarge}, we consider the two formal expansions
\be
\iota_{t>z}: \C\lpp t-z\rpp\longrightarrow \C\lpp t\rpp\lbb z \rbb,\qquad \iota_{z>t}: \C\lpp t-z\rpp\longrightarrow \C\lpp z\rpp \lbb t\rbb.
\ee
These induce embeddings of Lie algebras
\be
\btik
 & \fd (\CK)\arrow[dr, "\iota_{z>t}\circ \tau_{-z}"] \arrow[dl, swap, "\iota_{t>z}\circ \tau_{-z}"] & \\
 \fd (\CK)\lbb z\rbb & & \fd \lpp z\rpp\lbb t\rbb 
\etik
\ee
Applying $\iota_{z>t}\circ \tau_{z}\otimes 1$ to equation \eqref{eq:dr=rrgK}, we obtain an equaiton
\be\label{eq:dr=rrzlarge'}
(\iota_{z>t}\otimes 1)\lp d_{\mathbf{r}}\otimes 1+1\otimes d_{\mathbf{r}} (\mathbf{r}(t_1+z-t_2))\rp= \hbar (\iota_{z>t}\otimes 1)\lp [\mathbf{r}(t_1+z-t_2), \mathbf{r}(t_1+z-t_2)] \rp
\ee
This is now an equation inside $ U (\fd \lpp z\rpp\lbb t\rbb )\lbb\hbar\rbb\wh\otimes_{\Ch} U(\fd (\CO))\lbb\hbar\rbb$, where we expand $\frac{1}{t_1+z-t_2}$ as
\be\label{eq:t2large}
\frac{1}{t_1+z-t_2}= -\sum_{k\geq 0} \frac{(t_2-t_1)^k}{z^{k+1}}.
\ee
Both sides of equation \eqref{eq:dr=rrzlarge'} can be applied to $M\otimes N$, but this is not the same as equation \eqref{eq:dr=rrzlarge}. The RHS is identical, the problem is that on the LHS of \eqref{eq:dr=rrzlarge} we have the action of $ (d_{\mathbf{r}} \iota_{z>t})\otimes 1 (\mathbf{r}(t_1+z-t_2))$ whereas on the LHS of \eqref{eq:dr=rrzlarge'} we have the action of $( \iota_{z>t} d_{\mathbf{r}}\otimes 1) (\mathbf{r}(t_1+z-t_2))$. Therefore, we need to show
\be
 (\iota_{z>t}d_{\mathbf{r}})\otimes 1(\mathbf{r})= (d_{\mathbf{r}} \iota_{z>t})\otimes 1 (\mathbf{r}). 
\ee
To do so, we consider now expanding the $\mathbf{r}(t_1+z-t_2)$ in the region where $t_1<t_2-z$, and think of this as an element in $\fd (\CO)\wh\otimes \fd (r)$. From the classical 1-shifted YB equation \eqref{eq:1shiftedYB}, this $\mathbf{r}$ satisfy
\be
(d_{\mathbf{r}} \iota_{t_1<t_2-z} )\otimes 1 \mathbf{r}(t_1+z-t_2)= \iota_{t_1<t_2-z}\otimes 1[\mathbf{r}(t_1+z-t_2)^{12}, \mathbf{r}(t_1+z-t_2)^{13}].
\ee
Re-expanding this using $\iota_{z>t_2}$ precisely land us at $(d_{\mathbf{r}} \iota_{z>t})\otimes 1 (\mathbf{r})$. Namely, we have
\be
 (d_{\mathbf{r}} \iota_{z>t})\otimes 1 (\mathbf{r})= \iota_{z>t_2, t_1} [\mathbf{r}(t_1+z-t_2)^{12}, \mathbf{r}(t_1+z-t_2)^{13}].
\ee
On the other hand, it is clear that
\be
(\iota_{z>t}d_{\mathbf{r}})\otimes 1 \mathbf{r}=\iota_{z>t_1, t_2}[\mathbf{r}(t_1+z-t_2)^{12}, \mathbf{r}(t_1+z-t_2)^{13}],
\ee
we find that they are equal.  

The second part is proven in a similar way, by translating the equality
\be
\Delta d_{\mathbf{r}}=(d_{\mathbf{r}}\otimes 1+1\otimes d_{\mathbf{r}}-2\hbar [\mathbf{r}, -])\Delta
\ee
by $\tau_z$, and re-expanding $\mathbf{r}$ into the region where $z>t_1, t_2$. This can be done because $\Delta (\nabla\mathbf{r})$ is well defined as an endomorphism of $M\otimes N$. The fact that $\iota$ are Lie algebra homomorphisms guarantees that this equality is still true after re-expansion. 

Finally, to prove the third part, we first note that
\be
\Delta_{-z}^{op}=1\otimes \tau_{-z}\Delta=\tau_z\otimes 1\Delta (\tau_{-z})=\Delta_{z}\tau_{-z}.
\ee
Therefore, the swapping map $\sigma: U\otimes V\to V\otimes U$ induces an isomorphism of modules of $U(\fd (\CO))$
\be
\sigma: M_z\otimes N_0\cong \tau_{z} \lp N_{-z}\otimes M_0\rp. 
\ee
We therefore only need to show that $\sigma$ intertwines the differential, or equivalently,
\be
\sigma (\mathbf{r}(t_1+z-t_2))= \mathbf{r}(t_1-z+t_2).
\ee
This follows from applying $\tau_z$ to equation \eqref{eq:rsymmetrize}, as well as the fact that $\mathbf{r}$ is difference-dependent. 

\end{proof}

\begin{Rem}
    The commutativity condition of Proposition \ref{Prop:tensorz} will be called weak commutativity. 
\end{Rem}

From Proposition \ref{Prop:tensorz}, we can define a ``meromorphic" tensor product $M_z\otimes N_0$ for a pair of FSF modules of $U_\hbar (\fd (\CO))$. However, the resulting module is not FSF anymore. Nevertheless, there is still a natural generalization of associativity condition as follows. Consider now a triple of FSF modules $M=U\lbb\hbar\rbb, N=V\lbb\hbar\rbb$ and $P=W\lbb\hbar\rbb$, the tensor producr $M_{z+w}\otimes (N_w\otimes P_0)$ is naturally identified with the vector space
\be
U\otimes V\otimes W\lbb\hbar\rbb \lpp w\rpp \lpp z+w\rpp
\ee
where as $(M_z\otimes N)_w\otimes P_0$ is identified with
\be
U\otimes V\otimes W\lbb\hbar\rbb \lpp z\rpp \lpp w\rpp.
\ee
These two are not immediately comparable, but they both receive a map from the vector space
\be
U\otimes V\otimes W\lbb\hbar\rbb \lbb z, w\rbb [z^{-1}, w^{-1}, (z+w)^{-1}]
\ee
via formal expansion, just as in \cite[Chapter 3]{frenkel2004vertex}. We claim the following result, which is mostly clear from the definition.

\begin{Prop}\label{Prop:weaklyassoc}
 For a triple of FSF modules $M=U\lbb\hbar\rbb, N=V\lbb\hbar\rbb$ and $P=W\lbb\hbar\rbb$, the following statements are true. 
 
 \begin{itemize}
     \item The vector space
\be
U\otimes V\otimes W\lbb\hbar\rbb \lbb z, w\rbb [z^{-1}, w^{-1}, (z+w)^{-1}]
\ee
has a well-defined DG module structure of $U_\hbar (\fd (\CO))$, whose action is induced by $(1\otimes \Delta_w) \Delta_{z+w}=(\Delta_z\otimes 1)\Delta_w$, and whose differential is given by
\be
d_{\mathbf{r}}(z, w):= d_M+d_N+d_P-2\hbar \lp \mathbf{r}^{12}(t_1+z-t_2)+\mathbf{r}^{13}(t_1+z+w-t_2)+\mathbf{r}^{23}(t_1+w-t_2)\rp.
\ee
We denote this module by $M_{z+w}\otimes N_w\otimes P_0$, signifying its independence of order. 

\item Formal expansion as discussed above gives the following maps of DG modules of $U_\hbar (\fd (\CO))$, both of which are equivalences after appropriate ring extension:
\be
\btik
 &M_{z+w}\otimes N_w\otimes P_0\arrow[dr]\arrow[dl] & \\
(M_z\otimes N)_w\otimes P_0 & & M_{z+w}\otimes (N_w\otimes P_0)
\etik
\ee

 \end{itemize} 

\end{Prop}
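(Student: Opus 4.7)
The plan is to extend the argument of Proposition \ref{Prop:tensorz} to a three-fold iterated coproduct, using the equality $(1\otimes \Delta_w)\Delta_{z+w}=(\Delta_z\otimes 1)\Delta_w$ as the organizing identity, and then showing that the three natural formal expansions (the full one and the two nested ones) all come from the same abstract gadget, differing only in the choice of expansion region. First, I would check the coassociativity-type identity: since $\Delta$ is coassociative in $U_\hbar(\fd(\CO))$ and the translations $\tau_z, \tau_w, \tau_{z+w}$ all commute with $\Delta$ in the sense $\Delta\tau_z = (\tau_z\otimes\tau_z)\Delta = (\tau_z\otimes 1)\Delta\tau_z^{-1}\cdot$ (modulo bookkeeping), the two iterations $(\Delta_z\otimes 1)\Delta_w$ and $(1\otimes \Delta_w)\Delta_{z+w}$ both equal $(\tau_{z+w}\otimes \tau_w\otimes 1)\Delta^{(2)}$. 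The FSF hypothesis on $M,N,P$ bounds the pole orders of each $\mathbf{r}^{ij}$-term when acting on $U\otimes V\otimes W$, so the coproduct action and the multiplication by $\mathbf{r}(t_1+z-t_2)+\mathbf{r}(t_1+z+w-t_2)+\mathbf{r}(t_1+w-t_2)$ both extend to continuous operators on the ring-extended space.

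The main work is to check that $d_{\mathbf{r}}(z,w)^2=0$ and that $d_{\mathbf{r}}(z,w)$ commutes with the action via $(\tau_{z+w}\otimes\tau_w\otimes 1)\Delta^{(2)}$. Expanding the square, the diagonal contributions $(d_M+d_N+d_P-2\hbar\mathbf{r}^{ij})^2$ on each pair reduce, via the first identity of Theorem \ref{Thm:dr=r2} suitably translated and re-expanded in the region $|z|,|w|,|z+w|>|t_i|$, to zero by the same bootstrap used in Proposition \ref{Prop:tensorz} (the Lie algebra embeddings $\iota_{z>t}\circ\tau_{-z}$ etc.\ ensure $d_{\mathbf r}$ commutes with re-expansion). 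The off-diagonal contributions sum to
\be
[\mathbf{r}^{12}(t_1+z-t_2),\mathbf{r}^{13}(t_1+z+w-t_2)]+[\mathbf{r}^{12}(t_1+z-t_2),\mathbf{r}^{23}(t_1+w-t_2)]+[\mathbf{r}^{13}(t_1+z+w-t_2),\mathbf{r}^{23}(t_1+w-t_2)],
\ee
which vanishes by the 1-shifted classical Yang-Baxter equation \eqref{eq:1shiftedYB} for $\mathbf{r}$, translated by $(z+w,w,0)$ and re-expanded in the chosen region (this is legitimate because $\mathbf{r}$ is difference-dependent, so translation is harmless, and each pairwise commutator has a finite-order pole when evaluated on FSF modules). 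Compatibility between differential and action follows from the second identity of Theorem \ref{Thm:dr=r2} applied twice along the same translation-and-re-expansion argument; this is the analog of the second bullet of Proposition \ref{Prop:tensorz}.

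For the expansion maps, I would observe that both $(M_z\otimes N)_w\otimes P_0$ and $M_{z+w}\otimes(N_w\otimes P_0)$ arise from the same underlying vector space $U\otimes V\otimes W\lbb\hbar\rbb$ equipped with the same formal coproduct action and the same formal differential, only the expansion locus differs: in the first, one expands $(t_1+z-t_2)^{-1}$ and $(t_1+z+w-t_2)^{-1}$ with $z$ large, then $w$ large; in the second, with $z+w$ large, then $w$ large; in the middle object, one works rationally in $z,w,z+w$ before any expansion. The formal expansion homomorphisms $\C\lbb z,w\rbb[z^{-1},w^{-1},(z+w)^{-1}]\to\C\lpp z\rpp\lpp w\rpp$ and $\to\C\lpp w\rpp\lpp z+w\rpp$ therefore lift tautologically to maps of DG $U_\hbar(\fd(\CO))$-modules, and become isomorphisms upon tensoring the source up to the corresponding iterated Laurent ring. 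The single genuine obstacle, and the one I would write out in detail, is the double re-expansion needed to derive $d_{\mathbf{r}}(z,w)^2=0$ from \eqref{eq:1shiftedYB}; everything else is bookkeeping around Theorem \ref{Thm:dr=r2} and the coassociativity of $\Delta$.
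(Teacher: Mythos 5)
Your proposal is correct and follows exactly the route the paper intends: the paper omits the proof, declaring the statement ``mostly clear from the definition,'' and your write-up supplies precisely the missing details --- the identification of both iterated coproducts with $(\tau_{z+w}\otimes\tau_w\otimes 1)\Delta^{(2)}$, the reduction of the pairwise terms in $d_{\mathbf{r}}(z,w)^2$ to the two-fold case of Proposition \ref{Prop:tensorz}, the vanishing of the cross-terms via the classical Yang--Baxter equation \eqref{eq:1shiftedYB} translated by $(z+w,w,0)$ and re-expanded, and the tautological lift of the formal expansion homomorphisms to maps of DG modules. You correctly isolate the double re-expansion as the only genuinely delicate point, which is the same bootstrap already carried out in the proof of Proposition \ref{Prop:tensorz}.
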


\begin{Rem}
   The associativity condition of Proposition \ref{Prop:weaklyassoc} will be called weak associativity. 
\end{Rem}

One can think of $M_z\otimes N_w\cong \tau_w(M_{z-w}\otimes N_0)$ as a sheaf of modules of $U_\hbar (\fd (\CO))$ over $\mathbb D^2$ away from the diagonal, where $z, w$ labels the insertion point. Weak commutativity guarantees that this sheaf only depends on the modules $M, N$ and their insertion points (not the order in which to form the tensor product). Moreover, the associativity condition is naturally an isomorphism between sheaves on $\mathbb D^3$ away from all the diagonals. 

In general, for a set of FSF modules $\{M^i\}$ of $U_\hbar (\fd (\CO))$ and formal variables $\{z_i\}$, one can define
\be
\bigotimes \{M^i, z_i\}:= M^1_{z_1}\otimes \cdots \otimes M^n_{z_n}=\prod V^i \lbb\hbar\rbb\lbb z_i\rbb [(z_i-z_j)^{-1}].
\ee
where $M^i=V^i\lbb\hbar\rbb$, whose differential is
\be
d_{\mathbf{r}}(z)=\sum_i d_{M_i}-2\hbar \sum_{i<j} \mathbf{r}^{ij}(t_i+z_i-z_j-t_j),
\ee
and whose action by $U_\hbar (\fd (\CO))$ is given by $\prod \tau_{z_i} \Delta^n$. This is a coherent sheaf over a product of disks $\mathbb D^n$ away from all the diagonals. It is in this sense that we can take tensor products of modules of $U_\hbar (\fd (\CO))$. 

\begin{Rem}
    The proof of Proposition \ref{Prop:tensorz} in fact shows that the re-expansion map:
    \be
\fd (\CK)\to \fd (\CO)\lbb z^{\pm}\rbb
    \ee
induces a functor
    \be
    U_\hbar (\fd (\CO))\Mod^{\rm FSF}\to U_\hbar (\fd (\CK))\Mod,
    \ee
    mapping $M$ to $M\lpp z\rpp$, whose differential is unchanged and whose action of $\fd (\CK)$ is given by the above re-expansion map. The tensor product $M_z\otimes N_0$ is nothing but the action of $M\lpp z\rpp$ on $N$, \textit{cf}. Theorem \ref{Thm:QDGAL}. There is a slight problem with this since $M\lpp z\rpp$ is not smooth. However, it is equal to the limit of a projective system of smooth modules. If one considers $M\lpp z\rpp$ in this sense, then one can think of the assignment $M\mapsto M\lpp z\rpp$ as the categorified state-operator correspondence. In this vein, one can view $U_\hbar (\fd (\CO))\Mod^{\rm FSF}$ as a categorified vacuum vertex algebra and the monoidal category $U_\hbar (\fd (\CK))\Mod$ as the categorification of the universal enveloping algebra of $U_\hbar (\fd (\CO))\Mod^{\rm FSF}$.

\end{Rem}

\subsection{Generalizing to affine Kac-Moody algebra}\label{subsec:affineKacr}

Let us now assume that the solution $r$ is skew-symmetric, so that $\fg (r)=\fg (r)^\bot$. In this case, we show that one can include a level in the algebra $U_\hbar (\fd (\CO))$. Consider a differential in $U_\hbar (\fd (\CO))$ given by
\be
d_k x_{a, n}= -\hbar n k (\epsilon x_{a, n-1}),
\ee
or, in other words, $d_k= \hbar k\epsilon T$. It is easy to show that this defines a differential. The proof of the following proposition will reveal the relation of $d_k$ to the affine Kac-Moody level.

\begin{Prop}
    $d_k^2=0$ and $(d_{\mathbf{r}}+d_k)^2=0$. We denote by $U_\hbar^k (\fd (\CO))$ the DG algebra $U (\fd (\CO))\lbb\hbar\rbb$ with differential $d_{\mathbf{r}}^k:=d_{\mathbf{r}}+d_k$. 
    
\end{Prop}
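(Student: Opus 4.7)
My plan is to reduce the statement to checking two things on generators: that $d_k$ squares to zero and that $d_k$ anticommutes with $d_{\mathbf{r}}$. Since $d_{\mathbf{r}}^2=0$ follows from the earlier discussion (the assumption $\fd(\CK)_2=0$ forces $c_{\mathbf{r}}=0$), the second identity $(d_{\mathbf{r}}+d_k)^2=0$ will be equivalent to the anticommutation $\{d_{\mathbf{r}}, d_k\}=0$.

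First I would verify that $d_k$ is well-defined as a graded derivation of degree $1$ on $U(\fd(\CO))\lbb\hbar\rbb$. Since it is prescribed on the Lie algebra generators $x_{a,n}$ and $\epsilon x_{a,n}$ (where it sends the former to $-\hbar n k\, \epsilon x_{a,n-1}$ and kills the latter), extension by Leibniz to $U(\fd(\CO))$ descends to a derivation iff $d_k$ is a cocycle of $\fd(\CO)$ with values in itself, i.e.\ respects the bracket. The nontrivial check is
\be
d_k[x_{a,m},x_{b,n}] = [d_k x_{a,m}, x_{b,n}] + [x_{a,m}, d_k x_{b,n}],
\ee
and both sides equal $-\hbar(m+n)k\, f_{ab}^c \epsilon x_{c,m+n-1}$ using $[x_{a,m}, \epsilon x_{b,n}] = f_{ab}^c\epsilon x_{c,m+n}$; the mixed and purely fermionic relations are preserved trivially because $d_k$ vanishes on $\epsilon\fg[t]$. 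Once $d_k$ is a derivation, $d_k^2$ is itself a derivation, and on generators it vanishes immediately because $d_k^2 x_{a,n}\in d_k(\epsilon\fg[t]) = 0$, hence $d_k^2=0$.

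For $\{d_k, d_{\mathbf{r}}\}=0$ I would again reduce to generators. The action of $d_{\mathbf{r}}=[-\hbar\nabla\mathbf{r},-]$ on $x_{a,n}$ and $\epsilon x_{a,n}$ can be read off from the explicit form
\be
\mathbf{r}(t_1-t_2)=(1\otimes\epsilon)r(t_1-t_2)+(\epsilon\otimes 1)\sigma\bigl(r(t_2-t_1)\bigr),
\ee
and each summand produces two kinds of contributions: a bracket in $\fg$ acting on one factor together with an $\epsilon$ from the other. Computing $d_{\mathbf{r}}d_k + d_k d_{\mathbf{r}}$ on $x_{a,n}$, the terms come with explicit powers of $n$ from both the $d_k$ shift and the derivative $\partial_t$ implicit in the expansion of $r(t_1-t_2)$. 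The crucial input is skew-symmetry $r(t_1-t_2) = -\sigma r(t_2-t_1)$: it guarantees that after $d_k$ converts one bosonic leg into $\epsilon$, the two pieces of $\mathbf{r}$ symmetrize/antisymmetrize correctly so the total derivative produces a boundary term that cancels the terms coming from $d_k d_{\mathbf{r}}$. The identity on the purely fermionic generators $\epsilon x_{a,n}$ holds for degree reasons, since $d_{\mathbf{r}}(\epsilon x_{a,n})\in U(\epsilon\fg[t])$, which is annihilated by $d_k$, while $d_k(\epsilon x_{a,n})=0$ already.

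The main obstacle is the calculation in the previous paragraph: tracking the interaction of the $\partial_t$-action hidden inside $d_k = \hbar k\epsilon T$ with the poles in $r(t_1-t_2)$, and recognizing the resulting cocycle as exactly the affine Kac-Moody $2$-cocycle $k\cdot\mathrm{res}_{t=0}\beta(x,\partial_t y)$ applied in the $\epsilon$-direction. Once this identification is made, the vanishing of $\{d_{\mathbf{r}}, d_k\}$ becomes equivalent to the statement that this cocycle is indeed closed under the Chevalley–Eilenberg differential of $\fd(\CO)$, which is manifest since it is the standard Kac-Moody cocycle paired trivially in the $\epsilon$-direction. This is precisely where the skew-symmetry hypothesis on $r$ is used.
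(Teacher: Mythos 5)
Your first half is fine and is essentially the ``direct computation'' the paper alludes to: $\epsilon T$ is a degree-$1$ Lie-algebra derivation of $\fd(\CO)$, so $d_k$ extends to $U(\fd(\CO))\lbb\hbar\rbb$, and $d_k^2=0$ because $d_k$ kills $\epsilon\fg(\CO)$. The gap is in the second half, which is the actual content of the proposition. You reduce $\{d_{\mathbf r},d_k\}=0$ to the claim that the Kac--Moody $2$-cocycle is closed under the Chevalley--Eilenberg differential of $\fd(\CO)$. That is the wrong cocycle condition: closedness of $\mathrm{res}_{t=0}\beta(x,\partial_t y)$ as a cocycle of $\fd(\CO)$ is exactly the statement that $\delta_k=k\,\epsilon\circ T$ is a derivation of the bracket of $\fd(\CO)$ --- i.e.\ it re-proves your first half --- and it is a statement about $\fd(\CO)$ alone, independent of $r$. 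But $d_{\mathbf r}$ encodes the cobracket $\delta_{\mathbf r}$, which is dual to the Lie bracket of the complementary Lagrangian $\fd(r)$, so any correct criterion for $\{d_{\mathbf r},d_k\}=0$ must see $r$; your own (correct) insistence that skew-symmetry is the crucial input contradicts the reduction you propose. So the key identity is asserted rather than proved, and the conceptual substitute offered for the computation does not establish it. You also never invoke difference-dependence of $r$, which is indispensable here.

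What actually closes the argument, along your direct route, is this: since $d_{\mathbf r}=[-\hbar\nabla\mathbf r,-]$ is inner and $d_k$ is a derivation, one gets $\{d_k,d_{\mathbf r}\}=[-\hbar\,\nabla(\delta_k\otimes 1+1\otimes\delta_k)\mathbf r,\,-]$, and then
\be
(\delta_k\otimes 1+1\otimes\delta_k)\,\mathbf r(t_1-t_2)=k\,(\partial_{t_1}+\partial_{t_2})\bigl(\epsilon\otimes\epsilon\; r(t_1-t_2)\bigr)=0,
\ee
where skew-symmetry $\sigma r(t_2-t_1)=-r(t_1-t_2)$ is used to combine the two halves of $\mathbf r$ into $\epsilon\otimes\epsilon\,r$, and difference-dependence kills $(\partial_{t_1}+\partial_{t_2})r$. (The paper performs exactly this computation in the proof of the subsequent proposition.) The paper's own proof of the present statement avoids computation altogether: when $r$ is skew-symmetric, $\fg(r)$ lifts to a Lagrangian complement of $\fg(\CO)\oplus\C K$ inside the affine Kac--Moody algebra $\wh\fg(\CK)$, so $d_{\mathbf r}+d_K$ is the canonical differential of the $1$-shifted Manin triple built on $T^*[-1]\wh\fg(\CK)$; it squares to zero by the general theory under the assumption $\fg_2=0$, and one specializes $K=k$. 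Either route works, but your write-up currently contains neither the computation nor a correct structural replacement for it.
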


\begin{proof}
    One can check this by direct computation. However, we go through a different route, which reveals the connection between $d_k$ and the affine Kac-Moody level. Consider the affine Kac-Moody algebra $\wh \fg (\CK)$, which, as a vector space, is $\fg (\CK)\oplus \C K$. When $r$ is skew-symmetric, the Lie subalgebra $\fg (r)$ of $\fg (\CK)$ is, in fact, a Lie subalgebra of $\wh \fg (\CK)$ that is complementary to $\fg (\CO)\oplus \C K$. Let us consider the corresponding DG algebra obtained from $N^*[-1] \fg (\CO)\oplus \C K$ and the Manin-triple defined by $\fg (r)$. The underlying algebra is nothing but the algebra $U (\fd (\CO))\otimes \C[K]\lbb\hbar\rbb$. To understand the differential, we must understand the Lie algebra structure of $N^*[-1]\fg (r)$, which, as a vector space, is equal to
    \be
\fg (r)\oplus \epsilon \fg (r)\oplus \C \epsilon K^*. 
    \ee
    The subspace $\fg (r)\oplus \epsilon\fg (r)$ is a Lie subalgebra whose Lie bracket is precisely that of $N^*[-1]\fg (r)$ computed inside $T^*[-1]\fg (\CK)$. There is a non-trivial bracket between $\epsilon K^*$ and $\fg (r)$, due to the commutation relation
    \be
[x_{a, n}, x_{b, -n}]= n \kappa_{ab} K.
    \ee
This induces the following commutation relation in $N^*[-1]\fg (r)$:
\be
[r_{n, i}, \epsilon K^*] =(n+1) \epsilon r_{n+1, i}.
\ee
 A moment of reflection reveals that the cobracket induced on $U (\fd (\CO))\otimes \C[K]\lbb\hbar\rbb$ by this bracket is simply:
    \be
\delta_K b_it^n=  n K (\epsilon b_{i}t^{n-1}).
    \ee
Consequently, the full differential on $U (\fd (\CO))\otimes \C[K]\lbb\hbar\rbb$ is $d_{\mathbf{r}}+d_K$ where $d_K=-\hbar\delta_K$. We conclude the proof of the proposition by taking a quotient of this DG algebra by the ideal generated by $K-k$. 

\end{proof}

We now show that Proposition \ref{Prop:tensorz} and \ref{Prop:weaklyassoc} has a natural generalization to $U_\hbar^k (\fd (\CO))$.

\begin{Prop}
    For any two FSF modules $M, N$ of $U_\hbar^k (\fd (\CO))$, the following differential turns $M\Chtensor N\lpp z\rpp$ into a DG module of $U_\hbar^k (\fd (\CO))$ under the coproduct $\Delta_z$. 
    \be
d_{\mathbf{r}}(z):= d_M\otimes 1+1\otimes d_N-2\hbar \mathbf{r}(t_1+z-t_2)\cdot-.
    \ee
    Moreover, weak commutativity and associativity holds, in a fashion similar to  Proposition \ref{Prop:tensorz} and \ref{Prop:weaklyassoc}.

\end{Prop}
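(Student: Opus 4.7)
The plan is to mirror the proofs of Propositions~\ref{Prop:tensorz} and~\ref{Prop:weaklyassoc}, tracking the additional level differential $d_k = \hbar k\epsilon T$ at each step. Since the underlying algebra, the coproduct $\Delta_z$, and the translation $\tau_z$ are unchanged from the $k=0$ case, the structural aspects of those proofs transfer immediately. The only new content is to check that $d_k$ is compatible with the coproduct $\Delta_z$ and with the meromorphic $r$-matrix $\mathbf{r}(t_1+z-t_2)$.

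First I would verify the derivation compatibility $\Delta_z d_k = (d_k\otimes 1 + 1\otimes d_k)\Delta_z$, which is immediate since $d_k$ is a derivation built from $T = \partial_t$ (commuting with $\Delta$ and with $\tau_z = e^{zT}$) postcomposed with the $\epsilon$-shift on generators, which is primitive. Combined with the analog of Theorem~\ref{Thm:dr=r2} for $d_{\mathbf{r}}$, this gives the $\Delta_z$-compatibility of the full module differential $d_{\mathbf{r}}^k$.

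The crucial new identity needed to ensure $d_{\mathbf{r}}(z)^2 = 0$ is
\[
(d_k\otimes 1 + 1\otimes d_k)(\mathbf{r}(t_1+z-t_2)) = 0,
\]
which combined with $(d_{\mathbf{r}}\otimes 1 + 1\otimes d_{\mathbf{r}})(\mathbf{r}) = \hbar[\mathbf{r},\mathbf{r}]$ from Proposition~\ref{Prop:tensorz} yields square-zero of the total differential. To prove it, I would use the expansion of $\mathbf{r}$ from equation~\eqref{eq:skewrr} together with the explicit rule that $d_k$ acts as $\hbar k\epsilon\partial_t$ on the $\fg(\CO)$-generators $b_i t^n$ and vanishes on the $\fg^*(\CO)[-1]$-generators $\epsilon b_i t^n$. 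The contribution of $d_k^{(1)}$ is then proportional to $\epsilon\partial_{t_1} r_{n,i}(t_1+z) \otimes \epsilon b_i t_2^n$ and that of $d_k^{(2)}$ to $(n+1)\epsilon\wt{r}_{n+1,i}(t_1+z) \otimes \epsilon b_i t_2^n$ (after a shift of index). These cancel upon invoking the identity
\[
\partial_{t} r_{n,i}(t) = -(n+1) r_{n+1,i}(t) = (n+1)\wt{r}_{n+1,i}(t),
\]
where the first equality follows from difference-dependence $(\partial_{t_1} + \partial_{t_2})r = 0$ and the second from the skew-symmetry relation $\wt{r}_{k,i} = -r_{k,i}$.

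With these ingredients in hand, weak commutativity and weak associativity follow exactly as in Propositions~\ref{Prop:tensorz} and~\ref{Prop:weaklyassoc}: they depend only on the behavior of $\Delta_z$ and $\mathbf{r}(t_1+z-t_2)$ under $\tau_z$ and under re-expansion in different regions, both of which are controlled by the classical 1-shifted Yang-Baxter equation and are unaffected by $d_k$. The main obstacle in the whole proof is the cancellation identity $(d_k^{(1)} + d_k^{(2)})(\mathbf{r}) = 0$, which fails without skew-symmetry of $r$; this is precisely the reason the level deformation $U_\hbar^k(\fd(\CO))$ is only available in the skew-symmetric case.
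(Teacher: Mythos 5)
Your proposal is correct and follows essentially the same route as the paper: reduce everything to the compatibility of $d_k$ with $\Delta_z$ and the vanishing of $(d_k\otimes 1+1\otimes d_k)(\mathbf{r}(t_1+z-t_2))$, the latter being exactly where skew-symmetry and difference-dependence of $r$ enter. Your basis-level cancellation via $\partial_t r_{n,i}=-(n+1)r_{n+1,i}=(n+1)\wt r_{n+1,i}$ is just the expanded form of the paper's one-line identity $(\delta_k\otimes 1+1\otimes\delta_k)(\mathbf{r})=\hbar k(\partial_{t_1}+\partial_{t_2})(\epsilon\otimes\epsilon\, r)=0$.
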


\begin{proof}
    The only nontrivial statements to check are the following equations
    \be
d_{\mathbf{r}}(z)^2=0,\qquad \Delta_z d_{\mathbf{r}}^k=d_{\mathbf{r}}(z) \Delta_z.
    \ee
    These boils down to the following equations
    \be
(\delta_k\otimes 1+1\otimes \delta_k) (\mathbf{r}(t_1+z-t_2))=0,\qquad (\delta_k\otimes 1+1\otimes \delta_k)\Delta_z=\Delta_z\delta_k.
    \ee
    The first equation above is because
    \be
(\delta_k\otimes 1+1\otimes \delta_k) (\mathbf{r}(t_1+z-t_2))= \hbar k (\pd_{t_1}+\pd_{t_2})(\epsilon\otimes \epsilon r (t_1+z-t_2))=0,
    \ee
    where we used the skew-symmetry and difference-dependence of $r$. The second equation is easily checked on generators. The proof of weak commutativity and associativity are evident from Proposition \ref{Prop:tensorz} and \ref{Prop:weaklyassoc}. 

\end{proof}

Similar to the case of $U_\hbar (\fd (\CO))$, we can now define
\be
\bigotimes \{M^i, z_i\}:= M^1_{z_1}\otimes \cdots \otimes M^n_{z_n}=\prod V^i \lbb\hbar\rbb\lbb z_i\rbb [(z_i-z_j)^{-1}].
\ee
for a set $\{M^i\}$ of FSF modules of $U_\hbar^k (\fd (\CO))$ and formal variables $z_i$. This is a coherent sheaf over $\mathbb D^n$ away from all the diagonals.

\subsection{Summary and some specializations}\label{subsec:summarymero}

Let us summarize what we have done so far. For a simple Lie algebra $\fg$, and each difference-dependent solution $r=r(t_1-t_2)$ of the classical generalized Yang-Baxter equation \eqref{eq:CGYBE}, we constructed a DG algebra $U_\hbar (\fd (\CO))$, and moreover show that it has the following property. For any set $\{M_i\}$ of FSF modules and a set of formal variables $\{z_i\}$, we can construct a sheaf over $\mathbb D^n$ away from all diagonals, of the form
\be
\bigotimes \{M^i, z_i\}:= M^1_{z_1}\otimes \cdots \otimes M^n_{z_n}=\prod V^i \lbb\hbar\rbb\lbb z_i\rbb [(z_i-z_j)^{-1}],
\ee
where $M^i=V^i \lbb\hbar\rbb$. This admits a natural differential
\be
d_{\mathbf{r}}(z)=\sum_i d_{M_i}-2\hbar \sum_{i<j} \mathbf{r}^{ij}(t_i+z_i-z_j-t_j),
\ee
and the structure of a $U_\hbar (\fd (\CO))$ module, given by the $\tau_{z_i}$-shift of the symmetric co-product. This tensor product is weakly commutative and associative. 

When $r=r(t_1-t_2)$ is moreover skew-symmetric, and satisfies the ordinary Yang-Baxter equation \eqref{eq:CYBE}, we show that the algebra $U_\hbar (\fd (\CO))$ can be further deformed by a differential to $U_\hbar^k (\fd (\CO))$, such that the above statement still holds for $U_\hbar^k (\fd (\CO))$. This $k$ is closely related to the affine Kac-Moody level. 

We now comment on two specializations. First, if we consider $r(t_1-t_2)$ that are moreover rational, in the sense that
\be
r(t_1-t_2)= \frac{\Omega}{t_1-t_2}+ g(t_1-t_2), \qquad g(t_1-t_2)\in \fg [t_1]\otimes \fg[t_2],
\ee
then it is not difficult to show that the above sheaf $\bigotimes \{M^i, z_i\}$ can be extended to a sheaf over $\C^n$ away from all diagonals. This is simply because in this case $d_{\mathbf{r}}(z)$ are all rational functions with poles at $z_i-z_j$.  One can in particular take the stalk at any $\{s_i\}$ where $s_i\ne s_j$ for $i\ne j$. Unfortunately this stalk itself is not a module of $U_\hbar (\fd (\CO))$. It is, however, a module of $U_\hbar (\fd [t])$  (which is indeed a DG subalgebra of $U_\hbar (\fd (\CO))$, thanks to rationality of $g(t_1-t_2)$). Therefore, in the case of rational $r$ matrices, starting with finite-dimensional modules of $U_\hbar (\fd [t])$, we can evaluate their tensor product over the configuration space of distinct points over $\C$. The same is true for $U_\hbar^k (\fd [t])$ when $r$ is moreover skew-symmetric. Namely, the category of finite-dimensional modules of $U_\hbar^k (\fd [t])$ has the structure of a meromorphic tensor category of \cite{soibelmanmero}.  In fact, we expect that in all cases considered in this section, the sheaf $\bigotimes \{M^i, z_i\}$ can be extended to the curve corresponding to the solution $r$ of the classical Yang-Baxter equation.   

We consider the second specialization. In this section, we have restricted our considerations to a simple Lie algebra $\fg$, because we would like to use well-known results about classical $r$ matrices. For any Lie algebra $\fg$, we can always form the Yang's $r$-matrix as an element in $\fd:= T^*[-1]\fg$, of the form
\be
\mathbf{r}(t_1-t_2)=\frac{I_a\otimes  I^a}{t_1-t_2}=\frac{\Omega}{t_1-t_2}\in t_1^{-1}\fd [t_1^{-1}]\otimes \fd \lbb t_2\rbb,\qquad  \fd=T^*[-1]\fg.
\ee
In this $I_a$ is a basis for $\fd$ and $I^a$ the dual basis under the 1-shifted pairing, and $\Omega$ is the quadratic Casimir of the 1-shifted pairing. This $\mathbf{r}$ is always rational, difference-dependent and skew-symmetric (in the sense that this splitting works for the affine Kac-Moody algebra as well, if there is an ordinary symmetric pairing on $\fd$ to define it). Due to the resemblance of this $\mathbf{r}$ with the ordinary Yang's $r$-matrix, we feel that it is right and just to call the associated DG algebra the \textbf{DG 1-shifted Yangian} of $\fd$, and denote it by ${}_1\!Y_\hbar^k (\fd)$. We can construct coherent sheaves over the configuration space of points on $\C$ from a collection of FSF modules of ${}_1\!Y_\hbar^k (\fd)$, much like the case of ordinary Yangians. 

\newpage

\bibliographystyle{alpha}

\bibliography{SQ}

\end{document}